\documentclass[reqno, 11pt]{amsart}
\usepackage{url}
\usepackage[hidelinks]{hyperref}
\usepackage{cleveref}
\usepackage{helvet}
\usepackage{ stmaryrd }
\usepackage{color,soul}
\usepackage{amsthm, thmtools}
\usepackage[style=alphabetic]{biblatex}
\addbibresource{refs.bib}
\usepackage{caption} 
\captionsetup[table]{skip=10pt}
\usepackage{float}

\newtheorem{mainthm}{Theorem}

\newtheorem{mainth}{Theorem}

\newtheorem{theorem}{Theorem}[section]
\newtheorem{lemma}[theorem]{Lemma}
\newtheorem{prop}[theorem]{Proposition}

\newtheorem{con}[theorem]{Conjecture}

\newtheorem{remark}[theorem]{Remark}

\theoremstyle{definition}
\newtheorem{deff}[theorem]{Definition}

\newtheoremstyle{break}
  {\topsep}{\topsep}%
  {\itshape}{}%
  {\bfseries}{}%
  {\newline}{}%
\theoremstyle{break}


\numberwithin{equation}{section}

\usepackage{bbm}
\usepackage{euscript}
\usepackage{pb-diagram}
\usepackage{amsmath}

\usepackage{amsxtra}
\usepackage{amssymb}
\usepackage{pifont}
\usepackage{amsbsy}

\usepackage{graphicx}
\usepackage{epstopdf}
\usepackage{colortbl}
\usepackage{multirow}
\usepackage{hhline}

\oddsidemargin 30pt
\evensidemargin .0in
\marginparsep 0pt
\topmargin 0.2pt
\marginparwidth 0pt
\baselineskip 14pt
\textwidth 6in
\textheight 8.5in

\newskip\aline \newskip\halfaline
\aline=12pt plus 1pt minus 1pt \halfaline=6pt plus 1pt minus 1pt





\newcommand\bE{{\mathbb E}}

\newcommand{\bN}{{{\mathbb N}}}

\newcommand\bR{{\mathbb R}}
\newcommand\bS{{\mathbb S}}

\newcommand\bZ{{\mathbb Z}}





\DeclareMathOperator{\diam}{diam}

\DeclareMathOperator{\vol}{vol}




\newcommand{\eps}{{\epsilon}}



\def\inpr{\mathbin{\hbox to 6pt{\vrule height0.4pt width5pt depth0pt \kern-.4pt \vrule height6pt width0.4pt depth0pt\hss}}}




\def\XXint#1#2#3{{\setbox0=\hbox{$#1{#2#3}{\int}$ }
\vcenter{\hbox{$#2#3$ }}\kern-.59\wd0}}

\title{Examples for Scalar Sphere Stability}

\author{Paul Sweeney Jr}
\address{Department of Mathematics, Stony Brook University, Stony Brook, NY 11794, USA}
\email{paul.sweeney@stonybrook.edu}

\begin{document}
\maketitle

\begin{abstract}
The rigidity theorems of Llarull and Marques-Neves, which show two different ways scalar curvature can characterize the sphere, have associated stability conjectures. Here we produce the first examples related to these stability conjectures. The first set of examples demonstrates the necessity of including a condition on the minimum area of all minimal surfaces to prevent bubbling along the sequence. The second set of examples constructs sequences that do not converge in the Gromov-Hausdorff sense but do converge in the volume preserving intrinsic flat sense.  In order to construct such sequences, we improve the Gromov-Lawson tunnel construction so that one can attach wells and tunnels to a manifold with scalar curvature bounded below and only decrease the scalar curvature by an arbitrarily small amount. Moreover, we are able to generalize both the sewing construction of Basilio, Dodziuk, and Sormani, and the construction due to Basilio, Kazaras, and Sormani of an intrinsic flat limit with no geodesics.
\end{abstract}

\section{Introduction}

Rigidity theorems are often used to characterize manifolds in Riemannian geometry. A typical rigidity theorem says that if a Riemannian manifold satisfies some conditions, usually including a bound on curvature, then it must be isometric to a specific model geometry. One can naturally formulate a stability theorem from a rigidity theorem. A stability theorem says if the hypotheses of a rigidity theorem are perturbed, then the manifolds that satisfy these hypotheses are quantitatively close to the manifold characterized by the rigidity theorem. In this paper, we are concerned with stability theorems which are associated with rigidity theorems that characterize the sphere using a curvature bound on the scalar curvature.

The rigidity theorem of  Llarull \cite{Ll} and the rigidity theorem due to Marques and Neves \cite{MV} are two results that show how scalar curvature can characterize the sphere. These two rigidity theorems naturally give rise to stability conjectures. Below, we construct the first examples related to these stability conjectures. We demonstrate why a condition preventing bubbling is required, and we investigate different modes of convergence. In order to construct these examples, we prove an enhancement of the Gromov-Lawson tunnel construction \cite{GL} (see also Schoen-Yau \cite{SY}) which retains control over the scalar curvature.

First, let us recall Llarull's theorem \cite{Ll} which says that if there is a degree non-zero, smooth, distance non-increasing map from a closed, smooth, connected, Riemannian, spin, $n$-manifold, $M^n$, to the standard unit round $n$-sphere and the scalar curvature of $M^n$ is greater than or equal to $n(n-1)$, then the map is a Riemannian isometry. Gromov in \cite{Gr} proposed studying the stability question related to Llarull's rigidity theorem by investigating sequences of Riemannian manifolds $M^n_j=(M^n_j,g_j)$ with $\inf R^j\to n(n-1)$ and $Rad_{\bS^n}(M_j)\to 1$. $Rad_{\bS^n}(M^n)$ is the maximal radius $r$ of the $n$-sphere, $\bS^n(r)$, such that $M^n$ admits a distance non-increasing map from $M^n$ to $\bS^n(r)$ of non-zero degree. Based on this, Sormani \cite{S4} proposed the following stability conjecture. Before stating the conjecture, we recall the following definition:
\[
\mathrm{MinA}(M,g)=\inf \{|\Sigma|_g : \Sigma \text{ is a closed minimal hypersurface in } M\}.
\]
Also, throughout this paper we will condense notation and set $M_j^n=(M_j^n,g_j)$ when we have a sequence of Riemannian manifolds.

\begin{con}\label{conjLL}
Suppose $M^n_j=(M^n_j,g_j)$, $n\geq 3$, are closed smooth connected spin Riemannian manifolds such that
\[
R^j\geq n(n-1)-\frac{1}{j}, \text{ }\mathrm{MinA}(M^n_j)\geq A, \text{ }\diam\left( M^n_j\right) \leq D, \text{ } \vol\left( M^n_j\right) \leq V 
\]
where $R^j$ is the scalar curvature of $M^n_j$.
Furthermore, suppose there are smooth maps to the standard unit round $n$-sphere
\[
f_j:M^n_j\to \bS^n
\]
which are 1-Lipschitz and $\deg{f_j}\neq 0$.
Then $M^n_j$ converges in the $\mathcal{VF}$-sense to the standard unit round $n$-sphere.
\end{con}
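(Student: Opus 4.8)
\medskip
\noindent\textit{A strategy toward Conjecture~\ref{conjLL}.}
The plan is to prove the conjecture by combining a precompactness theorem in the volume-preserving intrinsic flat topology with a limiting (``synthetic'') form of Llarull's rigidity theorem. Observe first that the existence of a $1$-Lipschitz map $f_j\colon M_j^n\to\bS^n$ of nonzero degree already forces $\vol(M_j^n)\ge|\deg f_j|\,\vol(\bS^n)\ge\vol(\bS^n)$, since $1$-Lipschitz maps do not increase volume; so the real content is to show that this inequality is asymptotically sharp and that the underlying spaces converge. The argument breaks into four stages: (i) extract an intrinsic flat subsequential limit; (ii) upgrade $\mathcal{F}$-convergence to $\mathcal{VF}$-convergence using the $\mathrm{MinA}$ bound; (iii) pass the maps $f_j$ and a weak scalar lower bound to the limit and invoke rigidity there; (iv) remove the passage to a subsequence.

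For stages (i) and (ii), the bounds $\diam(M^n_j)\le D$ and $\vol(M^n_j)\le V$ alone do not yield intrinsic flat precompactness, so the essential role of $\mathrm{MinA}(M^n_j)\ge A$ is to rule out the thin necks and bubbles that drive the degenerate examples constructed in this paper and in the sewing construction of Basilio--Dodziuk--Sormani. Concretely, a pinching region, or a short geodesic loop bounding a small-area disk, would --- after a minimization or $\mu$-bubble argument across the neck --- produce a closed minimal hypersurface of area less than $A$, contradicting the hypothesis; quantifying this should give a uniform bound on the filling volumes of the $M^n_j$ and hence, after passing to a subsequence, convergence to an integral current space $(X,d,T)$ with $\mathbf{M}(T)\le V$. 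The same absence of necks, together with the lower volume bound above, should prevent collapse and mass cancellation, so that $X$ is $n$-dimensional and $\mathbf{M}(T)=\lim_j\vol(M^n_j)$; this is precisely $\mathcal{VF}$-convergence to $X$.

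Stage (iii) is the heart of the matter. Because each $f_j$ is $1$-Lipschitz and the $M^n_j$ embed isometrically into a common metric space realizing the intrinsic flat convergence, an Arzela--Ascoli argument produces a $1$-Lipschitz limit map $f_\infty\colon X\to\bS^n$, and since the pushed-forward fundamental classes $(f_j)_\#\llbracket M^n_j\rrbracket$ converge while $\deg f_j\in\bZ\setminus\{0\}$, the limit map $f_\infty$ retains nonzero degree in the current-theoretic sense. The remaining point is to endow $X$ with scalar curvature $\ge n(n-1)$ in a sense strong enough to run Llarull's argument; the natural route is a \emph{quantitative} version of Llarull's spinorial proof, rerunning the twisted Lichnerowicz--Weitzenb\"ock identity on $M^n_j$ with the spinor bundle twisted by the $f_j$-pullback of the spinor bundle of $\bS^n$, and extracting from $R^j\ge n(n-1)-\tfrac{1}{j}$ together with $\deg f_j\ne 0$ an $L^2$-small bound, with constants depending only on $n,A,D,V$, on the relevant twisted harmonic spinor and on the defect in the distance-non-increasing condition. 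Such estimates should force $f_j^*g_{\bS^n}$ to be $L^2$-close to $g_j$ and $f_j$ to be an asymptotic Riemannian isometry, which passes to the limit to make $f_\infty$ an isometry, whence $X\cong\bS^n$ and $\lim_j\vol(M^n_j)=\vol(\bS^n)$. Stage (iv) is then routine: every subsequence of $(M^n_j)$ has a further subsequence $\mathcal{VF}$-converging to $\bS^n$, so the full sequence does.

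The step I expect to be the main obstacle is the quantitative rigidity of stage (iii). Llarull's theorem is proved by index theory, which is rigid and discontinuous by nature, and at present there is no robust machinery for making the Dirac-operator method \emph{quantitatively stable} under weak convergence: one must control twisted harmonic spinors for a sequence of metrics enjoying only a scalar lower bound, with no two-sided curvature bound and no injectivity-radius lower bound, precisely the regime in which spinor bundles and their $L^2$-theory behave badly. A purely minimal-surface or $\mu$-bubble proof of Llarull's theorem --- which at present is available only in low dimensions and special cases --- would make this far more tractable, since such methods localize and sit more naturally in the intrinsic flat framework. A secondary difficulty is stage (i): obtaining a uniform filling-volume bound from $\mathrm{MinA}$ and a scalar lower bound, rather than from a Ricci or sectional bound, requires the neck analysis to be carried out with fully quantitative, scale-invariant constants.
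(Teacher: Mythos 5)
This statement is labeled a \emph{conjecture} in the paper (Conjecture~\ref{conjLL}); the paper supplies no proof of it. Indeed the paper's main contribution with respect to this conjecture is in the opposite direction: Theorem~\ref{minALL} constructs sequences satisfying every hypothesis \emph{except} the $\mathrm{MinA}$ bound that converge to a disconnected limit, demonstrating that the $\mathrm{MinA}$ hypothesis cannot be dropped, and Theorem~\ref{spikesLL} gives Ilmanen-type examples that converge in the $\mathcal{VF}$-sense to $\bS^n$ but not in $\mathrm{GH}$. So there is no ``paper's proof'' to compare your argument against; what you have written is a research plan for an open problem, not a proof.

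Your plan is a sensible high-level outline, but two of its four stages are not arguments, and you essentially say so yourself. For stage~(iii), no quantitative or ``almost-rigidity'' version of Llarull's theorem is currently known; index-theoretic rigidity does not, by itself, come with stability constants, and passing a scalar-curvature lower bound through intrinsic flat limits is an open foundational issue (the sewing examples of Basilio--Dodziuk--Sormani and Theorem~\ref{sewingthm} of this paper show that scalar lower bounds do \emph{not} persist under $\mathcal{F}$-limits in general). For stage~(ii), the claim that a small $\mathrm{MinA}$ would be detected by a minimization or $\mu$-bubble argument across a neck is plausible but is exactly what one would need to prove; no such argument is given, and the sequences in Theorems~\ref{spikesLL}--\ref{spikesMV} are constructed without verifying a $\mathrm{MinA}$ lower bound precisely because this is delicate. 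One factual correction to stage~(i): Wenger's compactness theorem already yields $\mathcal{F}$-precompactness from uniform diameter and volume (mass) bounds alone for closed manifolds, without any $\mathrm{MinA}$ hypothesis; the role of $\mathrm{MinA}$ is not to obtain a limit but to rule out that the limit is trivial or disconnected and to promote $\mathcal{F}$ to $\mathcal{VF}$, so that part of your framing should be adjusted.
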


We construct a sequence of manifolds each of which is two spheres connected by a thin tunnel, which is related to \Cref{conjLL}. This sequence converges in the volume-preserving intrinsic flat ($\mathcal{VF}$) sense to a disjoint union of two $n$-spheres (see \Cref{minafig}). Moreover, the sequence shows without the lower bound on $\mathrm{MinA}$ then the conclusion of \Cref{conjLL} fails to hold.

\begin{mainthm}\label{minALL}
There exists a convergent sequence of Riemannian manifolds $M^n_j=(\bS^n,g_j)$, $n\geq 3$, with $M_j^n\xrightarrow{\mathcal{VF}} M_\infty$
such that
\[
R^j\geq n(n-1)-\frac{1}{j}, \text{ }\diam\left( M_j\right) \leq D, \text{ and } \vol\left( M_j\right) \leq V,
\]
for some constants $D, V>0$. Furthermore, there are smooth degree one, $1$-Lipschitz maps $f_j:M^n_j\to (\bS^n,g_{rd})$ which converge to a $1$-Lipschitz map $f_\infty:M_\infty\to (\bS^n,g_{rd})$, and $M_\infty$ is the disjoint union of two $n$-spheres. 
\end{mainthm}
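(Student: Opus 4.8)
The plan is to realize $M_j^n$ as the connected sum of two round unit $n$-spheres joined by an increasingly thin Gromov--Lawson tunnel, and to track three things along the sequence: the scalar curvature, the metric invariants $\diam$ and $\vol$, and the $1$-Lipschitz maps to $(\bS^n, g_{rd})$. First I would fix two disjoint round spheres $(\bS^n, g_{rd})$ and remove a small geodesic ball of radius $\rho_j \to 0$ from each. Using the enhanced Gromov--Lawson tunnel construction announced in the abstract (which allows attaching tunnels while decreasing the scalar curvature by only an arbitrarily small amount), I would glue in a tunnel of length $\ell_j$ and neck radius $\rho_j$ connecting the two punctured spheres, obtaining a metric $g_j$ on $\bS^n \# \bS^n \cong \bS^n$. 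The enhancement gives $R^j \geq n(n-1) - \tfrac1j$ provided $\rho_j$ is chosen small enough relative to $j$; this is exactly where the improved construction is essential, since the classical Gromov--Lawson tunnel only preserves a \emph{positive} lower bound, not one this close to the sharp constant $n(n-1)$.

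Next I would control the coarse geometry. The two spherical regions are almost isometric to the unit round sphere, so each contributes diameter close to $\pi$ and volume close to $\vol(\bS^n)$; the tunnel itself can be taken to have length $\ell_j$ bounded by a fixed constant (say $\ell_j \le 10$) and, since its cross-sections are $(n-1)$-spheres of radius $\le \rho_j$, its volume is $O(\rho_j^{n-1}\ell_j) \to 0$. Hence $\diam(M_j) \le D := 2\pi + 20$ and $\vol(M_j) \le V := 2\vol(\bS^n) + 1$ for all large $j$, uniformly. This gives the stated metric bounds with explicit $D, V$.

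For the convergence statement I would argue that as $\rho_j \to 0$ the tunnel region has volume tending to zero and is therefore ``invisible'' in the volume-preserving intrinsic flat limit: the sequence $M_j^n$ converges in the $\mathcal{VF}$-sense to $M_\infty := (\bS^n, g_{rd}) \sqcup (\bS^n, g_{rd})$. Concretely one builds an integral current space structure and uses that the filling volume between $M_j$ and the disjoint union is controlled by the volume of the tunnel plus the volume of the small removed balls, all of which are $O(\rho_j^{n-1}) + O(\rho_j^n) \to 0$; this is the same mechanism underlying the Basilio--Dodziuk--Sormani sewing examples, so I would cite and adapt that estimate rather than redo it. Finally, for the maps: on each spherical region take $f_j$ to be (a small $1$-Lipschitz perturbation of) the near-isometry to its corresponding copy of $(\bS^n, g_{rd})$, and on the thin tunnel collapse it onto a short path joining the two basepoints in $\bS^n$; since the tunnel is thin and short this can be done $1$-Lipschitz and with degree one overall. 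As $j \to \infty$ these converge to the obvious $1$-Lipschitz map $f_\infty$ on $M_\infty$ that is an isometry on each sphere.

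The main obstacle I expect is the first step: verifying that the enhanced tunnel construction genuinely yields the \emph{near-optimal} lower bound $R^j \ge n(n-1) - \tfrac1j$ while simultaneously keeping the tunnel short enough that $\diam$ stays bounded. There is tension here --- keeping scalar curvature high near the sharp sphere value typically forces the geometry to stay close to the round sphere, whereas bending the neck down to radius $\rho_j$ and back is a large curvature perturbation; the resolution is that the construction concentrates the curvature cost in a region of size $O(\rho_j)$, so shrinking $\rho_j$ buys back curvature without lengthening the tunnel. Making this quantitative estimate precise is the technical heart of the argument, and it relies entirely on the improved Gromov--Lawson tunnel lemma proved elsewhere in the paper.
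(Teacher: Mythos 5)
Your overall strategy matches the paper's: take two round unit $n$-spheres, excise shrinking geodesic balls, join them with a thin tunnel built via the improved Gromov--Lawson construction (Proposition~\ref{propT}), verify $R^j \geq n(n-1) - \tfrac1j$, bound diameter and volume, and show $\mathcal{VF}$-convergence to the disjoint union using that the tunnel and excised balls have vanishing volume (the paper uses the Lakzian--Sormani estimate, Theorem~\ref{thm4.6}, essentially the mechanism you invoke). That part of your argument is sound.

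The genuine gap is in your construction of the degree-one $1$-Lipschitz maps $f_j\colon M_j^n \to (\bS^n, g_{rd})$. You propose taking a near-isometry on \emph{each} spherical region and collapsing the tunnel onto a short arc. But the target is a \emph{single} unit round sphere: there is no ``corresponding copy'' for each side. If both spherical caps map near-isometrically onto $\bS^n$, the degree is $\pm 2$ or $0$ depending on orientations, never $1$; a near-isometry of a sphere onto a sphere cannot have degree $0$, so there is no way to make your recipe come out to degree one. This also contradicts the limit behavior you describe: $f_\infty\colon M_\infty \to \bS^n$ cannot be near-isometric on both components of $M_\infty$ and have degree one. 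The paper resolves this by exploiting that $M_j$ is rotationally symmetric, $g_j = dt^2 + \sin^2(\rho_j(t))\, g_{\bS^{n-1}}$ on $[0,D_j]\times\bS^{n-1}$, and building a \emph{monotone} radial profile $f_j\colon[0,D_j]\to[0,\pi]$ that is essentially the identity on one spherical cap and decreases linearly across the tunnel and the second cap, squeezing them into a shrinking interval $[0, \tfrac{1}{10}\rho_j(\tfrac{D_j}{2})]$; then $F_j(t,\theta)=(f_j(t),\theta)$ is an orientation-preserving diffeomorphism (degree one), and the $1$-Lipschitz condition reduces to $|f_j'|\le 1$ and $\sin^2(f_j(t))\le\sin^2(\rho_j(t))$, each of which the paper verifies region by region after smoothing $f_j$ via the mollification of Lemma~\ref{convolution}. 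In the limit this yields $f_\infty$ that is an isometry on one component of $M_\infty$ and constant on the other, which is exactly why $f_\infty$ fails to be a global isometry. You also omit how the limit map $f_\infty$ is obtained; the paper applies the intrinsic-flat Arzel\`a--Ascoli theorem (Theorem~\ref{thm6.1}), which is the right tool and worth citing explicitly.
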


\begin{figure}[H]
    \centering
    \includegraphics[width=12cm]{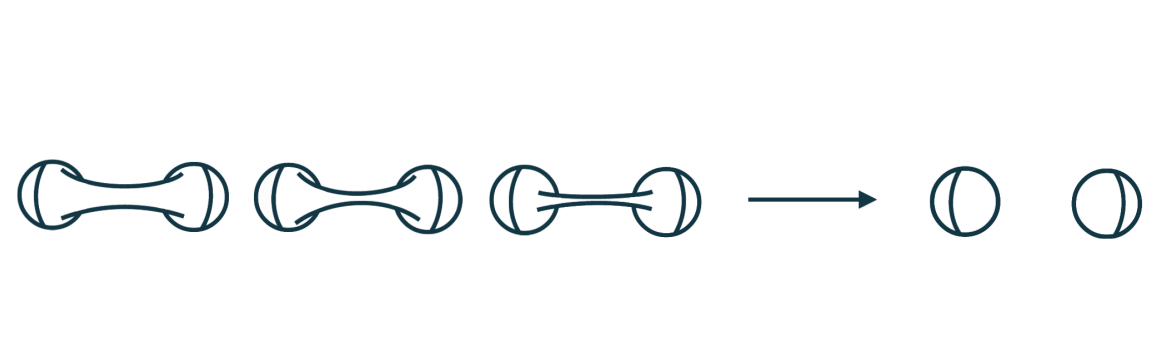}
    \caption{A sequence of spheres that converge in $\mathcal{VF}$-sense to the disjoint union of two spheres.}
    \label{minafig}
\end{figure}

Sormani proposed the $\mathrm{MinA}$ condition in \cite{S2} to prevent bad limiting behavior, such as bubbling and pinching, along the sequence. The motivation for such a condition comes from the sewing construction of Basilio, Dodziuk, and Sormani \cite{BDS}. This construction shows the existence of a sequence of manifolds with positive scalar curvature, which has an intrinsic flat $(\mathcal{F})$ limit that does not have positive scalar curvature in some generalized sense. Other sequences of positive scalar curvature manifolds have also been constructed (\cite{BS}, \cite{BKS}) whose $\mathcal{F}$-limits have undesirable properties. The key to the construction of these examples is the ability to glue in tunnels with controlled geometry. In those examples, it is unknown if the scalar curvature of the tunnel and of the resulting manifold can be kept close to the scalar curvature of the manifold to which the tunnel is being glued. Therefore, these examples may not satisfy the curvature condition in \Cref{conjLL}. In \Cref{constr}, we prove our two main technical propositions, which are of independent interest. One of which allows us to get quantitative control over the scalar curvature of the tunnel and of the resulting manifold. In particular, given a manifold with scalar curvature bounded below by $\kappa$, then for small enough $\eps>0$ there exists a tunnel such that the resulting manifold has scalar curvature bounded below by $\kappa-\eps$.

We use this new way of attaching tunnels to manifolds that maintains control over the scalar curvature to construct the sequence in \Cref{minALL}. Moreover, we can make a similar example related to Marques-Neves' rigidity theorem. The theorem of Marques-Neves pertains to the three dimensional sphere and the min-max quantity $\mathrm{width}$. Let us recall the definition of $\mathrm{width}$. Let $g$ be a Riemannian metric on the 3-sphere and $x_4:\bS^3\subseteq \bR^4 \to \bR$ be the height function. For each $t\in[-1,1]$, let ${\Sigma'}_t=\{x\in\bS^3:x_4=t\}$ and ${\Lambda'}$ be the collection of all families $\{\Sigma_t\}$ such that $\Sigma_t=F_t({\Sigma'}_t)$ for some smooth one-parameter family of diffeomorphisms $F_t$ of the 3-sphere all of which are isotopic to the identity. The $\mathrm{width}$ of $(\bS^3,g)$ is the following min-max quantity
\[
\mathrm{width}(\bS^3,g)=\inf_{\{\Sigma_t\}\in{\Lambda'}} \sup_{t\in[-1,1]} |\Sigma_t|_g,
\]
where $|\Sigma|_g$ is the Hausdorff two measure of $\Sigma$. 

The rigidity theorem of Marques-Neves \cite{MV} says if there is a Riemannian metric on the 3-sphere with positive Ricci curvature, scalar curvature greater than or equal to 6, and $\mathrm{width}(\bS^3,g) \geq 4\pi$, then it is isometric to the standard unit round 3-sphere. This leads to the following naive stability conjecture. 

\begin{con}\label{conjnMV}
Suppose $M^3_j=(\bS^3,g_j)$ are homeomorphic spheres satisfying
\[
R^j\geq 6-\frac{1}{j}, \text{ }\mathrm{width}(M^3_j)\geq 4\pi, \text{ }\diam\left( M^3_j\right) \leq D, \text{ and } \vol\left(M^3_j\right) \leq V 
\]
where $R^j$ is the scalar curvature of $M^3_j$. Then $M^3_j$ converges in the $\mathcal{VF}$-sense to $(\bS^3,g_{rd})$ where $g_{rd}$ is the Riemannian metric for the standard unit round 3-sphere.
\end{con}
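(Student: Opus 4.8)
The plan is to run the standard two-step template for stability results of this type: first extract a limit space by a compactness theorem, then identify that limit as the round sphere by passing the four hypotheses to the limit and invoking a limiting form of the Marques--Neves rigidity theorem; uniqueness of the limit then promotes subsequential convergence to convergence of the whole sequence.

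\emph{Step 1 (precompactness).} Each $M^3_j=(\bS^3,g_j)$ is a closed oriented Riemannian $3$-manifold, hence an integral current space with zero boundary. Since $\diam(M^3_j)\le D$ and $\vol(M^3_j)\le V$, the intrinsic flat compactness theorem of Wenger produces a subsequence $\mathcal F$-converging to an integral current space $N$. To upgrade to $\mathcal{VF}$ convergence one must exclude loss of volume in the limit: the bound $\mathrm{width}(M^3_j)\ge 4\pi$ together with sweepout-type volume estimates (Gromov, Guth) forces a uniform lower bound $\vol(M^3_j)\ge v_0>0$, and one then argues — using the scalar curvature lower bound to rule out volume draining into thin necks — that $\mathbf{M}(N)=\lim_j\vol(M^3_j)$, so that $N$ is a genuine $3$-dimensional limit and the convergence is volume-preserving.

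\emph{Step 2 (the limit is round, and the whole sequence converges).} One then shows: (a) $N$ carries ``$R\ge 6$'' in the appropriate synthetic/distributional sense, inherited via a lower-semicontinuity principle for scalar lower bounds under $\mathcal F$-convergence; (b) $\mathrm{width}(N)\ge 4\pi$, by pushing forward admissible sweepouts of the $M^3_j$ and using lower semicontinuity of the max-area functional; (c) $N$ has the topology and, after Step 1, the regularity needed to run min-max. A limiting/compactness version of the Marques--Neves theorem — or, alternatively, first smoothing $N$ so as to preserve all three quantities up to an arbitrarily small error and applying the theorem directly — then yields that $N$ is isometric to $(\bS^3,g_{rd})$. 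Since every subsequence of $(M^3_j)$ has a further $\mathcal{VF}$-convergent subsequence with this same limit, the full sequence $\mathcal{VF}$-converges to $(\bS^3,g_{rd})$.

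\emph{Main obstacle.} The crux is Step 2, and specifically the behaviour of the scalar lower bound and of $\mathrm{width}$ under intrinsic flat limits in the absence of a Ricci bound. Marques--Neves use positive Ricci curvature essentially — both to control the min-max geometry and to obtain the rigidity — so without it neither lower semicontinuity of $\mathrm{width}$ nor persistence of a scalar lower bound in the $\mathcal F$-limit is available off the shelf; scalar lower bounds are precisely what is destroyed by the bubbling phenomena recalled in the introduction, and verifying ``$\vol$-preserving, no necks'' in Step 1 is equally delicate for the same reason. This is why the statement is called \emph{naive}: making Step 2 (and the non-collapsing part of Step 1) go through appears to require supplementing the hypotheses with a $\mathrm{MinA}$-type or Ricci-type condition, and the constructions of the present paper are designed to probe exactly this gap.
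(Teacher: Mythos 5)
This statement is a \emph{conjecture} that the paper explicitly refutes, not a theorem it proves: Theorem~B$'$ is titled ``Counterexample to Conjecture~\ref{conjnMV},'' so no proof of the statement can exist and your proposal cannot be repaired. The counterexample is the sequence $M^3_j$ of Proposition~\ref{mwst1}: two standard unit round $3$-spheres joined by an increasingly thin tunnel built with the refined Gromov--Lawson construction of Proposition~\ref{propT}, which keeps $R^j\ge 6-\tfrac1j$ while the diameter and volume stay uniformly bounded. A degree-one $1$-Lipschitz diffeomorphism onto the round sphere (Lemma~\ref{mwst2}) combined with Lemma~\ref{widlem} gives $\mathrm{width}(M^3_j)\ge 4\pi$, so every hypothesis of the conjecture holds, yet $M^3_j$ converges in the $\mathcal{VF}$-sense to the \emph{disjoint union} of two round spheres.

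The concrete point at which your outline breaks is Step~2: the hypotheses do not force the limit to be connected, let alone round. The lower bound on $\mathrm{width}$ is realized by a large minimal surface sitting inside one of the two sphere components and says nothing about the tiny unstable minimal sphere in the neck; it is $\mathrm{MinA}$, not $\mathrm{width}$, that would forbid such bubbling, and for the counterexample $\mathrm{MinA}(M^3_j)\to 0$. (Your Step~1 heuristic that the width bound plus the scalar bound ``rules out volume draining into thin necks'' is also misleading: in the counterexample no volume is lost at all --- the total mass converges to $2\vol(\bS^3)$ --- the sequence simply pinches into two pieces.) Your closing ``main obstacle'' paragraph correctly senses that the width hypothesis is too weak, but the honest conclusion is not that the proof is hard to complete; it is that the statement is false, which is exactly what this paper establishes.
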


In \cite{Mo}, Montezuma constructs Riemannian metrics $g_w$, $w>0$, on the $3$-spheres such that the scalar curvature is greater than or equal to 6 and the $\mathrm{width}(\bS^3,g_w)\geq w$. These manifolds look like a tree of spheres. In particular, they are constructed based on a finite full binary tree where the nodes are replaced with standard unit round $3$-spheres and the edges are replaced with Gromov-Lawson tunnels of positive scalar curvature. The width is shown to be proportional to the depth of the tree. Finally, by taking one of these manifolds with large enough width and scaling it scalar curvature greater than or equal to 6 is achieved. This example shows the failure of the rigidity statement of Marques-Neves and \Cref{conjnMV} when positive Ricci curvature is not assumed. 

Below we construct another counterexample that refutes \Cref{conjnMV} which is similar to the example constructed in \Cref{minALL}. By allowing the scalar curvature to be greater than or equal to $6-\eps$, we are to construct an example that is the connected sum of just two 3-spheres. Moreover, we are able to give explicit bounds on the volume and diameter of each manifold in the sequence. This counterexample is a sequence of spheres $M_j^3=(\bS^3,g_j)$ that converges in the volume preserving intrinsic flat $(\mathcal{VF})$ sense to the disjoint union of two spheres. The $M_j^3$ are two spheres connected by a thin tunnel (see \Cref{minafig}), and the tunnel gets increasingly thin along the sequence. 

\begin{mainth}[Counterexample to \Cref{conjnMV}]\label{minAMV}
There exists a convergent sequence of Riemannian manifolds $M^3_j=(\bS^3,g_j)$, with $M_j^3\xrightarrow{\mathcal{VF}} M_\infty$
such that
\[
R^j\geq 6-\frac{1}{j}, \text{ }\mathrm{width}(M^3_j)\geq 4\pi, \text{ }\diam\left( M^3_j\right) \leq D, \text{ and } \vol\left( M^3_j\right) \leq V,
\]
for some constants $D, V>0$, and $M_\infty$ is the disjoint union of two $3$-spheres.
\end{mainth}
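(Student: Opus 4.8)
The plan is to follow the same blueprint as Theorem~A (\Cref{minALL}), specializing to $n=3$ and additionally tracking the $\mathrm{width}$ of the 3-sphere. First I would take $M_j^3$ to be the connected sum of two copies of the standard unit round 3-sphere $(\bS^3,g_{rd})$ joined by a thin Gromov--Lawson tunnel, where the tunnel is built using the improved tunnel construction of \Cref{constr}: given $\eps_j=1/j$, the proposition produces a tunnel whose neck width $\delta_j\to 0$ such that the resulting metric $g_j$ on $\bS^3$ has $R^j\geq 6-\eps_j=6-1/j$. One must also arrange that the tunnel is attached inside small geodesic balls in each round sphere, so that the bulk of each $M_j^3$ is isometric to $(\bS^3,g_{rd})$ minus two tiny balls; this immediately gives uniform bounds $\diam(M_j^3)\leq D$ and $\vol(M_j^3)\leq V$ (indeed $\vol\to 2\cdot\vol(\bS^3,g_{rd})$ and $\diam$ stays bounded since the tunnel can be taken of bounded length).

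The $\mathcal{VF}$-convergence $M_j^3\xrightarrow{\mathcal{VF}} M_\infty$, with $M_\infty=(\bS^3,g_{rd})\sqcup(\bS^3,g_{rd})$, would be established exactly as in \Cref{minALL}: as $\delta_j\to 0$ the tunnel regions have volume tending to $0$ and, because their boundary areas also shrink, one can explicitly exhibit an integral current space filling between $M_j^3$ and $M_\infty$ of arbitrarily small mass, so the intrinsic flat distance $d_{\mathcal{F}}(M_j^3,M_\infty)\to 0$; the volume convergence $\vol(M_j^3)\to\vol(M_\infty)$ is built into the construction, giving volume-preserving intrinsic flat convergence. (If \Cref{minALL} is proved by citing the sewing/tunnel estimates of \cite{BDS}, \cite{BKS}, the same citations apply verbatim here.)

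The one genuinely new ingredient over \Cref{minALL} is the lower bound $\mathrm{width}(M_j^3)\geq 4\pi$. Here I would argue as follows: any sweepout $\{\Sigma_t\}$ of $M_j^3$ by surfaces isotopic to the standard height-function slices must, by a topological/continuity argument, contain a slice $\Sigma_{t_0}$ that separates the two ``sphere'' regions, and in fact for the sweepout to pass the thin neck it is forced to sweep out essentially all of at least one of the two nearly-round 3-spheres. Since a round unit $\bS^3$ has $\mathrm{width}=4\pi$ (realized by the equatorial 2-sphere, area $4\pi$), and our spheres are isometric to round $\bS^3$ away from an arbitrarily small ball, the sup of areas over any such family is at least $4\pi-o(1)$; absorbing the $o(1)$ by making the neck sufficiently thin and, if necessary, rescaling slightly (which only improves the scalar curvature bound when the rescaling factor is $>1$), one gets $\mathrm{width}(M_j^3)\geq 4\pi$ on the nose. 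This comparison step is where I expect the main technical obstacle to lie, since $\mathrm{width}$ is a delicate min-max quantity and one must ensure the thin tunnel cannot be exploited by a clever sweepout to lower the max-area below $4\pi$; the resolution is to show that any admissible sweepout restricts to an admissible (or nearly-admissible) sweepout of one of the round summands, for which Marques--Neves' rigidity computation $\mathrm{width}(\bS^3,g_{rd})=4\pi$ applies.

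Finally, the degree-one $1$-Lipschitz maps $f_j:M_j^3\to(\bS^3,g_{rd})$ are not required by the statement of \Cref{minAMV}, so that part of \Cref{minALL} can be dropped here; what remains is to assemble the four estimates ($R^j\geq 6-1/j$, $\mathrm{width}\geq4\pi$, $\diam\leq D$, $\vol\leq V$) together with the $\mathcal{VF}$-convergence, all of which follow from the construction above once the width comparison is in hand.
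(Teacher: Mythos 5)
Your construction of $M_j^3$ matches the paper's (two round $3$-spheres joined by a thin tunnel built via the improved tunnel proposition), and the $\mathcal{VF}$-convergence argument is the same. The gap is in the $\mathrm{width}$ step, and it is precisely where you expected trouble.

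You write that the degree-one $1$-Lipschitz maps ``are not required by the statement of \Cref{minAMV}, so that part of \Cref{minALL} can be dropped here,'' and then attempt a direct min-max comparison. This is exactly backwards from what the paper does: those maps are the mechanism for the width bound. The paper first proves \Cref{widlem}, that if $F:(\bS^3,g_1)\to(\bS^3,g_2)$ is a $1$-Lipschitz diffeomorphism isotopic to the identity, then pushing forward any admissible sweepout $\{\Sigma_t\}$ of $(\bS^3,g_1)$ gives an admissible sweepout $\{F(\Sigma_t)\}$ of $(\bS^3,g_2)$ whose slices have no larger area, so $\mathrm{width}(\bS^3,g_2)\leq\mathrm{width}(\bS^3,g_1)$. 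The $1$-Lipschitz map $F_j$ of \Cref{mwst2} (precomposed with a scaling reparametrization so it is isotopic to the identity) then gives $\mathrm{width}(M_j^3)\geq\mathrm{width}(\bS^3,g_{rd})=4\pi$ on the nose, with no $o(1)$ error to absorb. In other words, the seemingly extraneous $1$-Lipschitz maps from the Llarull-side theorem are exactly what supplies the width estimate in the Marques--Neves-side theorem.

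Your alternative has two problems. First, the heuristic that any admissible sweepout is ``forced to sweep out essentially all of at least one of the two nearly-round $3$-spheres'' is the entire technical content of the claim and is left unproved; showing that a sweepout by slices isotopic to the standard level sets cannot evade a near-equatorial slice in one of the summands is not obvious and no argument is given. Second, the rescaling patch has the sign wrong: scaling $g\to\lambda^2 g$ with $\lambda>1$ multiplies $\mathrm{width}$ by $\lambda^2$ but multiplies scalar curvature by $\lambda^{-2}$, which \emph{worsens} the bound $R^j\geq 6-1/j$, not ``improves'' it. So the dilation would only be admissible after quantifying the $o(1)$ against $1/j$, and even then you would still need to fill in the missing sweepout argument. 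The paper's route through \Cref{mwst2} and \Cref{widlem} sidesteps both issues.
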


Therefore, something stronger than $\mathrm{width}$ is required for a stability conjecture related to the rigidity theorem of Marques-Neves. A conjecture in \cite{S} attributed to Marques and Neves does hypothesize a stronger condition. In particular, it replaces the uniform lower bound on $\mathrm{width}$ with a uniform lower bound on $\mathrm{MinA}$.

\begin{con}\label{conjMV}
Suppose $M^3_j=(\bS^3,g_j)$ are homeomorphic spheres satisfying
\[
R^j\geq 6-\frac{1}{j}, \text{ }\mathrm{MinA}(M^3_j)\geq 4\pi-\frac{1}{j}, \text{ }\diam\left( M^3_j\right) \leq D, \text{ and } \vol\left( M^3_j\right) \leq V 
\]
where $R^j$ is the scalar curvature of $M^3_j$. Then $M^3_j$ converges in the $\mathcal{VF}$-sense to $(\bS^3,g_{rd})$ the standard unit round sphere.
\end{con}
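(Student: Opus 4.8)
The plan is to argue by contradiction: assuming the conclusion fails, pass to a subsequence, extract a volume-preserving intrinsic flat limit from the $\mathrm{MinA}$, diameter and volume bounds, and then show that the limit must be $(\bS^3,g_{rd})$, a contradiction. The first step is a compactness statement: I would show that any sequence with $\mathrm{MinA}(M^3_j)\geq 4\pi-\frac1j$, $\diam(M^3_j)\leq D$ and $\vol(M^3_j)\leq V$ has a subsequence converging in the $\mathcal{VF}$-sense to a nonzero integral current space $M_\infty$ with $\diam(M_\infty)\leq D$ and $\vol(M_\infty)=\lim_j\vol(M^3_j)$. The uniform diameter and volume bounds give, by the Sormani-Wenger compactness theorem, subconvergence in the $\mathcal{F}$-sense; the role of the uniform lower bound on $\mathrm{MinA}$ — in the spirit of Sormani's no-cancellation and non-collapsing results — is to rule out $M_\infty=\mathbf 0$ and to prevent loss of volume in the limit, i.e.\ to upgrade $\mathcal{F}$-convergence to $\mathcal{VF}$-convergence. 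This is precisely where the $\mathrm{MinA}$ hypothesis does work the weaker $\mathrm{width}$ hypothesis of \Cref{conjnMV} cannot: the thin-tunnel sequence of \Cref{minAMV} has $\mathrm{MinA}\to 0$ because of the small minimal neck sphere, and it is exactly such a surface that the bound $\mathrm{MinA}\geq 4\pi-\frac1j$ forbids.

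The second step is to show that $M_\infty$ is a closed smooth Riemannian manifold diffeomorphic to $\bS^3$. With no Ricci bound available this cannot come from Cheeger-Gromov compactness, so the plan is to use the scalar lower bound $R^j\geq 6-\frac1j$ together with $\mathrm{MinA}\geq 4\pi-\frac1j$ to extract uniform a priori control along the sequence — a non-collapsing estimate (a uniform lower volume bound for metric balls of a fixed radius) and a quantitative obstruction to neck formation at small scales, via $\mu$-bubble and Gromov-Lawson-surgery arguments — strong enough to promote $\mathcal{VF}$-convergence to Gromov-Hausdorff convergence and then to $C^0$ convergence of metrics. An alternative route, which I would develop in parallel, is to bypass regularity and instead prove a synthetic version of the Marques-Neves rigidity directly for $\mathcal{VF}$-limits: show that $R^j\geq 6-\frac1j$ passes to $M_\infty$ as ``generalized scalar curvature $\geq 6$'' and that $\mathrm{MinA}\geq 4\pi-\frac1j$ passes to ``every area-minimizing integral $2$-cycle in $M_\infty$ has mass $\geq 4\pi$,'' the $\mathrm{MinA}$ control being exactly what prevents the Basilio-Dodziuk-Sormani type pathology \cite{BDS} from appearing in the limit.

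Once the limit is understood, I would invoke a suitable form of the rigidity theorem of Marques-Neves \cite{MV}. Note that $\mathrm{width}\geq\mathrm{MinA}$ always holds, since the Almgren-Pitts min-max surface is a nonempty minimal hypersurface and its total mass is a positive integer combination of component areas, each at least $\mathrm{MinA}$; hence $\mathrm{width}(M_\infty)\geq 4\pi$. Combining Gauss-Bonnet on the min-max surface with the Gauss equation, the index-at-most-one second-variation inequality, and $R_{M_\infty}\geq 6$ yields an area bound forcing $M_\infty=(\bS^3,g_{rd})$ — recovering from the $\mathrm{MinA}$ bound the connectedness, genus-zero and positivity information that \cite{MV} extract from the hypothesis $\mathrm{Ric}>0$. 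Since every subsequence of the original sequence then has a further subsequence $\mathcal{VF}$-converging to $(\bS^3,g_{rd})$, the whole sequence converges to $(\bS^3,g_{rd})$, contradicting the assumed failure and completing the proof.

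The main obstacle is the second step, and it is the reason \Cref{conjMV} is open. Intrinsic flat limits of manifolds with only a lower scalar curvature bound can be wildly irregular, and there is no lower semicontinuity of scalar curvature under $\mathcal{F}$-convergence in general — the sewing construction of \cite{BDS} and the examples of \Cref{minALL} and \Cref{minAMV} in this paper show exactly what goes wrong when $\mathrm{MinA}$ is not controlled. The content of \Cref{conjMV} is the expectation that the $\mathrm{MinA}$ bound is precisely the additional hypothesis that suppresses bubbling and pinching; turning $\mathrm{MinA}\geq 4\pi-\frac1j$ and $R^j\geq 6-\frac1j$ into a quantitative ``no bubbling, no pinching'' local regularity estimate — a scalar-curvature analogue of the $\varepsilon$-regularity available in the Ricci setting, or an extension of Gromov's compactness results for scalar lower bounds to the $\mathcal{VF}$ topology — is the step that would require genuinely new ideas.
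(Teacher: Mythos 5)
The statement you are asked about is \Cref{conjMV}, which is a \emph{conjecture}: the paper does not prove it, and indeed offers no argument for it at all. The paper's only contact with this statement is negative and peripheral --- \Cref{minAMV} shows that the weaker hypothesis $\mathrm{width}\geq 4\pi$ in \Cref{conjnMV} is insufficient (the thin-tunnel sequence has $\mathrm{MinA}\to 0$, so it does not contradict \Cref{conjMV}), and \Cref{spikesMV} produces Ilmanen-type sequences satisfying every hypothesis of \Cref{conjMV} except the $\mathrm{MinA}$ bound, which the author explicitly cannot verify. So there is no proof in the paper to compare yours against, and a correct response to this prompt is to recognize the statement as open rather than to supply an argument.

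Your proposal, as you yourself concede in the final paragraph, is a strategy outline rather than a proof, and each of its three steps contains a genuine gap. In Step 1, Wenger compactness does give $\mathcal{F}$-subconvergence from the diameter and volume bounds, but there is no known theorem that a lower bound on $\mathrm{MinA}$ alone rules out the zero limit or prevents volume loss; upgrading $\mathcal{F}$ to $\mathcal{VF}$ convergence here is an assertion, not an argument (known noncancellation results require hypotheses of a quite different nature). Step 2 --- regularity of the limit, or alternatively lower semicontinuity of a generalized scalar curvature and persistence of the $\mathrm{MinA}$ bound under $\mathcal{F}$-limits --- is exactly the open core of the problem, and you state as much. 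Step 3 then applies the smooth Marques--Neves rigidity theorem (or an unproved synthetic analogue) to a limit space whose structure Step 2 was supposed to, but does not, establish; note also that \cite{MV} uses $\mathrm{Ric}>0$ to exclude stable minimal spheres, and while the paper remarks (citing \cite[Appendix A]{MV}) that scalar curvature near $6$ together with $\mathrm{MinA}$ near $4\pi$ excludes stable minimal surfaces in the smooth setting, transferring this to a nonsmooth limit is again unestablished. In short: the conjecture remains open, the paper does not claim otherwise, and your text should be read as a (reasonable) research plan, not a proof.
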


Since $\mathrm{width}$ is achieved by a minimal surface, we have that $\mathrm{width}(M^3,g)\geq \mathrm{MinA}(M^3,g).$ Moreover, Marques-Neves show in \cite{MV} that if $(\bS^3,g)$ contains no stable minimal surfaces, then we have that $\mathrm{MinA}(\bS^3,g)= \mathrm{width}(\bS^3,g)$. In the proof of the Marques-Neves rigidity theorem, the hypothesis of positive Ricci curvature is used to ensure the manifold contains no stable minimal embedded spheres. By \cite[Appendix A]{MV}, we see that if both the scalar curvature of a $3$-manifold is sufficiently close to 6 and $\mathrm{MinA}$ is sufficiently close to $4\pi$ then the manifold contains no stable minimal embedded surfaces.

The sequence of Riemannian manifolds constructed in \Cref{minAMV} has $\mathrm{MinA}(M^n_j)\to 0$ and so does not satisfy the hypotheses of \Cref{conjMV}. Theorems \ref{minALL} and \ref{minAMV} show the necessity of including a hypothesis like the bound on $\mathrm{MinA}$ to prevent bubbling along the sequence.

When studying a stability conjecture related to scalar curvature, one also often considers examples similar to the example described by Ilmanen. Ilmanen first described this example to demonstrate that a sequence of manifolds of positive scalar curvature need not converge in the Gromov-Hausdorff $(\mathrm{GH})$ sense. The example is a sequence of spheres with increasingly many arbitrarily thin wells attached to them (see \Cref{spikesfig}). Sormani and Wenger \cite[Example A.7]{SW} showed, using their $\mathcal{F}$-convergence for integral currents, that the Ilmanen example converges in the $\mathcal{F}$-sense. Over the past decade, Ilmanen-like examples have been constructed in varying settings to demonstrate that $\mathrm{GH}$-convergence is not the appropriate convergence in which to ask stability conjectures related to scalar curvature (\cite{LaS}, \cite{L}, \cite{Pe}, \cite{LS2}, \cite{LeS}, \cite{LS1}, \cite{AP}, \cite{APS}). In these examples, it is unknown if one can attach a well and only decrease the scalar curvature by a small amount; consequently, it was unknown if Ilmanen-like examples could exist for \Cref{conjLL} and \Cref{conjMV}. 

Our other main technical proposition (see \Cref{constr} below) shows that one can attach a well to a manifold with scalar curvature bounded below and only decrease the scalar curvature by an arbitrarily small amount. Therefore, we are able to construct Ilmanen-like examples related to \Cref{conjLL} and \Cref{conjMV}. In particular, we construct a sequence of spheres with scalar curvature larger than $n(n-1)-\eps_j$, volumes and diameters bounded, and smooth maps to the unit round $n$-sphere which are 1-Lipschitz and $\deg{f_j}\neq 0$. This sequence does not converge in the $\mathrm{GH}$-sense but does converge in the volume above distance below $(VADB)$ sense and the $\mathcal{VF}$-sense. Likewise, we are able to construct a sequence of spheres $M^3_j$ with scalar curvature larger than $6-\eps_j$, $\mathrm{width}$ larger than $4\pi$, and volumes and diameters bounded that does not converge in the $\mathrm{GH}$-sense but does converge in the $\mathrm{VADB}$-sense and the $\mathcal{VF}$-sense. Therefore, we can construct Ilmanen-like examples related to \Cref{conjMV} and \Cref{conjLL}. We, however, cannot verify that $\mathrm{MinA}$ stays uniformly bounded from below even though we expect that it does.

\begin{mainthm}\label{spikesLL}
There exists a convergent sequence of Riemannian manifolds $M^n_j=(\bS^n,g_j)$, with $M_j^n\xrightarrow{\mathrm{VADB}}  M_\infty$ and $M_j^n\xrightarrow{\mathcal{VF}} M_\infty$
such that
\[
R^j\geq n(n-1)-\frac{1}{j}, \text{ }\diam\left( M^n_j\right) \leq D, \text{ and } \vol\left( M^n_j\right) \leq V,
\]
for some constants $D, V>0$, and $M_\infty$ is the $n$-sphere. Furthermore, there are smooth degree non-zero, $1$-Lipschitz maps $f_j:M^n_j\to (\bS^n,g_{rd})$, and $M_\infty$ is the standard unit round $n$-sphere. However, the sequence has no subsequence that converges in the $\mathrm{GH}$-sense. 

\end{mainthm}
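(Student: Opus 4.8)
The plan is to build the sequence explicitly by starting from the round sphere $(\bS^n, g_{rd})$ and attaching, at the $j$-th stage, a collection of $N_j \to \infty$ thin Gromov--Lawson wells whose mouths are centered at an $\epsilon_j$-net of points in $\bS^n$, with the neck radii $\delta_j \to 0$ chosen small enough that the well construction from the new technical proposition in \Cref{constr} only drops the scalar curvature by at most $1/j$. Thus $R^j \geq n(n-1) - 1/j$ is immediate from that proposition. Each well is a capped-off thin tube of controlled length $L_j$; by making $L_j$ decay (or at least stay bounded) and $\delta_j$ decay fast relative to $N_j$, the total added volume $N_j \cdot \vol(\text{well}_j) \to 0$, giving a uniform bound $\vol(M^n_j) \le V$; and since each well has length $\le L_j \le L$, the diameter stays bounded by $D = \diam(\bS^n) + 2L + o(1)$. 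For the degree-one $1$-Lipschitz map $f_j : M^n_j \to (\bS^n, g_{rd})$, I would take $f_j$ to be the identity on the complement of the wells and collapse each well radially onto the center point of its mouth; since the wells are attached along small geodesic balls and the construction can be arranged so the metric near the mouth is only slightly perturbed, a standard rescaling makes this map $1$-Lipschitz while keeping degree one (the well maps to a point, contributing nothing to the degree).

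Next I would establish the two convergence statements. For $\mathcal{VF}$-convergence to $(\bS^n, g_{rd})$: the wells have volume going to zero, so they are "thin" in the sense needed for intrinsic flat estimates; I would cite or adapt the Sormani--Wenger type argument (as in \cite[Example A.7]{SW} for the Ilmanen example) showing that excising the wells changes the integral current space by a flat-small amount, and that the resulting manifold-with-boundary is close to $\bS^n$. The volume-preserving refinement follows because $\vol(M^n_j) \to \vol(\bS^n)$. For $\mathrm{VADB}$-convergence, I would verify the two defining conditions: the volumes converge (shown above), and the intrinsic distances on $M^n_j$ are bounded below by distances on a fixed common space converging to $\bS^n$ — here one uses that adding a well can only increase intrinsic distances between points of the original sphere, and that off a small bad set the metric is essentially unchanged, so $d_{M^n_j} \geq d_{\bS^n} - o(1)$ in the appropriate sense.

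Finally, for the non-existence of a $\mathrm{GH}$-convergent subsequence: this is the classical Ilmanen phenomenon. For $j$ large enough the net of well-mouths is $\epsilon_j$-dense with $\epsilon_j \to 0$, so every point of $\bS^n$ has a thin well of definite length $\ell_0 > 0$ attached within distance $\epsilon_j$; hence $M^n_j$ contains more and more disjoint "spikes" of length $\ge \ell_0$. One then shows the sequence is not precompact in $\mathrm{GH}$: either via a capacity/packing obstruction (the number of disjoint balls of a fixed small radius that can be $\ell_0/4$-separated grows without bound, violating the uniform total boundedness needed for $\mathrm{GH}$-precompactness), or by exhibiting explicitly that any $\mathrm{GH}$-limit would have to contain a line segment sprouting from every point, which is impossible for a compact length space of finite Hausdorff measure. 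I expect this packing/precompactness argument, done carefully so that it is robust under the perturbations coming from the scalar-curvature-controlled well construction (rather than the idealized cylindrical wells), to be the main technical obstacle — in particular, checking that the wells produced by \Cref{constr} still have the near-cylindrical geometry on a definite-length portion that the spike-counting argument requires. The spin condition and the existence of the degree-nonzero map are preserved trivially since $M^n_j$ is diffeomorphic to $\bS^n$ and the map is constructed by collapsing.
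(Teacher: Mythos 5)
Your overall strategy — attach many wells using the improved construction of \Cref{constr}, control the scalar curvature, volume, and diameter via \Cref{propW}, then argue $\mathcal{VF}$/VADB convergence and a packing obstruction to GH precompactness — is the same as the paper's (the paper proves a more general \Cref{mwmw1} and specializes to $(\bS^n,g_{rd})$). Your packing argument for non-GH-precompactness is essentially the paper's verbatim: $j$ wells of depth $d=1/2$ give $j$ disjoint balls of radius $\eps_0<1/4$, so capacity at scale $\eps_0$ is unbounded; your worry about whether the wells have ``near-cylindrical geometry'' is moot because \Cref{propW} builds the depth $d$ directly into the construction.

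Where you diverge, and where the gap lies, is in the $1$-Lipschitz map and the VADB distance inequality. You propose to build $f_j$ by fixing the complement of the wells and collapsing each well radially to a point; this requires designing a smooth interpolation near each neck, checking that it is nonexpanding relative to both the well metric and the round target, and verifying degree one at a regular value — none of which you carry out, and the collapse map is certainly not smooth without such care. The paper avoids all of this: \Cref{smoothattach} shows the well metric has the explicit form $g_j=dF_j^2+g$ on $B_g(p,2\delta)$, hence $g_j\geq g$ pointwise on all of $\bS^n$, so the \emph{identity} map $\mathrm{id}:(\bS^n,g_j)\to(\bS^n,g_{rd})$ is automatically a smooth, degree-one, $1$-Lipschitz diffeomorphism. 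That same inequality $g_j\geq g$ also gives the VADB condition directly: by the lemma in \Cref{bg} relating metric and distance inequalities, $d_j\geq d_\infty$ exactly. Your proposal instead asserts ``$d_{M^n_j}\geq d_{\bS^n}-o(1)$,'' but the VADB definition demands the exact pointwise inequality $d_j(\Psi_j(p),\Psi_j(q))\geq d_\infty(p,q)$ with no slack, so that step as written does not close. Using $g_j\geq g$ — which you get for free from the well construction — is the missing idea that collapses both the map construction and the VADB verification into one line.

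Two minor remarks: placing the well mouths on an $\eps_j$-net is unnecessary (disjointness of $j$ balls suffices, and the net requirement would need extra care to preserve disjointness as $\eps_j\to 0$), and for bounded diameter you should cite the triangle-inequality argument via \Cref{propW} rather than informally taking $L_j$ bounded — the wells have genuine depth $\geq d$, which you actually need for the GH obstruction, so $L_j$ does not decay.
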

\begin{mainth}\label{spikesMV}
There exists a convergent sequence of Riemannian manifolds $M^3_j=(\bS^3,g_j)$, with $M_j^3\xrightarrow{\mathrm{VADB}}  M_\infty$ and $M_j^3\xrightarrow{\mathcal{VF}} M_\infty$
such that
\[
R^j\geq 6-\frac{1}{j}, \text{ }\mathrm{width}(M^3_j)\geq 4\pi, \text{ }\diam\left( M^3_j\right) \leq D, \text{ and } \vol\left( M^3_j\right) \leq V,
\]
for some constants $D, V>0$, and $M_\infty$ is the standard unit round $3$-sphere. However, the sequence has no convergent subsequence in the $\mathrm{GH}$-topology.
\end{mainth}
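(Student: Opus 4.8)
The plan is to run the Ilmanen construction over the round $3$-sphere, but to attach the wells using the scalar-curvature-preserving well-attachment proposition proved in \Cref{constr} (the same device behind \Cref{minALL} and \Cref{minAMV}), and then to certify the bound $\mathrm{width}\ge 4\pi$ by comparison with $(\bS^3,g_{rd})$ through a $1$-Lipschitz degree-one map. Concretely, for each $j$ I would choose $N_j\to\infty$ points on $(\bS^3,g_{rd})$ with pairwise disjoint geodesic balls of radius $\delta_j\to 0$ about them, and apply the well-attachment proposition with $\kappa=6$ to replace each such ball by a Gromov--Lawson well of a fixed length $\ell_0$ and transverse size $\rho_j\to 0$, arranged so that (i) the resulting metric $\hat g_j$ on $\bS^3$ has $\hat R^j\ge 6-\eps_j$ for a prescribed $\eps_j\to 0$, and (ii) the wells and the excised balls contribute total volume $o(1)$ (e.g. $N_j=j$, with $\rho_j=\delta_j$ tending to $0$ fast enough). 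Finally I would set $g_j=(1+\eta_j)^2\hat g_j$ for a small $\eta_j\to 0$ to be pinned down in the width step, and $M^3_j=(\bS^3,g_j)$, $M_\infty=(\bS^3,g_{rd})$.

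The routine estimates come first. The underlying manifold is still $\bS^3$; the rescaling multiplies scalar curvature by $(1+\eta_j)^{-2}$, so $R^j\ge(6-\eps_j)(1+\eta_j)^{-2}\ge 6-\eps_j-12\eta_j$, and fixing $\eps_j,\eta_j\le 1/(2j)$ at the outset gives $R^j\ge 6-1/j$. Since the wells have bounded length and the base has diameter $\pi(1+\eta_j)$, the diameter is bounded by a constant $D$; since the wells and excised balls carry volume $o(1)$, the volume is bounded by a constant $V$. Mapping the unmodified region of $(\bS^3,g_{rd})$ into $M^3_j$ shows that distances drop by at most $o(1)$ (the wells are dead ends and only lengthen paths, and the core metric is $(1+\eta_j)^2$ times round away from $O(N_j\delta_j)$-small collars), while $\vol(M^3_j)\to\vol(\bS^3,g_{rd})$; hence $M^3_j\xrightarrow{\mathrm{VADB}}M_\infty$, and therefore $M^3_j\xrightarrow{\mathcal{VF}}M_\infty$ by the theorem that volume-above-distance-below convergence implies $\mathcal{VF}$-convergence (this is the mechanism behind the $\mathcal{F}$-convergence of the classical Ilmanen example, cf. \cite[Example A.7]{SW}). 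For the failure of $\mathrm{GH}$-convergence, the tips of the $N_j$ wells are pairwise at distance $\ge 2\ell_0-o(1)$, so $M^3_j$ contains $N_j\to\infty$ disjoint balls of radius $\ell_0/2$; the sequence is not uniformly totally bounded and so has no $\mathrm{GH}$-convergent subsequence.

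The substantive step is the width bound. Collapsing each well together with its Gromov--Lawson transition collar onto the corresponding excised ball of $(\bS^3,g_{rd})$ — the identity on the unmodified region, the $\bS^2$-cross-sections of a well sent to points and its axial coordinate to an accordion-folded $1$-Lipschitz segment inside the tiny ball — gives a continuous degree-one map $\pi_j$ with $\|d\pi_j\|\le 1+\eta_j$ pointwise, where $\eta_j\to 0$ as $\rho_j,\delta_j\to 0$; this is the $\eta_j$ fixed above, so $\pi_j\colon M^3_j\to(\bS^3,g_{rd})$ is $1$-Lipschitz. Then for any sweepout $\{\Sigma_t\}$ of $M^3_j$ admissible in the definition of $\mathrm{width}$, the pushforwards $\{(\pi_j)_\#\Sigma_t\}$ form a nontrivial (since $\deg\pi_j=1$) admissible family of integral $2$-cycles in $(\bS^3,g_{rd})$ with $\mathbf{M}\big((\pi_j)_\#\Sigma_t\big)\le|\Sigma_t|$, because a $1$-Lipschitz map does not increase mass. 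Using the identification of the diffeomorphism-sweepout $\mathrm{width}$ with the Almgren--Pitts width and that $\mathrm{width}(\bS^3,g_{rd})=4\pi$ (\cite{MV}), this gives $\mathrm{width}(M^3_j)\ge\mathrm{width}(\bS^3,g_{rd})=4\pi$.

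The main obstacle is precisely this last point: constructing the comparison map $\pi_j$ with Lipschitz constant $1+o(1)$ across the Gromov--Lawson transition collar, where the metric is genuinely non-round, and matching the diffeomorphism-sweepout definition of $\mathrm{width}$ to the Almgren--Pitts formulation so that the pushforward comparison is legitimate. Everything else is bookkeeping, with the essential new input — that a well can be attached while lowering the scalar curvature by an arbitrarily small amount — supplied by \Cref{constr}.
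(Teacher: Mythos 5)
Your overall architecture (wells attached via the scalar-curvature-controlled proposition of Section~4, volume/diameter bookkeeping, failure of total boundedness for $\mathrm{GH}$) matches the paper and is fine. But your width step is a genuinely different route, and it has a real gap that you yourself flag — and that gap is avoidable, because you are missing the simplification the paper's well construction hands you for free.

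By Lemma~\ref{smoothattach}, the well $W_j$ is literally $(B_g(p,2\delta), dF_j^2 + g)$: a graph over the original ball. So $g_j \geq g_{rd}$ pointwise on all of $\bS^3$, and the \emph{identity} map $\mathrm{id}\colon (\bS^3,g_j)\to(\bS^3,g_{rd})$ is already a $1$-Lipschitz diffeomorphism isotopic to the identity — no rescaling by $(1+\eta_j)^2$, no collapsing of wells to accordion curves. Lemma~\ref{widlem} then gives $\mathrm{width}(\bS^3,g_j)\ge\mathrm{width}(\bS^3,g_{rd})=4\pi$ instantly, and the same pointwise inequality $g_j\ge g_{rd}$ is exactly what drives the $\mathrm{VADB}$ convergence (the identity from $M_\infty$ into $M_j$ is distance non-decreasing, and volumes converge). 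This is the paper's proof, via Theorem~\ref{mwmw1} and Lemma~\ref{widlem}.

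Your route, by contrast, needs a non-diffeomorphic collapsing map $\pi_j$ that crushes each well to a point. That forces you out of the paper's $\mathrm{width}$ (defined over families $\Sigma_t = F_t(\Sigma_t')$ with $F_t$ diffeomorphisms isotopic to the identity): $(\pi_j)_\#\Sigma_t$ is not such a family, so you must pass to the Almgren--Pitts formulation in $\mathcal{Z}_2$, invoke nontriviality of the pushforward class, and then cite an identification of the two widths for both metrics. None of that is present in, or needed by, the paper, and you would also owe a proof that the collar region can be collapsed with Lipschitz constant $1+o(1)$. The fix is not to make $\pi_j$ work; it is to notice that $g_j\ge g_{rd}$ means the identity already does the job.
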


In \cite[Remark 9.4]{S}, Sormani suggests that it is believable that someone can construct a sequence of spheres with increasingly many increasingly thin wells which satisfy the hypothesis of \Cref{conjMV}. \Cref{spikesMV} partially answers this question by constructing such a sequence that satisfies all the hypotheses of \Cref{conjMV} except the bound on $\mathrm{MinA}$. 

\begin{figure}[H]
    \centering
    \includegraphics[width=10cm]{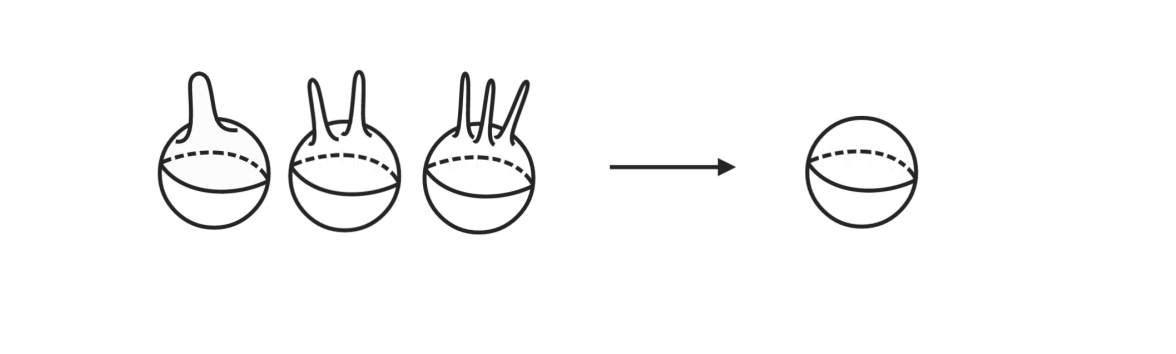}
    \caption{A sequence of spheres with increasingly many thin wells that converges in the $\mathrm{VADB}$-sense and $\mathcal{VF}$-sense to a sphere but has no convergent subsequence in the $\mathrm{GH}$-topology}
    \label{spikesfig}
\end{figure}

The main tools to prove the above theorems are new construction propositions which are proved in \Cref{constr}. We adapt the bending argument of Gromov and Lawson in \cite{GL}. Originally, the construction in \cite{GL} was used to make tunnels of positive scalar curvature to show, for example, that the connected sum of two manifolds with positive scalar curvature carries a metric of positive scalar curvature. For manifolds with constant positive sectional curvature, Dodziuk, Basilio, and Sormani in \cite{BDS} refined the construction to give control over the volume and diameter of the tunnel while maintaining positive scalar curvature.  Dodziuk in \cite{D} further refined the construction by replacing the positive sectional curvature condition with positive scalar curvature. In this paper, we construct wells and tunnels such that, if the scalar curvature of a manifold is bounded below, then one can attach a well or tunnel and only decrease the lower bound by an arbitrarily small amount while maintaining bounds on the diameter and volume.

The new well construction allows us to generalize the construction of Sormani and Wenger \cite[Example A.11]{SW} of a sequence of manifolds that converge in the  $\mathcal{F}$-sense to space that is not precompact. In particular, we are able to construct a sequence of spheres with scalar curvatures greater than $\kappa\geq 0$, uniformly bounded diameters, and uniformly bounded volumes such that the sequence converges in the $\mathcal{VF}$-sense to a limit that is not precompact. To construct the sequence we attach a sequence of increasingly thin wells to a sphere (see \Cref{noncptfig}).

\begin{mainthm}\label{noncompact}
There exists a convergent sequence of Riemannian manifolds $M^n_j=(\bS^n,g_j)$, $n\geq 3$, with $M_j^n\xrightarrow{\mathcal{VF}} M_\infty$
such that
\[
R^j\geq \kappa, \text{ }\diam\left( M^n_j\right) \leq D, \text{ and } \vol\left( M^n_j\right) \leq V,
\]
for some nonnegative constants $\kappa, D, V$, and $M_\infty$ is not precompact. 
\end{mainthm}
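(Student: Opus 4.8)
The plan is to build $M^n_j$ by attaching to the round sphere $\bS^n$ a sequence of increasingly thin, but \emph{non-shrinking} in number, wells, arranged so that the attachment points accumulate and the total added volume and diameter stay bounded while the limit metric space acquires infinitely many ``hairs'' of fixed length that prevent precompactness. Concretely, fix once and for all a sequence of disjoint small geodesic balls $B_i\subset \bS^n$, $i=1,2,\dots$, with radii $r_i\to 0$ chosen so that $\sum_i r_i^{n-1}<\infty$ (so the perturbations are summable) and so the centers $p_i$ converge to some point $p_\infty$. For the $j$-th manifold in the sequence, I would use the new well construction (the well-attachment proposition proved in \Cref{constr}) to replace $B_1,\dots,B_j$ by wells $W_{1,j},\dots,W_{j,j}$, where the $i$-th well has a neck of width comparable to some $\delta_{i,j}\to 0$ as $j\to\infty$ but a ``bulb'' of fixed, $i$-independent size — say each well contains an isometrically embedded region of definite diameter $\ell_0>0$ and definite volume $v_0$ divided by $2^i$ so the volumes remain summable. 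The key point supplied by \Cref{constr} is that each such surgery decreases the scalar curvature lower bound by at most an arbitrarily small amount, so choosing the error on the $i$-th well to be $\le \kappa\, 2^{-i}$ (or simply taking the pre-surgery metric to already have scalar curvature $\ge \kappa$ with room to spare — but since we only claim $R^j\ge\kappa$ with $\kappa$ possibly $0$, we instead start from $R\equiv n(n-1)>\kappa\ge 0$ on $\bS^n$ and absorb all the losses) keeps $R^j\ge\kappa$ for every $j$.

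Next I would verify the three uniform bounds. The diameter bound $\diam(M^n_j)\le D$ follows because each well adds at most $C(\ell_0+r_i)$ to distances and, more importantly, the wells are attached along a region of bounded diameter in $\bS^n$; one takes $D=\diam(\bS^n)+2\sup_i(\text{length of }W_i)+$ a fixed constant, using that the well lengths can be taken uniformly bounded (this is part of the diameter control in \Cref{constr}, analogous to the Basilio–Dodziuk–Sormani refinement). The volume bound $\vol(M^n_j)\le V$ follows from $\vol(M^n_j)\le \vol(\bS^n)+\sum_{i=1}^{j}\vol(W_{i,j})\le \vol(\bS^n)+\sum_{i=1}^\infty v_0 2^{-i}=V$, again using the volume control in \Cref{constr} and the summability built into the choice of the $v_0 2^{-i}$.

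The convergence and non-precompactness of the limit is the heart of the argument. As $j\to\infty$, on each fixed well index $i$ the neck width $\delta_{i,j}\to 0$, so in the intrinsic flat (and volume-preserving intrinsic flat) sense the $j$-th manifold converges to the metric space $M_\infty$ obtained from $\bS^n$ by attaching, at each $p_i$, a ``bubble'' $Y_i$ of diameter $\ell_0$ joined to $\bS^n$ at the single point $p_i$ (the neck pinches to a point in the limit). This is exactly the mechanism in \cite[Example A.11]{SW} and in the Ilmanen-type analysis used for \Cref{spikesLL}; I would cite and adapt the $\mathcal{F}$-convergence computation there, checking that the volumes converge (each $W_{i,j}$ has the same volume $v_0 2^{-i}$ for all $j$, and the neck contributes negligibly) so that $\mathcal{F}$-convergence upgrades to $\mathcal{VF}$-convergence. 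Finally, $M_\infty$ is not precompact: it contains, for every $i$, a point $q_i\in Y_i$ at distance $\ge \ell_0/2$ from $\bS^n$ and hence, since distinct bubbles meet $\bS^n$ at distinct points with $d(p_i,p_k)\to 0$ but each $q_i$ is ``$\ell_0/2$ out'' along its own hair, the collection $\{q_i\}$ is $\ge \ell_0/2 - o(1)$-separated (one routes any path from $q_i$ to $q_k$ through $p_i$ and $p_k$), so $M_\infty$ fails to be totally bounded and therefore is not precompact.

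The main obstacle I anticipate is making the limit description rigorous enough to conclude \emph{both} $\mathcal{VF}$-convergence to $M_\infty$ \emph{and} non-precompactness of that \emph{same} $M_\infty$ simultaneously — one must choose the rates $\delta_{i,j}$ and the bubble geometries $Y_i$ with enough care that (a) for fixed $i$ the $i$-th well genuinely converges to a fixed bubble $Y_i$ as $j\to\infty$ (requiring the bulb of $W_{i,j}$ to be $j$-independent, only the neck thinning), (b) the ``tail'' wells $W_{i,j}$ with $i$ close to $j$ contribute vanishing mass and diameter to the flat distance, and (c) the bubbles $Y_i$ are uniformly bi-Lipschitz to a fixed model so the separation estimate is uniform. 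This is a diagonal-type argument on the two-parameter family of necks, and the delicate point is the uniform-in-$i$ flat-distance estimate for the tail, which I would handle by the same summable-perturbation bookkeeping ($\sum \delta_{i,j}^{n-1}$ and $\sum v_0 2^{-i}$) that controls the volume and diameter, combined with the filling-volume estimates standard in intrinsic flat convergence.
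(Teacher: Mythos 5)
Your plan departs from the paper's proof in a way that creates a genuine gap. The paper does \emph{not} pinch the wells at all. Instead, each well $W_j$ is fixed once and for all by applying \Cref{propW} with a scale $\delta_j<2^{-j}$ and a depth parameter $d_j\in[2,10]$; the $i$-th manifold $M^n_i$ in the sequence is simply the round sphere with the \emph{first $i$} of these fixed wells attached. The limit
\[
M_\infty=\Bigl(M^n\setminus\bigcup_{j=1}^\infty B(p_j,\delta_j)\Bigr)\sqcup\bigcup_{j=1}^\infty W_j
\]
is then the smooth non-compact Riemannian length space obtained by attaching \emph{all} the wells. Non-precompactness is immediate: since $\diam(W_j)\geq d_j\geq 2$, each well contains a ball $E_j$ of radius $1$, and these are pairwise disjoint, so their centers form an infinite $1$-separated set and $M_\infty$ is not totally bounded. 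The $\mathcal{VF}$-convergence follows from the same summable-tail estimate as in \cite[Example A.11]{SW}: the only difference between $M^n_i$ and $M_\infty$ lives in a neighborhood of $p_\infty$ of vanishing diameter, and the total volume contributed by the wells with index $>i$ is controlled by $\sum_{j>i}\bigl(2^{-nj}+2^{-(n-1)j}\bigr)\to 0$. No pinching or two-parameter diagonal argument is needed.

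Your variant, in which the $i$-th well in $M^n_j$ has a neck of width $\delta_{i,j}\to 0$ as $j\to\infty$ and a ``bulb'' of \emph{fixed} diameter $\ell_0$, cannot be produced by \Cref{propW}. That construction yields a well lying entirely inside a tube of radius $\lesssim\delta$ that terminates in a cap; there is no decoupling of a thin neck from a bulb of a separate, $\delta$-independent diameter. To realize the flask shape you describe one would need a genuinely different construction in which the curve $\tilde\gamma$ re-expands after passing through a narrow neck, with the re-expanded radius held fixed as $\delta\to 0$ — this is not proved and is not an obvious modification (re-expanding introduces positive geodesic curvature at a point of small $\sin\theta$, which is exactly where the scalar curvature estimate \eqref{scalpos} is most delicate). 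Even granting such a well, your approach requires a uniform-in-$i$ flat-distance estimate for the tail of a two-parameter family of pinching necks before you can identify the limit and then deduce it is not precompact; the paper avoids all of this precisely by never letting the wells change with $j$. I would encourage you to drop the pinching entirely: the non-precompactness you want comes from \emph{accumulating} wells of definite diameter, not from pinching them off.
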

\begin{figure}[H]
    \centering
    \includegraphics[width=10cm]{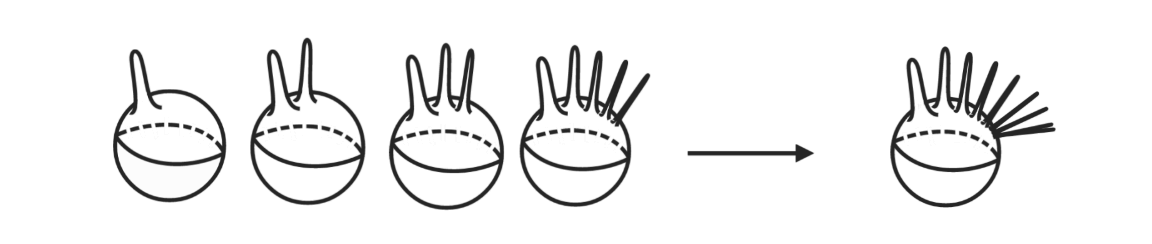}
    \caption{A sequence of spheres with increasingly many thin wells that converges in the $\mathcal{VF}$-sense to a limit which is not precompact.}
    \label{noncptfig}
\end{figure}
The new tunnel construction allows us to extend the sewing construction in \cite{BDS} and \cite{BS} to a more general setting. Basilio, Dodziuk, and Sormani \cite{BDS} used sewing manifolds to investigate the following question of Gromov which asks: What is the weakest notion of convergence such that a sequence of Riemannian manifolds, $M^n_j$ with scalar curvature $R^j\geq \kappa$ subconverges to a limit $M_\infty$ which may not be a manifold but has scalar curvature greater than $\kappa$ in some suitably generalized sense? They were able to show that when $\kappa=0$ there is a sequence of Riemannian manifolds with non-negative scalar curvature whose limit fails to have non-negative generalized scalar curvature where generalized scalar curvature is defined as
\begin{equation}\label{weakscal}
    wR(p_0):=\lim_{r\to 0} 6(n+2) \frac{\vol_{\bE^n}{B(0,r)}-\mathcal{H}^n(B(p_0,r))}{r^2\cdot\vol_{\bE^n}{B(0,r)} } \geq 0
\end{equation}
for the limit space.
\begin{remark}
For a Riemannian manifold $(M^n,g)$ with scalar curvature $R$, we see for all $p\in M^n$ that $wR(p)=R(p)$.
\end{remark}
We are able to provide a similar answer to Gromov's question for any $\kappa$. In particular, for any $\kappa$, there exists a sequence of increasingly tightly sewn manifolds all of which have scalar curvature greater than $\kappa$. Furthermore, this sequence of increasingly tightly sewn manifolds will converge in the $\mathcal{F}$-sense to a pulled metric space (see  \cite[Section 2]{BS} for discussion of such spaces) which fail to have generalized scalar curvature greater than or equal to $\kappa$ at the pulled point.
\begin{mainthm}\label{sewingthm}
 There exists a sequence of manifolds $M_j^n=(M^n,g_j)$ with scalar curvature $R^j\geq \kappa-\frac{1}{j}$ which converges in the $\mathcal{F}$-sense to a metric space $M_\infty$. Moreover, there is a point $p_0\in M_\infty$ such that
\begin{equation}
    wR(p_0):=\lim_{r\to 0} 6(n+2) \frac{\vol_{\bE^n}{B(0,r)}-\mathcal{H}^n(B(p_0,r))}{r^2\cdot\vol_{\bE^n}{B(0,r)} }=-\infty
\end{equation}
\end{mainthm}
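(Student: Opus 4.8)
The plan is to realize $M_\infty$ as the metric space obtained from a round sphere by collapsing a geodesic segment to a point, and to approximate it by sewing in the scalar-curvature-controlled tunnels of \Cref{constr}.

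\textbf{Step 1 (base manifold and set-up).} I take $M^n=\bS^n$ with a round metric $g_0$ rescaled so that its constant scalar curvature is at least $\kappa$; this is possible for every $\kappa\in\bR$ (if $\kappa\le 0$ the unit round metric works, and if $\kappa>0$ take radius $\sqrt{n(n-1)/\kappa}$). Fix a minimizing geodesic segment $\gamma\colon[0,L]\to(\bS^n,g_0)$ with $L>0$.

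\textbf{Step 2 (sewing with controlled tunnels).} For each $j$ choose $N_j\to\infty$ and points $x_0^j,\dots,x_{N_j}^j$ subdividing $\gamma$ into arcs of length $L/N_j$. Using the tunnel construction of \Cref{constr}, attach for each $i$ a tunnel joining a tiny ball about $x_{i-1}^j$ to a tiny ball about $x_i^j$, arranging that: (a) the modifications (removed balls together with tunnel bodies) are carried out in pairwise disjoint regions — e.g. split each $x_i^j$ into two nearby points carrying the two adjacent tunnels; (b) each tunnel and the resulting metric have scalar curvature $\ge\kappa-\eps_{i,j}$ with $\eps_{i,j}<1/j$; (c) each tunnel has diameter $\le\delta_{i,j}$ and volume $\le v_{i,j}$, where the removed-ball radii $r_{i,j}$, the $\delta_{i,j}$ and the $v_{i,j}$ are chosen so small that $\sum_i(\delta_{i,j}+r_{i,j})\to 0$ and $\sum_i(v_{i,j}+r_{i,j}^n)\to 0$ as $j\to\infty$. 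Because the modifications are disjoint, the resulting manifold $M_j^n$ has $R^j\ge\kappa-\max_i\eps_{i,j}\ge\kappa-\tfrac1j$; it is diffeomorphic to $\bS^n\#^{N_j}(S^1\times S^{n-1})$, and has bounded volume and diameter.

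\textbf{Step 3 (identifying the limit and proving convergence).} Let $(M_\infty,\bar d)$ be the quotient of $(\bS^n,g_0)$ obtained by collapsing $\gamma$ to a single point $p_0$, with the quotient length metric. One checks that $\bar d$ is a genuine metric, that it agrees locally with $g_0$ on $M_\infty\setminus\{p_0\}$, and that $\bar d(q,p_0)=d_{g_0}(q,\gamma)$ for $q\notin\gamma$; in particular $\mathcal H^n$ on $M_\infty\setminus\{p_0\}$ equals $\vol_{g_0}$. Each shortcut gives $d_{g_j}(x_{i-1}^j,x_i^j)\le\delta_{i,j}+2r_{i,j}$, so the chain $x_0^j,\dots,x_{N_j}^j$ has $g_j$-diameter $\le\sum_i(\delta_{i,j}+2r_{i,j})\to 0$; since the $x_i^j$ are $(L/N_j)$-dense in $\gamma$ and removing the tiny balls distorts distances by $o(1)$ while the tunnels only shorten, the whole of $\gamma$ collapses in the limit and distances between points away from $\gamma$ change by $o(1)$. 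Hence $(M_j^n,d_{g_j})\to(M_\infty,\bar d)$ in the Gromov--Hausdorff sense. Combined with $\vol(M_j^n)\to\vol_{g_0}(\bS^n)=\mathcal H^n(M_\infty)$ (Step 2) and lower semicontinuity of mass, the intrinsic flat distance estimates of Sormani--Wenger, applied exactly as in \cite{BDS,SW}, upgrade this to $M_j^n\xrightarrow{\mathcal F}M_\infty$, where $M_\infty$ carries the integral current of $(\bS^n\setminus\gamma,g_0)$.

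\textbf{Step 4 (computing $wR(p_0)$).} For $q\notin\gamma$ we have $\bar d(q,p_0)=d_{g_0}(q,\gamma)$, so the ball $B(p_0,r)\subset M_\infty$ coincides, up to a $g_0$-null set, with the $g_0$-tube $T_r(\gamma)=\{q:d_{g_0}(q,\gamma)<r\}$. For small $r$ the tube formula gives $\mathcal H^n(B(p_0,r))=\vol_{g_0}(T_r(\gamma))=\omega_{n-1}Lr^{n-1}\bigl(1+O(r)\bigr)$, where $\omega_{n-1}=\vol_{\bE^{n-1}}(B(0,1))$. Hence $\mathcal H^n(B(p_0,r))/\vol_{\bE^n}(B(0,r))=\tfrac{\omega_{n-1}L}{\omega_n r}(1+O(r))\to+\infty$, and
\[
6(n+2)\,\frac{\vol_{\bE^n}B(0,r)-\mathcal H^n(B(p_0,r))}{r^2\,\vol_{\bE^n}B(0,r)}\ \le\ 6(n+2)\Bigl(\tfrac{1}{r^2}-\tfrac{\omega_{n-1}L}{\omega_n r^3}(1+O(r))\Bigr)\ \xrightarrow[r\to 0]{}\ -\infty,
\]
so the limit defining $wR(p_0)$ equals $-\infty$.

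\textbf{Main obstacle.} The heart of the argument is Steps 2--3: one must invoke \Cref{constr} simultaneously for $N_j\to\infty$ necks clustered along $\gamma$ with \emph{pairwise disjoint} supports (so the scalar-curvature loss does not accumulate) while driving the total added volume, the total removed volume and the total shortcut length to zero — this is precisely what forces $\gamma$, and nothing more, to collapse, and thereby pins down $M_\infty$. Granting this, the passage from Gromov--Hausdorff plus volume convergence to $\mathcal F$-convergence is routine (as in \cite{BDS,SW}), and the $wR(p_0)=-\infty$ computation in Step 4 is an immediate consequence of a positive-length segment having collapsed to $p_0$.
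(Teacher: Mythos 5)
Your proposal follows essentially the same path as the paper: sew along a positive-codimension set in a round sphere using the $\kappa-\eps$ tunnels of \Cref{propT}, identify the $\mathcal F$-limit as the space with that set pulled to a point, and compute that balls about the pulled point have volume $\sim r^{n-m}$, which forces $wR(p_0)=-\infty$. The only real differences are cosmetic: you specialize to a one-dimensional set (a geodesic segment) rather than the paper's general embedded $\Sigma^m$, $1\le m\le n-1$, and you outline the sewing and Gromov--Hausdorff/intrinsic-flat convergence directly instead of routing through the paper's own generalized sewing propositions (\Cref{sewprop1}, \Cref{sewprop2}, \Cref{sewthm}), which package the argument of \cite{BDS,BS} and are exactly what make ``GH plus volume convergence'' yield $\mathcal F$-convergence here via explicit fillings rather than by any general principle.
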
 
Lastly, the new tunnel construction allows us to generalize the construction of Basilio, Kazaras, and Sormani \cite{BKS}. They use long thin tunnels with positive scalar curvature to construct a sequence of manifolds that converges in the $\mathcal{F}$-sense to a space with no geodesics. Similarly, for any $\kappa>0$, we are able to construct a sequence of manifolds with scalar curvature bounded below by $\kappa$ whose limit is not a geodesic space.
\begin{mainthm}\label{nogeothm}
There is a sequence of closed, oriented, Riemannian manifolds $(M^n_j,g_j)$, $n\geq 3,$ with scalar curvature $R^j> \kappa>0$ such that the corresponding integral current spaces converge in the intrinsic flat sense to 
$$
M_\infty = \left(N, d_{\bE^{n+1}}, \int_{N}\right),
$$
where $N$ is the round $n$-sphere of curvature $\frac{2\kappa}{n(n-1)}$ and $d_{\bE^{n+1}}$ is the Euclidean distance induced from the standard embedding of $N$  into $\bE^{n+1}$. Furthermore, $M_\infty$ is not locally geodesic.
\end{mainthm}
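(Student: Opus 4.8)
The plan is to build the sequence by attaching one long thin Gromov--Lawson tunnel (in the enhanced, scalar-curvature-controlled form proved in \Cref{constr}) between two copies of the round $n$-sphere $N$ of curvature $\tfrac{2\kappa}{n(n-1)}$, following the strategy of Basilio--Kazaras--Sormani \cite{BKS} but tracking the scalar curvature carefully. Concretely, fix $\kappa>0$ and let $N=\bS^n\big(\sqrt{n(n-1)/(2\kappa)}\big)$, so $R_N\equiv \kappa$... actually to get strict inequality I will start from a slightly overcurved sphere: choose spheres $N_j$ with constant scalar curvature $\kappa+\tfrac{2}{j}$, remove two small geodesic balls, and glue in a tunnel $T_j$ using the enhanced construction so that the resulting closed oriented manifold $M^n_j$ has $R^j>\kappa$ everywhere, with the tunnel having length $L_j\to\infty$ and the ``neck'' cross-sections having area $\to 0$, while $\diam(M^n_j)$ and $\vol(M^n_j)$ remain uniformly bounded (the tunnel contributes bounded volume because it is thin, and bounded diameter because — although intrinsically long — points on opposite sheets are at distance at most $\mathrm{diam}(N_j)+\ell$ for a fixed gluing length once we do the BKS-style ``pulling'', or simply because we do not need the diameter bounded, only that the currents converge).

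The key steps, in order: (1) Invoke the tunnel proposition of \Cref{constr} to produce, for each $j$, a tunnel metric on $[0,L_j]\times \bS^{n-1}$ interpolating between the boundary spheres of the two excised balls, with scalar curvature $>\kappa$, cross-sectional volumes summing to $o(1)$, and total volume $o(1)$; set $M^n_j = (N_j\setminus(B_1\cup B_2)) \cup T_j$. (2) Identify the intrinsic-flat limit: show $M^n_j \xrightarrow{\mathcal F} M_\infty=(N,d_{\bE^{n+1}},\int_N)$ where $d_{\bE^{n+1}}$ is the \emph{extrinsic} Euclidean distance from the standard embedding. The mechanism is that of \cite{BKS} and the sewing examples \cite{BDS},\cite{BS}: because the tunnel becomes arbitrarily thin, the currents push the two excised disks together so that in the limit the two boundary spheres of each $N_j$ are glued to a single point, but — crucially — the resulting limit metric on the glued space is the length metric, and one identifies this glued-at-a-point space together with the ``pulled'' identifications along the collapsing tunnel; as in BKS the effect of infinitely many (here: one increasingly pathological) thin tunnel is to collapse the intrinsic metric toward the chordal/extrinsic metric $d_{\bE^{n+1}}$ on $N$. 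I would make this precise by constructing explicit filling currents / an explicit integral current isometric embedding of all $M^n_j$ and $M_\infty$ into a common metric space (e.g. into $N\times[0,1]$ with a suitably squashed product metric, à la \cite{SW}, \cite{BKS}), estimating the flat distance between the pushforward currents by the volume of the filling region, which is controlled by $\vol(T_j)+\vol(\text{collar})\to 0$. (3) Verify the mass/weight: $\mathbf M(M^n_j)=\vol(M^n_j)=\vol(N_j)-o(1)\to \vol(N)=\mathbf M(\int_N)$, and check the limit is a genuine integral current space with the stated current structure $\int_N$, so no cancellation/degeneration occurs. (4) Prove $M_\infty$ is not locally geodesic: because $d_{\bE^{n+1}}$ is the \emph{restricted Euclidean} distance and not the round (length) distance on $N$, for any two distinct points $p,q\in N$ the straight Euclidean chord between them leaves $N$, so no minimizing geodesic of $(N,d_{\bE^{n+1}})$ lies in $N$; more precisely, given $p\in N$ one shows that for points $q$ near $p$ there is no curve in $N$ realizing $d_{\bE^{n+1}}(p,q)$ (the length metric of $(N,d_{\bE^{n+1}})$ is the round metric, which is strictly larger), hence no geodesic segment through $p$ exists in any neighborhood — exactly as in \cite{BKS}.

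The main obstacle is Step (2): correctly identifying the intrinsic flat limit and in particular pinning down that the limiting distance function is the \emph{extrinsic} $d_{\bE^{n+1}}$ rather than, say, the round metric on $N$ or some other quotient. This requires the right ambient embedding in which to compute flat distances and a careful bookkeeping of how the thin tunnel forces the two points of $N_j$ at the ``gluing'' to coincide while simultaneously, across the family, shortcutting chords — this is where the geometry of \cite{BKS} genuinely enters and where one must be careful that the enhanced tunnels from \Cref{constr} (which are geometrically different from the positive-scalar-curvature tunnels of \cite{BKS}, being only scalar-curvature-$>\kappa$ and possibly fatter in the middle) still collapse in the required way; one needs their cross-sectional areas and total volumes to go to zero, which is exactly what \Cref{constr} guarantees, but the diameter/length bookkeeping and the filling-current estimate must be redone rather than quoted. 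A secondary technical point is ensuring strict scalar curvature inequality $R^j>\kappa$ (not merely $\ge \kappa-\tfrac1j$) uniformly on the glued manifold, including across the two gluing collars; the overcurved-sphere trick above absorbs the $\eps_j$ loss from \Cref{constr}, but one must check the collar regions where the tunnel metric is interpolated into $N_j$ also keep $R>\kappa$, which again is the content of the enhanced construction.
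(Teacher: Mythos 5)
Your proposal has a fundamental error in the underlying construction. You propose connecting \emph{two} copies of the round sphere by a \emph{single} thin tunnel and letting the tunnel collapse. That setup is the one used in Theorems~\ref{minALL}/\ref{minAMV}, and its intrinsic flat limit is the \emph{disjoint union of two round spheres} with their round (length) distances — not a single sphere carrying the restricted Euclidean distance $d_{\bE^{n+1}}$. One tunnel, however thin, cannot ``collapse the intrinsic metric toward the chordal metric on $N$'': to shortcut the length distance between two given points $p,q\in N$ all the way down to $|p-q|_{\bE^{n+1}}$ you must glue a short tunnel directly between (neighborhoods of) $p$ and $q$, and to do this for all pairs you need \emph{increasingly many} tunnels. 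That is the pipe-filling technique of \cite{BKS}: one round sphere is decorated with a growing, dense collection of short thin tunnels whose intrinsic lengths approximate the corresponding Euclidean chord lengths, so the intrinsic distances on $M^n_j$ decrease to $d_{\bE^{n+1}}$. The paper's proof is exactly this: run \cite[Theorem 3.1]{BKS} verbatim, but replace the positive-scalar-curvature Gromov--Lawson tunnels with the enhanced tunnels of Proposition~\ref{propT}, which lose only $\varepsilon_j\to 0$ of scalar curvature. Your proposal's later discussion of the limit identification and the non-geodesic conclusion (that the length metric of $(N,d_{\bE^{n+1}})$ is strictly larger than $d_{\bE^{n+1}}$, so no chord is realized by a rectifiable curve in $N$) is correct, but it is attached to the wrong underlying sequence.

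A secondary issue: the round sphere $N$ with sectional curvature $\tfrac{2\kappa}{n(n-1)}$ already has constant scalar curvature $2\kappa$, not $\kappa$ as you computed. So the ``overcurved sphere'' workaround is unnecessary. One starts with $N$ itself (scalar curvature $2\kappa$), applies Proposition~\ref{propT} with a loss small compared to $\kappa$, and obtains $R^j \geq 2\kappa\bigl(1-\tfrac1{10j}\bigr) > \kappa$ as in the paper. If you instead used spheres with scalar curvature $\kappa+\tfrac{2}{j}\to\kappa$, your limit would be a sphere of the wrong curvature, $\tfrac{\kappa}{n(n-1)}$ rather than $\tfrac{2\kappa}{n(n-1)}$, contradicting the statement.
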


\begin{table}[H]
\definecolor{lg}{gray}{0.9}
\newcolumntype{g}{>{\columncolor{lg}}c}
    \centerline{
        \begin{tabular}{|c|g|g|g|g|}
       
    \hline
        \rowcolor{white}
        &
        \textsc{Properties of}&
        {\textsc{Property of the}}&
        {\textsc{Type of}}&
        {\textsc{Does}}\\
        \rowcolor{white}
        &
        {\textsc{$M_j$}}&
        {\textsc{limit, $M_\infty$}}&
        {\textsc{convergence}}&
        {\textsc{$\mathrm{MinA}_j\to0$?}}\\
       
    \hline
         && \text{Shows necessity of}&&\\
         &&\text{$\mathrm{MinA}$ lower bound}&&\\
         \multirow{-3}{*}{\text{\Cref{minALL}}}&\multirow{-3}{*}{\text{$R^j>n(n-1)-\frac{1}{j}$}}&\text{in \Cref{conjLL}.}&\multirow{-3}{*}{\text{$\mathcal{VF}, \mathcal{F}$}}&\multirow{-3}{*}{\text{Yes}}\\
    \hline
         \rowcolor{white}
         &\text{$R^j>6-\frac{1}{j}$}& \text{Counterexample to}&&\\
          \rowcolor{white}
         \multirow{-2}{*}{\text{\Cref{minAMV}}}&\text{$\mathrm{width}(M_j)\geq 4\pi$}&\text{\Cref{conjnMV}.}&\multirow{-2}{*}{\text{$\mathcal{VF}, \mathcal{F}$}}&\multirow{-2}{*}{\text{Yes}}\\

     \hline
        && \text{No $\mathrm{GH}$-convergent}&&\\
        \multirow{-2}{*}{\text{\Cref{spikesLL}}}&\multirow{-2}{*}{\text{$R^j>n(n-1)-\frac{1}{j}$}}&\text{subsequence.}&\multirow{-2}{*}{\text{$\mathrm{VADB}, \mathcal{VF}, \mathcal{F}$}}&\multirow{-2}{*}{\text{?}}\\

    \hline
        \rowcolor{white}
         &\text{$R^j>6-\frac{1}{j}$}& \text{No $\mathrm{GH}$-convergent}&&\\
          \rowcolor{white}
         \multirow{-2}{*}{\text{\Cref{spikesMV}}}&\text{$\mathrm{width}(M_j)\geq 4\pi$}&\text{subsequence.}&\multirow{-2}{*}{\text{$\mathrm{VADB}, \mathcal{VF}, \mathcal{F}$}}&\multirow{-2}{*}{\text{?}}\\

      \hline
         \Cref{noncompact}&\text{$R^j>\kappa>0$}& \text{Not precompact.}& $\mathcal{VF}, \mathcal{F}$& {?} \\
        
    \hline
       \rowcolor{white}
         && \text{Generalized Scalar}&&\\
        \rowcolor{white}
         &\multirow{-2}{*}{\text{$R^j>\kappa$}}&\text{curvature is negative}&&\\
         \rowcolor{white}
         \multirow{-3}{*}{\Cref{sewingthm}}&\multirow{-2}{*}{\scriptsize\text{($R^j>0$, \cite{BDS})}}&\text{infinity at a point.}&\multirow{-3}{*}{\text{$\mathcal{F}$}}&\multirow{-3}{*}{\text{Yes}}\\
       
    \hline
         &\text{$R^j>\kappa>0$}& \text{No two points are}&&\\
         \multirow{-2}{*}{\text{\Cref{nogeothm}}}&\text{\scriptsize\text{($R^j>0$, \cite{BKS})}}&\text{connected by a geodesic.}&\multirow{-2}{*}{\text{$\mathcal{F}$}}&\multirow{-2}{*}{\text{Yes}}\\

    \hline
        \end{tabular}
    }
    
    \caption{Here we summarize the examples constructed in this paper.}
    \label{thmtable}
\end{table}

The paper is structured as follows. In \Cref{bg}, the background is discussed including some definitions and theorems related to different notions of convergence for Riemannian manifolds. In \Cref{constr}, we prove our main construction propositions: \Cref{propW} (\nameref{propW}) and \Cref{propT} (\nameref{propT}). In \Cref{theoremsA}, we use \Cref{propT} to prove Theorems \ref{minALL} and \ref{minAMV}. In \Cref{theoremsB}, we prove Theorems \ref{spikesLL},  \ref{spikesMV}, and \ref{noncompact} using \Cref{propW}. Finally, in Sections \ref{sewing} and \ref{nogeo}, we discuss how the construction propositions can be used to generalize the construction of sewing manifolds and the construction of sequences of smooth manifolds whose limit does not have any geodesics.

\section{Acknowledgements}
The author would like to thank Marcus Khuri and Raanan Schul for their invaluable guidance and encouragement throughout the process of producing this result. The author would also like to thank Christina Sormani for her helpful discussions and  the suggestion to construct examples related to Llarull's rigidity theorem. The author gratefully acknowledges support from the Simons Center for Geometry and Physics, Stony Brook University, at which some of the research for this paper was performed. This work was supported in part by NSF Grant DMS-2104229 and NSF Grant DMS-2154613.
\section{Background}\label{bg}

In this section, we will review different types of convergences between two Riemannian manifolds.

\subsection{Gromov-Hausdorff convergence} Here we will review the Gromov-Hausdorff distance between two metric spaces. Gromov  defined this distance between two metric spaces by generalizing the concept of Hausdorff distance between two subsets of a metric space. We refer the reader to \cite{Gr1} for further details.

The Gromov-Hausdorff distance between two metric spaces $(X_1,d_1)$ and $(X_2,d_2)$ is
\[
d_{\mathrm{GH}}((X_1,d_1),(X_2,d_2))=\inf_{Z} \{d_H^Z(\phi_1(X_1),\phi_2(X_2))\}
\]
where the infimum is taken over all complete metric spaces $(Z,d^Z)$ and all distance preserving maps $\phi_i:X_i\to Z $. We say that a metric spaces $(X_j,d_j)$ converge in the $\mathrm{GH}$-sense to a metric space $(X_\infty,d_\infty)$ if
\[
d_{\mathrm{GH}}((X_j,d_j),(X_\infty,d_\infty)) \to 0
\]
If, in addition, $\mu_j$ and $\mu_\infty$ are measures on $X_j$ and $X_\infty$, respectively, then Fukaya \cite{F} introduced the notion of metric measure convergence for metric measure spaces. We say $(X_j,d_j,\mu_j)$ converges to a metric measure space $(X_\infty,d_\infty,\mu_\infty)$ in metric measure ($\mathrm{mGH}$) sense if we have convergence in the $\mathrm{GH}$-sense and
\[
\phi_{j*}\mu_j\to \phi_{\infty *}\mu_\infty \text{ weakly as measures in } Z.
\]
We note that both define a distance between two Riemannian manifolds since there is a natural distance function and natural measure associated with a Riemannian manifold $(M,g)$. 

Gromov, in the following theorem, characterizes when a sequence of compact metric spaces contains a subsequence that converges in the $\mathrm{GH}$-sense.
\begin{theorem}\label{GH}
For a sequence of compact metric spaces $(X_j,d_j)$  such that $\diam{(X_j)}<D<\infty$, the following are equivalent:
\begin{enumerate}
    \item There exists a convergent subsequence.
    \item There is a function $N_1:(0,\alpha)\to (0,\infty)$ such that $Cap_j(\eps)\leq N_1(\eps)$
    \item  There is a function $N_2:(0,\alpha)\to (0,\infty)$ such that $Cov_j(\eps)\leq N_2(\eps)$,
    where 
    \[
    \text{Cap}_j(\eps)= \text{ maximum number of disjoint } \frac{\eps}{2}\text{-balls in } X_j,
    \]
     \[
    \text{Cov}_j(\eps)= \text{ minimum number of  } \eps\text{-balls it takes to cover } X_j.
    \]
\end{enumerate}
\end{theorem}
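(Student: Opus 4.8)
\textbf{Proof proposal for \Cref{GH}.} The plan is to prove the chain of implications $(1)\Rightarrow(2)\Rightarrow(3)\Rightarrow(1)$, which is the standard approach to a totally-bounded-type compactness criterion, with the only subtlety being that we work up to isometric embedding rather than inside a fixed ambient space. Throughout I fix $\alpha = D$ (or any convenient upper bound for the diameters), so that the capacity and covering functions are defined on $(0,\alpha)$.

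First I would record the elementary comparison between capacities and coverings that makes $(2)$ and $(3)$ essentially interchangeable: if $\{B(x_i,\eps/2)\}_{i=1}^{m}$ is a maximal disjoint collection of $\eps/2$-balls in $X_j$, then by maximality the balls $\{B(x_i,\eps)\}$ cover $X_j$, so $\mathrm{Cov}_j(\eps)\le \mathrm{Cap}_j(\eps)$; conversely, a single $\eps/2$-ball cannot contain two points that are centers of disjoint $\eps/2$-balls in a way that... more carefully, if $\{B(y_k,\eps/2)\}$ is a minimal $\eps/2$-cover and $\{B(x_i,\eps/2)\}$ are disjoint, each $x_i$ lies in some $B(y_k,\eps/2)$ and no two $x_i$ share the same $k$ (two points in a common $\eps/2$-ball are within distance $\eps$, hence their $\eps/2$-balls... ) — the clean statement is $\mathrm{Cap}_j(2\eps)\le \mathrm{Cov}_j(\eps)$. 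Hence $(2)\Leftrightarrow(3)$ after a harmless rescaling of the argument of $N_1,N_2$, and it suffices to prove $(1)\Rightarrow(2)$ and $(3)\Rightarrow(1)$.

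For $(1)\Rightarrow(2)$: suppose the conclusion fails, so there is $\eps_0>0$ with $\mathrm{Cap}_{j}(\eps_0)\to\infty$ along a subsequence. A uniformly convergent subsequence $X_{j_k}\xrightarrow{\mathrm{GH}} X_\infty$ would force, for $k$ large, a $\mathrm{GH}$-approximation of distortion $<\eps_0/10$ between $X_{j_k}$ and the fixed compact space $X_\infty$; since a compact space has finite capacity $\mathrm{Cap}_\infty(\eps_0/2)=:C_0<\infty$, pulling a maximal disjoint family of $\eps_0$-balls back through the approximation produces more than $C_0$ nearly-disjoint $(\eps_0/2)$-separated points in $X_\infty$, a contradiction. (I would phrase this directly via $\eps$-nets: $X_{j_k}$ admits a $\delta$-net of size $\le \mathrm{Cov}_\infty(\delta/2)$ for all small $\delta$, uniformly in $k$, which bounds $\mathrm{Cap}_{j_k}$.) For $(3)\Rightarrow(1)$, the heart of the matter: given the uniform covering bound, for each $m$ choose in each $X_j$ a finite $2^{-m}$-net $S_j^m$ of cardinality $\le N_2(2^{-m})=:K_m$, with $S_j^m\subseteq S_j^{m+1}$ arranged by a greedy/inductive choice. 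Using a diagonal argument over $m$ together with the boundedness of the finitely many pairwise distances within $S_j^m$ (all lie in $[0,D]$), pass to a subsequence along which, for every $m$ and every pair of indices in the nets, the distances converge; the limit data assemble into a countable metric space whose completion $X_\infty$ is compact (it is totally bounded by construction, with $2^{-m}$-net of size $\le K_m$). Finally one checks $d_{\mathrm{GH}}(X_{j},X_\infty)\to 0$ by embedding $X_j$ and $X_\infty$ into $\ell^\infty$ of the (countable) limit index set, or more concretely by exhibiting explicit $\eps$-isometries built from the matched nets.

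The main obstacle is the bookkeeping in $(3)\Rightarrow(1)$: making the nested nets $S_j^m$ genuinely compatible across $j$ and $m$ so that the diagonal extraction yields a single coherent limit pseudo-metric, and then verifying that the $\mathrm{GH}$-distances actually go to zero rather than merely that a limit object exists. I expect to handle this by the classical device of isometrically embedding everything into a common Banach space (e.g.\ $\ell^\infty(\bN)$ via Kuratowski-type embeddings keyed to a fixed enumeration of $\bigcup_m S_j^m$), reducing $\mathrm{GH}$-convergence to Hausdorff convergence of uniformly bounded subsets there, where Blaschke-type compactness is routine. All other steps are soft point-set topology.
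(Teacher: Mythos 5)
The paper does not prove \Cref{GH}: it is Gromov's classical precompactness criterion for the Gromov--Hausdorff topology, recalled in the background section and used only later in the contrapositive. There is therefore no proof of the paper's to compare yours against, and the only question is whether your sketch correctly proves Gromov's theorem. Your handling of $(2)\Leftrightarrow(3)$ (packing versus covering, with the factor-of-two bookkeeping you flag yourself) and of $(3)\Rightarrow(1)$ (nested $2^{-m}$-nets of cardinality $\le N_2(2^{-m})$, a diagonal extraction on the finitely many distance arrays, passage to a totally bounded completion, and a Kuratowski-type embedding to upgrade the abstract limit to genuine $\mathrm{GH}$-convergence) is the standard route and is fine as a sketch.

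What does not survive scrutiny is $(1)\Rightarrow(2)$, and the defect is already in the statement as quoted, not only in your method. With $(1)$ read literally as ``there exists a convergent subsequence,'' the implication $(1)\Rightarrow(2)$ is false: take $X_{2k-1}$ to be a single point and $X_{2k}$ to be $2^k$ points at mutual distance $1$; then $\diam(X_j)\le 1<D$ and the odd subsequence converges, so $(1)$ holds, yet $\mathrm{Cap}_{2k}(1)=2^k\to\infty$, so $(2)$ fails. Your written argument reproduces exactly this loophole: you extract a subsequence along which $\mathrm{Cap}_{j}(\eps_0)$ blows up and separately invoke ``a uniformly convergent subsequence,'' with nothing forcing the two to meet. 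Gromov's criterion is correctly stated as an equivalence between \emph{precompactness} of the family (every subsequence has a $\mathrm{GH}$-convergent sub-subsequence) and the uniform capacity/covering bound; under that reading your idea repairs in one line, since given $\mathrm{Cap}_{j_k}(\eps_0)\to\infty$ precompactness lets you pass to a further $\mathrm{GH}$-convergent sub-subsequence, and comparing with the finite capacity of the limit space gives the contradiction. That corrected form is also all the paper actually uses (the contrapositive, at the end of the proof of \Cref{mwmw1}), so nothing downstream is affected; but you should not claim $(1)\Rightarrow(2)$ for the literal statement.
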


\subsection{Intrinsic Flat Convergence} In this section we will review Sormani-Wenger intrinsic flat distance between two integral current spaces. Sormani and Wenger \cite{SW} defined intrinsic flat distance, which generalizes the notion of flat distance for currents in Euclidean space. To do so they used  Ambrosio and Kirchheim's generalization of Federer and Fleming's integral currents to metric spaces. We refer the reader to \cite{AK} for further details about currents in arbitrary metric spaces and to \cite{SW} for further details about integral current spaces and intrinsic flat distance.

Let $(Z,d^Z)$ be a complete metric space. Denote by $\text{Lip}(Z)$ and $\text{Lip}_b(Z)$ the set of real-valued Lipschitz functions on $Z$ and the set of bounded real-valued Lipschitz functions on $Z$. 
\begin{deff}[\cite{AK}, Definition 3.1]
We say a multilinear functional 
\[
T:\text{Lip}_b(Z)\times [\text{Lip}(Z)]^m\to \bR
\]
on a complete metric space $(Z,d)$ is an $m$-dimensional current if it satisfies the following properties.
\begin{enumerate}
    \item Locality: $T(f,\pi_1,\ldots,\pi_m)=0$ if there exists and $i$ such that $\pi_i$ is constant on a neighborhood of $\{f\neq 0\}$.
    \item Continuity: $T$ is continuous with respect to pointwise convergence of $\pi_i$ such that $\text{Lip}(\pi_i)\leq 1$.
    \item Finite mass: there exists a finite Borel measure $\mu$ on $X$ such that 
    \begin{equation} \label{fm}
        |T(f,\pi_1,\ldots,\pi_m)|\leq \prod_{i=1}^m \text{Lip}(\pi_i) \int_Z |f| d\mu
    \end{equation}
    for any $(f,\pi_1,\ldots,\pi_m)$.
 \end{enumerate}
\end{deff}
We call the minimal measure satisfying (\ref{fm}) the mass measure of $T$ and denote it $||T||$. We can now define many concepts related to a current. $\mathbf{M}(T)=||T||(Z)$ is defined to be the mass of $T$ and the canonical set of a $m$-current $T$ on $Z$ is 
\[
\text{set}(T)=\left\{p\in Z \text{ }\Big|\text{ } \liminf_{r\to 0} \frac{||T||(B(p,r))}{r^m}>0\right\}.
\]
The boundary of a current $T$ is defined as $\partial T:\text{Lip}_b(X)\times [\text{Lip}(X)]^{m-1}\to \bR$, where
\[
\partial T(f,\pi_1,\ldots,\pi_{m-1})=T(1,f,\pi_1,\ldots,\pi_{m-1}).
\]
Given a Lipschitz map $\phi:Z\to Z'$, we can pushforward a current $T$ on $Z$ to a current $\phi_\# T$ on $Z'$ by defining
\[
\phi_{\#} T(f,\pi_1,\ldots,\pi_m) = T(f\circ \phi,f\circ \pi_1,\ldots,f\circ \pi_m).
\]
A standard example of an $m$-current on $Z$ is given by
\[
\phi_\#[[\theta]](f,\pi_1,\ldots,\pi_m) =\int_A (\theta \circ \phi) (f\circ \phi)d(\pi_1\circ \phi)\wedge \cdots \wedge d(\pi_m\circ \phi),
\]
where $\phi:\bR^m\to Z$ is bi-Lipschitz and $\theta\in L^1(A,\bZ)$.
We say that an $m$-current on $Z$ is integer rectifiable if there is a countable collection of bi-Lipschitz maps $\phi_i:A_i\to X$ where $A_i\subset \bR^m$ is precompact Borel measurable with pairwise disjoint images and weight functions $\theta_i\in L^1(A_i,\bZ)$ such that 
\[
T=\sum_{i=1}^\infty \phi_{i\#} [[\theta_i]]. 
\]
Moreover, we say an integer rectifiable current whose boundary is also integer rectifiable is an integral current. We denote the space of integral $m$-currents on $Z$ as $\mathbf{I}_m(Z)$. The flat distance between two integral currents $T_1$, $T_2\in\mathbf{I}(Z)$ is
\[
d^Z_F(T_1,T_2) = \inf\{\mathbf{M}(U)+\mathbf{M}(V)\mid U\in\mathbf{I}_m(X), V\in\mathbf{I}_{m+1}(X), T_2-T_1=U+\partial V\}.
\]

We say that the triple $(X,d,T)$ is an integral current space if $(X,d)$ is a metric space, $T\in\mathbf{I}_m(\bar{X})$ where $\bar{X}$ is the completion of $X$, and $\text{set}(T)=X$. The intrinsic flat $(\mathcal{F})$ distance between two integral current spaces $(X_1,d_1,T_1)$ and $(X_2,d_2,T_2)$ is
\[
d_\mathcal{F}((X_1,d_1,T_1),(X_2,d_2,T_2)) = \inf_Z\{d_F^Z(\phi_{1\#}T_1,\phi_{2\#}T_2)\}
\]
where the infimum is taken over all complete metric spaces $(Z,d^Z)$ and isometric embeddings $\phi_1:(\bar{X}_1,d_1)\to (Z,d^Z)$ and $\phi_2:(\bar{X}_2,d_2)\to (Z,d^Z)$. We note that if $(X_1,d_1,T_1)$ and $(X_2,d_2,T_2)$ are precompact integral current spaces such that 
\[
d_\mathcal{F}((X_1,d_1,T_1),(X_2,d_2,T_2))=0
\]
then there is a current preserving isometry between $(X_1,d_1,T_1)$ and $(X_2,d_2,T_2)$, i.e., there exists an isometry $f:X_1\to X_2$ whose extension $\bar{f}:\bar{X}_1\to \bar{X}_2$ pushes forward the current: $\bar{f}_\#T_1=T_2$. We say a sequence of $(X_j,d_j,T_j)$ precompact integral current spaces converges to $(X_\infty,d_\infty,T_\infty)$ in the $\mathcal{F}$-sense if
\[
d_\mathcal{F}((X_j,d_j,T_j),(X_\infty,d_\infty,T_\infty))\to 0.
\]
If, in addition, $\mathbf{M}(T_i)\to \mathbf{M}(T_\infty)$, then we say $(X_j,d_j,T_j)$ converges to $(X_\infty,d_\infty,T_\infty)$ in the voulme preserving intrinsic flat $(\mathcal{VF})$ sense. We note that we can view compact Riemannian manifolds $(M^n,g)$ as precompact integral current spaces $(M^n,d_g,\int_{M^n} dvol_g)$, where $d_g$ is the natural distance function on the Riemannian manifold and integration over the manifold, $\int_{M^n} dvol_g$, can be viewed as an integral current. Moreover, $\mathbf{M}(M^n)=\vol{(M^n)}$. Lakzian and Sormani in \cite{LaS} were able to estimate the intrinsic distance between two diffeomorphic manifolds:
\begin{theorem}\label{thm4.6}
Suppose $M^n_1=(M^n,g_1)$ and $M^n_2=(M^n,g_2)$ are oriented precompact Riemannian manifolds with diffeomorphic subregions $U_j\subset M^n_j$ and diffeomorphisms $\psi_j:U\to U_j$ such that for all $v\in TU$ we have
\[
\frac{1}{(1+\eps)^2} \psi^*_1g_1(v,v) < \psi^*_2g_2(v,v) <(1+\eps)^2\psi^*_1g_1(v,v).
\]
We define the following quantities 
\begin{enumerate}
    \item $ D_{U_j} = \sup \{\diam_{M_j}{(W)}: W \text{ is a component of } U_j\}$.
    \item Define $a$ to be a number such that $a>\frac{\arccos({1+\eps})^{-1}}{\pi} \max\{D_{U_1},D_{U_2}\}$.
    \item $\lambda = \sup_{x,y\in U}|d_{M_1}\left(\psi_1(x),\psi_1(y)\right) -d_{M_2}\left(\psi_2(x),\psi_2(y)\right)|$.
    \item $h=\sqrt{\lambda\left(\max\{D_{U_1},D_{U_2}\}+\frac{\lambda}{4}\right)}$.
    \item $\bar{h}=\max\left\{h,\sqrt{\eps^2+2\eps}D_{U_1},\sqrt{\eps^2+2\eps}D_{U_2}\right\}.$
\end{enumerate}

Then the intrinsic flat distance between $M^n_1$ and $M^n_2$ is bounded:
\begin{align*}
    d_\mathcal{F}(M_1,M_2) &\leq \left(2\bar{h}+a\right)\left(\vol_m(U_1) + \vol_m(U_2) + \vol_{m-1}(\partial U_1)+\vol_{m-1}(\partial U_2)\right)\\
    &\qquad + \vol_m(M_1\setminus U_1) +\vol_m(M_2\setminus U_2).
\end{align*}
\end{theorem}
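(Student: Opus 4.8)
The plan is to realize $M^n_1$ and $M^n_2$ simultaneously as isometrically embedded integral current spaces inside a single complete metric space $Z$, and then to exhibit an explicit flat decomposition $\phi_{2\#}[[M_2]]-\phi_{1\#}[[M_1]]=A+\partial B$ in $Z$ with small masses; since by definition $d_\mathcal{F}(M_1,M_2)\le d^Z_F(\phi_{1\#}[[M_1]],\phi_{2\#}[[M_2]])\le\mathbf{M}(A)+\mathbf{M}(B)$, bounding these masses yields the stated inequality. This is the Lakzian--Sormani ``filling cylinder'' argument, and essentially all of the content lies in the choice of $Z$.

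First I would build $Z$. Put $L=2\bar h+a$ and let $Z=\big(M_1\sqcup(U\times[0,L])\sqcup M_2\big)/\!\sim$, where $U\times\{0\}$ is glued to $U_1\subset M_1$ via $\psi_1$ and $U\times\{L\}$ to $U_2\subset M_2$ via $\psi_2$. On the cylinder I would use a metric of the form $G_t+dt^2$, where $G_t$ is a family of metrics on $U$ interpolating between $\psi_1^*g_1$ and $\psi_2^*g_2$: equal to $\psi_1^*g_1$ on $[0,\bar h]$, equal to $\psi_2^*g_2$ on $[\bar h+a,L]$, and carrying out the bi-Lipschitz interpolation across the middle band $[\bar h,\bar h+a]$. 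The crux is to verify that the induced length metric $d_Z$ restricts to $d_{M_1}$ on $M_1$ and to $d_{M_2}$ on $M_2$, i.e. that no curve dipping into the cylinder beats a curve staying inside a single $M_i$. A curve that leaves $M_1$, climbs to height $t$, traverses, and returns pays at least $2t$ from the $dt^2$ term; weighing this against the at most $\lambda$ that could be saved because $d_{M_1}$ and $d_{M_2}$ disagree on the overlap forces the collar height $\bar h\ge h=\sqrt{\lambda(\max\{D_{U_1},D_{U_2}\}+\lambda/4)}$ (the classical estimate for gluing two spaces along a strip so that both embed isometrically), weighing it against the up-to-$(1+\eps)$ bi-Lipschitz distortion of the two metrics on $U$ forces $\bar h\ge\sqrt{\eps^2+2\eps}\,D_{U_j}$, and ensuring that a minimizing geodesic of $G_t+dt^2$ never detours through the interpolating band forces the band width $a>\pi^{-1}\arccos((1+\eps)^{-1})\max\{D_{U_1},D_{U_2}\}$. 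Granting this, the inclusions $\phi_i\colon M_i\hookrightarrow Z$ are isometric embeddings and $\phi_{i\#}[[M_i]]$ are the corresponding integral currents.

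Next I would write down the decomposition. Let $\iota\colon U\times[0,L]\to Z$ be the inclusion, and set $B:=\iota_\#[[U\times[0,L]]]\in\mathbf{I}_{n+1}(Z)$ and $S:=\iota_\#[[\partial U\times[0,L]]]$. With compatible orientations the boundary formula gives $\partial B=\phi_{2\#}[[U_2]]-\phi_{1\#}[[U_1]]+S$, so taking $A:=\phi_{2\#}[[M_2\setminus U_2]]-\phi_{1\#}[[M_1\setminus U_1]]-S$ we get $A+\partial B=\phi_{2\#}[[M_2]]-\phi_{1\#}[[M_1]]$. Now estimate masses from the cylinder metric: because the cylinder has ``height'' $L$ one has $\mathbf{M}(B)\le L(\vol_n(U_1)+\vol_n(U_2))$ and $\mathbf{M}(S)\le L(\vol_{n-1}(\partial U_1)+\vol_{n-1}(\partial U_2))$, whereas $\mathbf{M}(\phi_{i\#}[[M_i\setminus U_i]])=\vol_n(M_i\setminus U_i)$. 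Summing, $\mathbf{M}(A)+\mathbf{M}(B)\le\vol_n(M_1\setminus U_1)+\vol_n(M_2\setminus U_2)+L\big(\vol_n(U_1)+\vol_n(U_2)+\vol_{n-1}(\partial U_1)+\vol_{n-1}(\partial U_2)\big)$, which with $L=2\bar h+a$ is exactly the asserted bound.

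The main obstacle is the second paragraph: one has to design the interpolating metric on $U\times[0,L]$ so that all three constraints hold at once --- no shortcut across to $M_2$, no shortcut through the bi-Lipschitz band, and agreement of $d_Z$ with $d_{M_i}$ despite $U_i$ possibly being highly non-convex in $M_i$ --- and it is precisely the accounting of these three competing effects that forces the three contributions $h$, $\sqrt{\eps^2+2\eps}\,D_{U_j}$, and $a$ into $L$. Once $Z$ is in hand, the boundary identity and the mass estimates are routine.
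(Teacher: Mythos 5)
The paper does not prove this statement: it quotes it verbatim as a background theorem from Lakzian and Sormani \cite{LaS}, so there is no in-paper proof to compare your attempt against. Judged on its own, your sketch correctly identifies the strategy used in \cite{LaS} --- embed $M_1$ and $M_2$ isometrically into a common gluing space $Z$ built by inserting a ``filling cylinder'' over $U$, exhibit the flat decomposition $\phi_{2\#}[[M_2]]-\phi_{1\#}[[M_1]]=A+\partial B$ where $B$ is the cylinder current and $S$ its lateral boundary, and bound the relevant masses. Your third paragraph (the current algebra and mass accounting) is correct and, as you say, routine once $Z$ is in hand.

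The genuine gap is in the second paragraph, and you flag it yourself. The metric you write on the cylinder, a product $G_t+dt^2$ interpolating between $\psi_1^*g_1$ and $\psi_2^*g_2$, does not deliver the distance-preserving inclusions and, more tellingly, cannot be the source of the three constants $h$, $\sqrt{\eps^2+2\eps}\,D_{U_j}$, and $\arccos\bigl((1+\eps)^{-1}\bigr)D/\pi$. With a product metric a detour to height $t$ costs $2t$ on the vertical legs while the horizontal length shrinks by at most a factor $(1+\eps)^{-1}$, so the constraint you would extract is \emph{linear} in $\eps D$, not a square root, and $\arccos$ has no reason to appear. Those two expressions come from spherical trigonometry: in \cite{LaS} (building on Sormani--Wenger's filling lemmas) one builds a warped, hemisphere-shaped pipe in which $(U,\psi_1^*g_1)$ sits as one latitude and a conformal rescaling as another, and the width of the band and the height of the hemisphere are precisely the arc length $\frac{D}{\pi}\arccos\bigl((1+\eps)^{-1}\bigr)$ and the chord/height $\sqrt{\eps^2+2\eps}\,D$ separating those latitudes on a round sphere of circumference $2D$. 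The contribution $h=\sqrt{\lambda(\max D+\lambda/4)}$ comes from a separate sub-lemma about filling between two metric spaces that agree up to $\lambda$ on an overlap. To turn your outline into a proof you would need to reproduce that warped construction (or show an alternative metric on the cylinder achieves the same three bounds), not write down a straight product; as it stands, the construction you describe does not match the constants in the conclusion.
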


Moreover, Sormani \cite{S3} proves the following Arzela-Ascoli theorem in the setting of $\mathcal{F}$-convergence.

\begin{theorem} \label{thm6.1}
Fix $L>0$. Suppose $M_j=(X_j,d_j,T_j)$ are integral current spaces for \\$j\in\{1,2,\ldots,\infty\}$ and $M_j\xrightarrow{\mathcal{F}}M_\infty$ and $F_j:X_j\to W$ are $L$-Lipschitz maps into a compact metric space $W$, then a subsequence converges to an $L$-Lipschitz map $F_\infty:X_\infty\to W$. Specifically, there exists isometric embeddings of the subsequence $\phi_j:X_j\to Z,$ such that $d_F^Z(\phi_{j\#}T_j,\phi_{\infty\#}T_\infty)\to 0$  and for any sequence $p_j\in X_j$ converging to $p\in X_\infty$,
\[
d_Z(\phi_j(p_j),\phi_\infty(p))\to 0,
\]
one has converging images
\[
d_W(F_j(p_j),F_\infty(p))\to 0.
\]
\end{theorem}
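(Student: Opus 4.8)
The plan is to push the problem onto a single fixed ambient space on which ordinary Arzel\`a--Ascoli applies, the only genuinely geometric input being that points of the limit current space are metric limits of points of the converging spaces. First I would invoke the construction underlying the intrinsic flat distance (Sormani--Wenger \cite{SW}): the hypothesis $M_j\xrightarrow{\mathcal F}M_\infty$ yields a complete separable metric space $Z$ together with isometric embeddings $\phi_j\colon\bar X_j\hookrightarrow Z$ for all $j\in\{1,2,\dots,\infty\}$ such that $d_F^Z(\phi_{j\#}T_j,\phi_{\infty\#}T_\infty)\to 0$. This is the ambient space asserted in the statement; restricting $\phi_j$ to $X_j$ and identifying $X_j$ with $\phi_j(X_j)\subset Z$, the map $F_j$ becomes an $L$-Lipschitz map from a subset $X_j\subset Z$ into $W$. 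Since $W$-valued Lipschitz maps need not admit $W$-valued Lipschitz extensions, I would embed $W$ isometrically into its injective hull $\widehat W$, which is again a compact metric space and, being an injective metric space, receives $L$-Lipschitz extensions; thus each $F_j$ extends to an $L$-Lipschitz $\widehat F_j\colon Z\to\widehat W$ with $\widehat F_j|_{X_j}=F_j$. (Equivalently one may post-compose with the Kuratowski embedding into $\ell^\infty$ and extend coordinatewise by McShane.)

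The family $\{\widehat F_j\}$ is uniformly $L$-Lipschitz, hence equicontinuous, and takes values in the compact space $\widehat W$; as $Z$ is separable, a diagonal extraction over a countable dense subset of $Z$ followed by the usual equicontinuity upgrade produces a subsequence (not relabeled) and an $L$-Lipschitz map $\widehat F_\infty\colon Z\to\widehat W$ with $\widehat F_j\to\widehat F_\infty$ uniformly on compact subsets of $Z$. The remaining content is the claim that every $q\in X_\infty=\text{set}(\phi_{\infty\#}T_\infty)$ satisfies $d_Z\bigl(q,\text{set}(\phi_{j\#}T_j)\bigr)\to 0$. I would argue this by contradiction: if along a subsequence $\|\phi_{j\#}T_j\|\bigl(B(q,r_0)\bigr)=0$ for some $r_0>0$, then $(\phi_{j\#}T_j)\llcorner B(q,r_0)=0$, and slicing $\phi_{\infty\#}T_\infty$ by the distance to $q$ at a suitable radius $\rho<r_0$, combined with a flat filling $\phi_{j\#}T_j-\phi_{\infty\#}T_\infty=A_j+\partial B_j$ with $\mathbf M(A_j)+\mathbf M(B_j)\to 0$ (Ambrosio--Kirchheim slicing \cite{AK}, using that the slices of $B_j$ have mass $\to 0$ for a.e.\ $\rho$), exhibits $(\phi_{\infty\#}T_\infty)\llcorner B(q,\rho)$ as the sum of a boundary of mass $\to 0$ and a current of mass $\to 0$; hence $(\phi_{\infty\#}T_\infty)\llcorner B(q,\rho)=0$, contradicting $q\in\text{set}(\phi_{\infty\#}T_\infty)$ for $\rho$ small.

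Granting the claim, the limit map is produced as follows. For $p\in X_\infty$ pick $p_j\in X_j$ with $\phi_j(p_j)\to\phi_\infty(p)$ in $Z$; then $\widehat F_\infty(\phi_\infty(p))=\lim_j\widehat F_j(\phi_j(p_j))=\lim_j F_j(p_j)$, which lies in $W$ because $W$ is closed in $\widehat W$. Hence $F_\infty:=\widehat F_\infty|_{X_\infty}$ is a $W$-valued $L$-Lipschitz map $X_\infty\to W$. For the convergence of images, given any $p_j\in X_j$ with $\phi_j(p_j)\to\phi_\infty(p)$ one estimates
\[
d_W\bigl(F_j(p_j),F_\infty(p)\bigr)\le d_{\widehat W}\bigl(\widehat F_j(\phi_j(p_j)),\widehat F_\infty(\phi_j(p_j))\bigr)+d_{\widehat W}\bigl(\widehat F_\infty(\phi_j(p_j)),\widehat F_\infty(\phi_\infty(p))\bigr),
\]
where the first term tends to $0$ by uniform convergence of $\widehat F_j$ on the compact set $\{\phi_j(p_j):j\}\cup\{\phi_\infty(p)\}$ and the second by continuity of $\widehat F_\infty$; this is exactly the asserted conclusion.

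The step I expect to be the main obstacle is the approximation claim of the second paragraph: that a point of the limit integral current space is a metric limit of points of the approximating ones. This is the only place where the integral-current content of $\mathcal F$-convergence --- as opposed to mere convergence of abstract metric spaces --- is used, and it is what forces the slicing argument; the rest of the proof is soft functional analysis. One should also keep in mind that Step~1, the existence of a single common ambient $Z$, is itself the nontrivial theorem of Sormani--Wenger, which I would quote rather than reprove.
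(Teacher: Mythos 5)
The paper does not itself prove this theorem: it is imported from Sormani's intrinsic flat Arzel\`a--Ascoli paper (cited as \cite{S3}), so there is no in-paper proof to compare against. Your overall architecture --- common ambient $Z$ furnished by the Sormani--Wenger construction, Lipschitz extension of the $F_j$ to all of $Z$ (via an injective target or Kuratowski plus McShane), classical Arzel\`a--Ascoli on the separable space $Z$ for an equicontinuous family into a compact target, and then a point-approximation lemma to restrict the limit map to $X_\infty$ --- is faithful in content to Sormani's argument and arguably packages it more cleanly; Sormani builds $F_\infty$ pointwise on $X_\infty$ rather than extending first, but the mathematical ingredients (common $Z$, approximation of points of $X_\infty$ by points of $X_j$, Lipschitz rigidity, compactness of $W$) are the same.

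There is, however, a real gap in the final step of your slicing argument. After choosing $\rho$ along a diagonal so that the slice mass of $B_j$ at radius $\rho$ tends to $0$, what the slicing identity actually yields is $(\phi_{\infty\#}T_\infty)\llcorner B(q,\rho) = E_j + \partial C_j$ with $\mathbf{M}(E_j)\to 0$ and $\mathbf{M}(C_j)\to 0$, i.e.\ $d_F\bigl((\phi_{\infty\#}T_\infty)\llcorner B(q,\rho),0\bigr)=0$. Concluding that this restricted current is therefore $0$ is not an identity to be read off --- it is the nondegeneracy of the flat distance on integral currents, a theorem in its own right; and the phrase ``boundary of mass $\to 0$'' is not literally correct, since $\mathbf{M}\bigl(\partial(B_j\llcorner U)\bigr)$ is in general bounded but not small (only $\mathbf{M}(B_j\llcorner U)$ is). The clean fix, which also makes the slicing unnecessary, is lower semicontinuity of mass under weak convergence: flat convergence of integral currents with uniformly bounded mass and boundary mass implies weak convergence, hence $\|\phi_{\infty\#}T_\infty\|\bigl(B(q,r_0)\bigr)\le\liminf_j\|\phi_{j\#}T_j\|\bigl(B(q,r_0)\bigr)=0$, which contradicts $\liminf_{r\to 0}\|\phi_{\infty\#}T_\infty\|(B(q,r))/r^m>0$ for $q\in\mathrm{set}(\phi_{\infty\#}T_\infty)$. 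With that replacement the remainder of your proof goes through.
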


\subsection{Volume above distance below convergence} Allen, Perales, and Sormani in \cite{APS} introduced a new notion of convergence of manifolds called volume above distance below ($\mathrm{VADB}$) convergence. It is based on the volume-distance rigidity theorem which states that if there is a $C^1$-diffeomorphism $F:M\to N$ between two Riemannian manifolds which is also distance non-increasing then $\vol{(N)} \leq \vol{(M)}$; moreover, in case of equality the manifolds are isometric. 
\begin{deff}
A sequence of Riemannian manifolds without boundary $M^n_j=(M^n,g_j)$ converge in the $\mathrm{VADB}$-sense to a Riemannian manifold $M^n_\infty=(M^n,g_\infty)$ if 
\begin{enumerate}
    \item $\vol{(M^n_j)}\to \vol{(M^n_\infty)}$.
    \item $\diam{(M^n_j)}\leq D$.
    \item There exists a $C^1$-diffeomorphisms $\Psi_j:M^n_\infty \to M^n_j$ such that for all $p,q\in M^n_\infty$ we have 
    \[
    d_j(\Psi_j(p),\Psi_j(q)) \geq d_\infty (p,q).
    \]
\end{enumerate}
\end{deff}
We also record the following lemma from \cite{APS} which says that the above condition on the distance functions in the definition of $\mathrm{VADB}$-convergence can be converted into a condition on Riemannian metrics.
\begin{lemma}
Let $M^n_1=(M^n,g_1)$ and $M^n_0=(M^n,g_0)$ be Riemannian manifolds and $F:M^n_1\to M^n_0$ be a $C^1$-diffeomorphism. Then 
\[
g_0(dF(v),dF(v)) \leq g_1(v,v) \qquad \text{for all } v\in TM^n_1
\]
if and only if
\[
d_0(F(p),F(q)) \leq d_1(p,q) \qquad \text{for all } p,q\in M^n_1.
\]
\end{lemma}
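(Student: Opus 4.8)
The plan is to prove the two implications separately, in both cases going through the standard bridge between a Riemannian metric and its induced distance, namely the length functional, together with the first-order behaviour of distance along a smooth curve.

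For the forward direction I would assume the pointwise bound $g_0(dF(v),dF(v))\le g_1(v,v)$ for all $v\in TM^n_1$ and push curves forward under $F$. Given $p,q\in M^n_1$ and a piecewise-$C^1$ path $\gamma$ from $p$ to $q$, the image $F\circ\gamma$ is a piecewise-$C^1$ path from $F(p)$ to $F(q)$ with $(F\circ\gamma)'(t)=dF_{\gamma(t)}(\gamma'(t))$ wherever $\gamma$ is differentiable, so integrating the pointwise bound gives $L_{g_0}(F\circ\gamma)\le L_{g_1}(\gamma)$. Since $F$ is a diffeomorphism, $\gamma\mapsto F\circ\gamma$ is a bijection from paths $p\to q$ onto paths $F(p)\to F(q)$; taking the infimum over all such paths yields $d_0(F(p),F(q))\le d_1(p,q)$. (If $M^n$ is disconnected one argues component by component.)

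For the reverse direction I would assume $d_0(F(p),F(q))\le d_1(p,q)$ for all $p,q$ and argue infinitesimally. Fix $p\in M^n_1$ and $v\in T_pM^n_1$, and let $\gamma$ be the $g_1$-geodesic (or any $C^1$ curve) with $\gamma(0)=p$, $\gamma'(0)=v$. The crucial ingredient is the first-order length formula $d_h(\sigma(0),\sigma(t))=|\sigma'(0)|_h\,t+o(t)$ as $t\downarrow 0$, valid for any $C^1$ curve $\sigma$ in a smooth Riemannian manifold $(N,h)$. Applied to $\gamma$ in $(M^n_1,g_1)$ it gives $d_1(\gamma(t),p)=|v|_{g_1}\,t+o(t)$; applied to the $C^1$ curve $F\circ\gamma$ in $(M^n_0,g_0)$, whose initial velocity is $dF_p(v)$, it gives $d_0(F(\gamma(t)),F(p))=|dF_p(v)|_{g_0}\,t+o(t)$. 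Dividing the hypothesis $d_0(F(\gamma(t)),F(p))\le d_1(\gamma(t),p)$ by $t>0$ and letting $t\downarrow 0$ yields $|dF_p(v)|_{g_0}\le|v|_{g_1}$; squaring gives the desired pointwise inequality, and since $p$ and $v$ were arbitrary this finishes the proof.

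The only step carrying real content is the first-order length formula, so that is where I would be careful. Its upper half is immediate, since $d_h(\sigma(0),\sigma(t))\le L_h(\sigma|_{[0,t]})$ and $s\mapsto|\sigma'(s)|_h$ is continuous. For the lower half I would pass to $h$-geodesic normal coordinates centred at $\sigma(0)$, in which $d_h(\sigma(0),x)=|x|_{h_{\sigma(0)}}$ exactly for small $x$, while $\sigma(t)=t\sigma'(0)+o(t)$ in these coordinates, so $d_h(\sigma(0),\sigma(t))=|t\sigma'(0)+o(t)|_{h_{\sigma(0)}}=|\sigma'(0)|_h\,t+o(t)$; equivalently one compares $h$ on a small metric ball to the constant-coefficient metric $h_{\sigma(0)}$ up to a factor $1\pm\epsilon$. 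I would also note that $F$ being merely $C^1$ is enough, since it only guarantees that $F\circ\gamma$ is $C^1$ with $(F\circ\gamma)'(0)=dF_p(v)$, which is all the formula requires, and that the entire argument is local, so no completeness or compactness hypothesis enters.
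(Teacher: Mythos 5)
The paper does not prove this lemma; it is imported verbatim from Allen--Perales--Sormani \cite{APS}, so there is no internal proof to compare against. That said, your argument is correct and is essentially the standard one. The forward direction via $L_{g_0}(F\circ\gamma)\le L_{g_1}(\gamma)$ together with the observation that $F$ induces a bijection on admissible paths is exactly right. For the reverse direction the key fact $d_h(\sigma(0),\sigma(t))=|\sigma'(0)|_h\,t+o(t)$ for a $C^1$ curve in a smooth Riemannian manifold is the correct bridge, and your justification is sound. One small remark: in geodesic normal coordinates centred at $\sigma(0)$ the identity $d_h(\sigma(0),x)=|x|$ holds exactly within the injectivity radius, so that single observation, combined with $\sigma(t)=t\sigma'(0)+o(t)$ in those coordinates, already delivers the two-sided expansion; you do not need to argue the upper and lower halves separately. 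You are also right that the regularity requirement on $F$ is just $C^1$, since all that is used is that $F\circ\gamma$ is $C^1$ with initial velocity $dF_p(v)$, and that the argument is purely local, so no completeness or compactness of the manifolds enters.
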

Finally, we record the following theorem from \cite{APS} which describes the relationship between $\mathrm{VADB}$-convergence and $\mathcal{VF}$-convergence.
\begin{theorem}\label{VADB}
 If $M^n_j=(M^n,g_j)$ and $M^n_\infty=(M^n ,g_\infty)$ are compact oriented Riemannian manifolds such that $M^n_j \xrightarrow{\mathrm{VADB}} M^n_\infty$ then $M^n_j \xrightarrow{\mathcal{VF}} M^n_\infty$.
\end{theorem}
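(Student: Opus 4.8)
The definition of $\mathcal{VF}$-convergence requires $\mathcal{F}$-convergence together with $\mathbf{M}(T_j)\to\mathbf{M}(T_\infty)$; since the mass of a closed oriented Riemannian manifold equals its volume, the second requirement is exactly condition (1) in the definition of $\mathrm{VADB}$-convergence. So the plan is to prove only $d_{\mathcal F}(M^n_j,M^n_\infty)\to 0$. Let $J_j\colon M^n_\infty\to[0,\infty)$ be the Jacobian of $\Psi_j\colon(M^n_\infty,g_\infty)\to(M^n_j,g_j)$, i.e. $\Psi_j^*(dvol_{g_j})=J_j\,dvol_{g_\infty}$. Applying the recorded lemma to the distance-nonincreasing map $\Psi_j^{-1}$ gives $g_\infty\le\Psi_j^*g_j$ pointwise on $M^n_\infty$; in particular $d_{g_\infty}(x,y)\le d_{g_j}(\Psi_j(x),\Psi_j(y))$ for all $x,y$, the diameters of both manifolds are $\le D$, and $J_j\ge 1$. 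By the change-of-variables formula $\int_{M^n_\infty}J_j\,dvol_{g_\infty}=\vol(M^n_j)$, so condition (1) gives $\int_{M^n_\infty}(J_j-1)\,dvol_{g_\infty}=\vol(M^n_j)-\vol(M^n_\infty)\to 0$.

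Next I would fix a slowly decaying sequence $\eps_j\downarrow 0$ (say $\eps_j=\big(\int_{M^n_\infty}(J_j-1)\,dvol_{g_\infty}\big)^{1/2}$) and split $M^n_\infty=G_j\cup B_j$ with $B_j:=\{J_j>1+\eps_j\}$. By Markov's inequality $\vol_{g_\infty}(B_j)\le\eps_j^{-1}\int_{M^n_\infty}(J_j-1)\,dvol_{g_\infty}\to 0$, and $\vol_{g_j}(\Psi_j(B_j))=\int_{B_j}J_j\,dvol_{g_\infty}\le\int_{M^n_\infty}(J_j-1)\,dvol_{g_\infty}+\vol_{g_\infty}(B_j)\to 0$. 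Writing $\mu^j_1\le\cdots\le\mu^j_n$ for the eigenvalues of $\Psi_j^*g_j$ relative to $g_\infty$, one has $\mu^j_i\ge 1$ and $J_j=\big(\prod_i\mu^j_i\big)^{1/2}\ge(\mu^j_n)^{1/2}$, so on $G_j$ every $\mu^j_i$ lies in $[1,(1+\eps_j)^2]$; that is, $g_\infty$ and $\Psi_j^*g_j$ are $(1+\eps_j)$-biLipschitz on $G_j$. Replacing $1+\eps_j$ by a nearby regular value of a smoothing of $J_j$ (Sard), we may take $\partial G_j$ to be a $C^1$ hypersurface, so $G_j$ is an admissible subregion in \Cref{thm4.6}.

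The core step — and the one I expect to be the main obstacle — is to show that the distance-distortion quantity attached to $G_j$ in \Cref{thm4.6} tends to $0$: $\lambda_j:=\sup_{x,y\in G_j}\big|d_{g_\infty}(x,y)-d_{g_j}(\Psi_j(x),\Psi_j(y))\big|\to 0$. One inequality is automatic from $g_\infty\le\Psi_j^*g_j$. For the other, given $x,y\in G_j$ take a $g_\infty$-minimizing geodesic $\gamma$; then $d_{g_j}(\Psi_j(x),\Psi_j(y))\le\ell_{g_j}(\Psi_j\circ\gamma)=\ell_{\Psi_j^*g_j}(\gamma)$, and I would modify $\gamma$, across each connected piece of $B_j$ it meets, either by a short detour through $G_j$ around the portions of $B_j$ of large distortion (which, by Markov, have vanishing volume and hence can be enclosed in $g_\infty$-balls of vanishing radius) or by crossing a mildly distorted portion directly; on $G_j$ one has $\ell_{\Psi_j^*g_j}\le(1+\eps_j)\ell_{g_\infty}$, while the extra $g_\infty$-length from detours and the $\Psi_j^*g_j$-cost of the crossings are controlled, via a coarea/packing estimate, by $\vol_{g_\infty}(B_j)$ and $\int_{M^n_\infty}(J_j-1)\,dvol_{g_\infty}$, both of which $\to 0$. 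This is precisely the point that must fail for $\mathrm{GH}$-convergence, where one is not allowed to discard the small-volume set $B_j$. A subsidiary difficulty is keeping $\vol_{n-1}(\partial G_j)$ from growing faster than the rate at which the constants $a_j,\bar h_j$ of \Cref{thm4.6} vanish; since $\partial G_j$ is a level set of $J_j$, the coarea formula lets one choose the (still free) sequence $\eps_j\to 0$ slowly enough that some level in $(1,1+\eps_j)$ has small $(n-1)$-volume, and a standard diagonal argument then closes the loop.

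Finally I would apply \Cref{thm4.6} with $M_1=M^n_\infty$, $M_2=M^n_j$, $U=G_j$, $\psi_1=\mathrm{id}$, $\psi_2=\Psi_j$, and $\eps=\eps_j$: the biLipschitz hypothesis holds by the second paragraph, the distortion is $\lambda_j\to 0$ by the third, the diameters are $\le D$, and the leftover terms $\vol_{g_\infty}(B_j)$ and $\vol_{g_j}(\Psi_j(B_j))$ tend to $0$. Since $a_j$ and $\bar h_j$ depend only on $\eps_j$, $\lambda_j$ and $D$ (hence vanish), while $\vol(G_j,g_\infty)\le\vol(M^n_\infty)$ and $\vol(\Psi_j(G_j),g_j)\le V$ stay bounded, and the $(n-1)$-volumes of $\partial G_j$ and $\Psi_j(\partial G_j)$ are both $\lesssim\vol_{g_\infty}(\partial G_j)$ (the normal directions are $(1+\eps_j)$-biLipschitz there), the estimate in \Cref{thm4.6} gives $d_{\mathcal F}(M^n_\infty,M^n_j)\to 0$. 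Combined with the volume hypothesis, this is exactly $M^n_j\xrightarrow{\mathcal{VF}}M^n_\infty$, as claimed.
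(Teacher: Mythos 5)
First, note that the paper does not prove \Cref{VADB}: it is recorded from \cite{APS}, so the only proof to compare with is the external one, and your sketch does not reproduce it. Your reductions are fine — the recorded lemma upgrades the distance hypothesis to $g_\infty\le\Psi_j^*g_j$ as tensors, hence $J_j\ge 1$, $\int_{M_\infty}(J_j-1)\,d\mathrm{vol}_{g_\infty}\to 0$, and Markov gives $\mathrm{vol}_{g_\infty}(B_j)\to 0$ — but the step you yourself flag as the core of the argument is genuinely false as stated. The sup of the distance distortion over the Jacobian-good set $G_j=\{J_j\le 1+\epsilon_j\}$ need not tend to zero. The paper's own well construction is a counterexample: in \Cref{mwmw1} wells are attached via \Cref{propW} with $d=\tfrac12$, so $g_j=dF_j^2+g_{rd}\ge g_{rd}$, volumes converge and diameters are bounded, i.e.\ the hypotheses of \Cref{VADB} hold with $\Psi_j=\mathrm{id}$; yet near the tip of each well $F_j$ is constant, so $J_j\equiv 1$ there and the tip lies in $G_j$ for every choice of $\epsilon_j$, while its $g_j$-distance to points just outside the well is at least about $d=\tfrac12$ and its $g_\infty$-distance to them is $O(\delta_j)$. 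Hence $\lambda_j\ge\tfrac12-o(1)$, so \Cref{thm4.6} applied with $U=G_j$ cannot force $d_{\mathcal F}\to 0$; the very sequence for which this paper invokes \Cref{VADB} defeats your argument.

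The proposed repair (detour around, or cheaply cross, the pieces of $B_j$ met by a $g_\infty$-geodesic) cannot close this gap, because small $g_\infty$-volume controls neither diameter nor topology of $B_j$: a long thin tube or sheet of vanishing volume can enclose part of $G_j$ or separate $x$ from $y$, so no detour through $G_j$ exists, and the $g_j$-cost of crossing is of the order of the length of the stretched region, which is not bounded by $\int(J_j-1)\,d\mathrm{vol}_{g_\infty}$ or $\mathrm{vol}_{g_\infty}(B_j)$ (in the well example the crossing costs $\approx d$ while both quantities are $O(\delta_j^{n-1})$); likewise the assertion that the badly distorted portions of $B_j$ "can be enclosed in $g_\infty$-balls of vanishing radius" does not follow from Markov. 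The underlying point is that distance distortion is nonlocal, so the exceptional set must be defined in terms of distances rather than pointwise Jacobian or eigenvalue bounds, and the real content of the theorem — which is where the proof in \cite{APS} does its work — is to show that the set of points whose distance functions are badly distorted has small measure and to estimate the flat distance by a construction adapted to that decomposition, not by the biLipschitz-subregion estimate of \Cref{thm4.6} applied to a level set of $J_j$. As written, your outline does not yield the theorem; any fix must change the definition of $G_j$ and abandon the claim $\lambda_j\to 0$ in its present form.
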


\section{Wells and Tunnels}\label{constr}
In this section, we prove the main new technical propositions: \Cref{propW} (\nameref{propW}) and \Cref{propT} (\nameref{propT}). These are an improvement of the constructions of Gromov-Lawson \cite{GL}, Basilio, Dodziuk, Sormani \cite{BDS}, and Dodziuk \cite{D}. We construct wells and tunnels and get control over the volume and diameter while keeping the scalar curvature close to the scalar curvature of the manifold to which we are attaching the well. \Cref{propW} (\nameref{propW}) allows us to remove a ball from a Riemannian manifold $M$ with scalar curvature $R^M\geq \kappa$ and glue in a well to create a new Riemannian manifold $N$; moreover, $M$ and $N$ will be isometric away from the gluing and the scalar curvature $R^N$ of $N$ will satisfy $R^N\geq \kappa-\eps$ for arbitrarily small $\eps$.  \Cref{propT} (\nameref{propT}) allows the analogous construction for connecting two manifolds with a tunnel. Therefore, given a Riemannian manifold $M$ with $R^M\geq \kappa$ we can remove two balls and glue in a tunnel to create a Riemannian manifold $P$ with $R^P\geq \kappa-\eps$ for arbitrarily small $\eps$.

\begin{prop}[Constructing Wells]\label{propW}
Let $(M^n,g)$, $n\geq 3$, be a Riemannian manifold with scalar curvature $R^M$. Let $\delta>0$ be small enough, $j\in \bN$, and $d>0$. If $R^M\geq \kappa$ on $B_g(p,2\delta)$ a ball in $(M,g)$, then we can construct a well $W_j=(B_g(p,2\delta),g_j)$ and a new complete Riemannian manifold $(N^n,h)$,
\[
N^n=M^n, \qquad h|_{M\setminus B_g(p,2\delta)}=g|_{M\setminus B_g(p,2\delta)},\qquad h|_{B_g(p,2\delta)}=g_j|_{B_g(p,2\delta)}.
\]
Furthermore, the following properties are satisfied:
\begin{enumerate}
    \item The scalar curvature, $R^{j}$, of ${W_j}$ satisfies $R^{j}>\kappa-\frac{1}{j}$.
    \item $ g_j|_E=g|_E$ where $E=B_g(p,2\delta)\setminus B_g(p,\delta)$ is identified with a subset of $W_j$.
    \item There exists constant $C>0$ independent of $j$ and $d$ such that
\[
\diam{({W_j})}<C(\delta +d) \quad \text{and} \quad \vol{({W_j})}<C(\delta^n+d\delta^{n-1}).
\]
\item N has scalar curvature $R^N>\kappa -\frac{1}{j}$.
\end{enumerate}
\end{prop}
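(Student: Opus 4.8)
The plan is to adapt the Gromov--Lawson bending construction in the precise quantitative form needed here. Recall that near the point $p$ we may take geodesic normal coordinates, so that on $B_g(p,2\delta)$ the metric is a small perturbation of the Euclidean metric; more precisely, writing $t$ for the radial coordinate, we have $g = dt^2 + t^2 g_{\bS^{n-1}} + O(t^2)$ corrections, and the scalar curvature is $R^M \geq \kappa$ throughout. The idea of Gromov--Lawson is to replace the metric $g$ on a small ball by the metric induced on a hypersurface in $(B_g(p,2\delta)\times \bR, g + dz^2)$ which is obtained by taking the ``graph'' of a carefully chosen curve $\gamma(t) = (t,z(t))$ in the $(t,z)$-half-plane: the curve starts out tangent to the $t$-axis (so the new metric agrees with $g$ near $\partial B_g(p,2\delta)$, giving property~(2)), then bends upward through a nearly-vertical arc, and finally transitions into a long thin cylindrical tube $\bS^{n-1}(r_0)\times[0,d]$ capped off by a spherical cap. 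First I would set up this curve $\gamma$ with a bending parameter and verify, via the standard second-fundamental-form computation for such rotationally-symmetric hypersurfaces, that the scalar curvature of the induced metric is bounded below in terms of $\kappa$, the principal curvatures of $\gamma$, and the curvature of $g$ on the ball. The key inequality (as in \cite{GL}, see also \cite{D}) expresses $R^{W_j}$ as $R^M$ evaluated at the base point, plus terms involving the geodesic curvature $k$ of $\gamma$ and the ``turning'': along the part where $\gamma$ is nearly vertical, the bending contributes a term of order $k/t$ which has a favorable sign (it is $+$ because we bend in the direction that makes the tube concave), so the only loss comes from a bounded number of controlled transition regions.

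The crucial new point compared to \cite{GL}, \cite{BDS}, \cite{D} is that we must make the loss in scalar curvature at most $1/j$ rather than merely ``keeping it positive''. The mechanism for this: the only place where the scalar curvature can drop below $\kappa$ is in the initial bending region where $\gamma$ turns from horizontal to vertical, and there the deficit is controlled by the maximum of (curvature of $\gamma$) times (something small). By choosing the ball radius $\delta$ small enough and performing the bend over a region where $t$ ranges over $[\delta,2\delta]$ (so $t$ is bounded below), and by taking the turn to happen slowly — i.e., with small geodesic curvature $k$ — we can make the negative contribution as small as we like, at the cost only of making the transition region (and hence the diameter) a bounded multiple of $\delta$. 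So the scheme is: fix the target deficit $1/j$; choose the maximal geodesic curvature of the bend small enough (depending on $j$, on $\kappa$, and on the $C^2$-size of $g$ on the fixed ball $B_g(p,2\delta)$) that the induced scalar curvature stays above $\kappa - 1/j$ throughout the bend; then continue into a straight cylinder of radius $r_0$ small (which has scalar curvature $(n-1)(n-2)/r_0^2 > 0 > \kappa - 1/j$ if $\kappa\le 0$, and one takes $r_0$ small enough to beat $\kappa$ if $\kappa>0$) of length $d$; finally cap with a hemisphere of radius $r_0$, again of large positive scalar curvature. This gives property~(1). Property~(4) is then immediate: $N$ is isometric to $M$ outside $B_g(p,2\delta)$ by construction, so $R^N = R^M \geq \kappa > \kappa - 1/j$ there, and $R^N = R^{W_j} > \kappa - 1/j$ on the glued-in region, and the gluing is smooth because the metrics and all derivatives match along $E$ by property~(2).

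For the diameter and volume bounds in property~(3): the glued-in well $W_j$ decomposes as (bending region) $\cup$ (cylinder of length $d$, radius $r_0$) $\cup$ (spherical cap of radius $r_0$). The bending region is contained in a set whose $g+dz^2$-diameter is $O(\delta)$ (the $t$-extent is $\le 2\delta$ and the $z$-extent picked up during the bend is also $O(\delta)$ since we keep geodesic curvature bounded on a region of size $\delta$), the cylinder has diameter $\le r_0\pi + d \le C(\delta + d)$ provided $r_0 \le \delta$, and the cap adds $O(r_0) = O(\delta)$; summing gives $\diam(W_j) < C(\delta + d)$ with $C$ independent of $j$ and $d$. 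For the volume: the bending region has volume $O(\delta^n)$ (it sits inside an $O(\delta)$-neighborhood in an $n$-manifold with bounded geometry), the cylinder $\bS^{n-1}(r_0)\times[0,d]$ has volume $\omega_{n-1}r_0^{n-1}d \le C d\,\delta^{n-1}$, and the cap has volume $O(r_0^n) = O(\delta^n)$; summing gives $\vol(W_j) < C(\delta^n + d\,\delta^{n-1})$. Throughout, $r_0$ should be chosen as a fixed small multiple of $\delta$ (small enough that the cylinder's scalar curvature beats $\kappa$, which only constrains $r_0$ from above in terms of $\kappa$, not in terms of $j$ or $d$), which is why $C$ can be taken independent of both $j$ and $d$.

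\textbf{Main obstacle.} The delicate part is the quantitative scalar-curvature estimate in the bending region: one must track, in the Gromov--Lawson hypersurface formula, exactly how the scalar curvature of the induced metric depends on the geodesic curvature profile of the bending curve and on the ambient curvature of $g$, and show that the single potentially-bad term can be made smaller than any prescribed $1/j$ by slowing the bend — while simultaneously not letting the bend take so long that the diameter constant $C$ acquires a $j$-dependence. The resolution is that bending ``slowly'' costs a factor in the $z$-height that is comparable to the (fixed) $t$-width $\delta$ of the annulus on which we bend — the relevant trade-off is between the curvature and the turning angle, and the total turning angle is a fixed $\pi/2$ regardless of $j$, so the region stays of size $O(\delta)$ — hence $C$ stays independent of $j$.
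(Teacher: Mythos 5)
Your framework matches the paper's: realize the well as a rotationally symmetric hypersurface in $\bR \times B$ determined by a bending curve $\gamma$, compute its scalar curvature via the Gauss equation, and append a thin cylinder of length $d$ and a cap. You also correctly locate the $\kappa - 1/j$ loss in the initial bend. But your quantitative mechanism is wrong in two ways that would break the proof. First, the sign: in the hypersurface scalar-curvature formula the term involving the geodesic curvature $k$ of $\gamma$ is $-(n-1)\bigl(\tfrac1r + O(r)\bigr)k\sin\theta$, and $k>0$ throughout the main bend, so this is the \emph{unfavorable} term; the favorable one is $(n-2)(n-1)\bigl(\tfrac1{r^2}+O(1)\bigr)\sin^2\theta$, coming from the principal curvatures of the spherical cross-sections, not from the bending of $\gamma$. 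Mixing up which term helps and which hurts would derail the estimate entirely.

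Second, and more seriously, ``take $k$ small throughout a total turn of $\pi/2$ and the region stays $O(\delta)$'' is geometrically self-contradictory: an arc of curvature $k$ turning by $\pi/2$ has extent $\sim 1/k$ in both coordinate directions, so a turn confined to an annulus of width $\delta$ forces $k \gtrsim 1/\delta$, which is \emph{large}, not small. The paper's actual resolution separates the two issues: on the very first arc take $k=1$ but run it only for an arclength $s_0$ so short that $\sin\theta(s)$, and hence the magnitude of $(n-1)k\sin\theta/r$, stays below $1/j$; once $\theta$ exceeds that small $\theta_0$, allow $k$ to \emph{grow} (inductively $k_i \sim \sin\theta_{i-1}/r_{i-1}$) so that $\tfrac{\sin\theta}{4r} > k$ holds, in which regime $R^{\Sigma} > \kappa$ outright because the $\sin^2\theta/r^2$ term dominates (\Cref{scalposlem}); the total arclength then sums to a geometric series $O(\delta+d)$ because $r$ contracts by a fixed factor at each step (\Cref{length1}, \Cref{length2}). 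Without this separation of ``where the $j$-dependence lives'' (only on the initial tiny arc) from ``where the good term carries the day'' (everything after), the constant $C$ cannot be made $j$-independent. You also omit the mollification of the piecewise-circular-arc curve to a $C^\infty$ curve, which requires checking that the curvature condition $\tfrac{\sin\theta}{4r}>k$ survives smoothing and that the tip can still be closed off smoothly (cf.\ \Cref{scalandlen} and \Cref{smoothattach}); this is a nontrivial technical component of the proof.
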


 \begin{prop}[Constructing Tunnels]\label{propT}
Let $(M^n,g)$, $n\geq 3$, be a Riemannian manifold 

with scalar curvature $R^M$. Let $\delta>0$ be small enough, $j\in \bN$, and $d\geq 0$. If $R^M\geq \kappa$ on two balls $B_g(p,2\delta)$ and $B_g(p',2\delta)$ in $(M^n,g)$, then we can construct a new complete Riemannian manifold $P^n$, where we remove two balls and glue cylindrical region $(T_j,g_j)$ diffeomorphic to $\bS^{n-1}\times[0,1]$,
\[
P^n=M^n\setminus\left(B_g(p,2\delta)\cup B_g(p',2\delta)\right)\sqcup T_j.
\]

Furthermore, the following properties are satisfied:
\begin{enumerate}
    \item The scalar curvature, $R^{j}$, of ${T_j}$ satisfies $R^{j}>\kappa-\frac{1}{j}$.
    \item $g_j|_E=g|_E$ and $g_j|_{E'}=g|_{E'}$ where $E=B_g(p,2\delta)\setminus B_g(p,\delta)$ and $E'=B_g(p',2\delta)\setminus B_g(p',\delta)$ are identified with subsets of $P$.
    \item There exists constant $C>0$ independent of $j$ and $d$ such that
\[
\diam{({T_j})}<C(\delta +d) \quad \text{and} \quad \vol{({T_j})}<C(\delta^n+d\delta^{n-1}).
\]
\item $P$ has scalar curvature $R^P>\kappa -\frac{1}{j}$.
\end{enumerate}
\end{prop}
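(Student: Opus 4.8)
The plan is to reduce the tunnel construction (\Cref{propT}) to the well construction (\Cref{propW}) together with a classical Gromov--Lawson--type argument on the product $\bS^{n-1}\times[0,1]$. First I would run the construction of \Cref{propW} at each of the two balls $B_g(p,2\delta)$ and $B_g(p',2\delta)$ independently: this replaces each ball by a ``well'' whose outgoing boundary, after the bending, is an intrinsically round sphere $\bS^{n-1}(r)$ of small radius $r$ (depending on $\delta$ and $j$) sitting at the end of a long thin neck isometric to $\bS^{n-1}(r)\times[0,d]$, and by construction the scalar curvature stays $>\kappa-\tfrac{1}{j}$ on each well while $g_j=g$ on the annuli $E$, $E'$. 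The point of doing this at both balls with the \emph{same} target radius $r$ is that the two wells now have isometric ``free'' boundary components $\bS^{n-1}(r)$, so they can be glued to each other along a third cylindrical piece $\bS^{n-1}(r)\times[0,\ell]$, which is a product metric and hence has scalar curvature exactly $R_{\bS^{n-1}(r)} = (n-1)(n-2)/r^2 > 0 > \kappa - \tfrac1j$ for $r$ small (here one uses $n\geq 3$, so $n-1\geq 2$ and the round factor has positive scalar curvature). Concatenating the two bent necks and the middle product cylinder gives the cylindrical region $T_j\cong \bS^{n-1}\times[0,1]$, and gluing $T_j$ into $M\setminus(B_g(p,2\delta)\cup B_g(p',2\delta))$ along $E$ and $E'$ produces $P$.

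Next I would verify the four listed properties. Property (2) is immediate since the well construction leaves the metric unchanged on $E$ and $E'$. Property (1), $R^j > \kappa - \tfrac1j$ on $T_j$, follows by splitting $T_j$ into the two bent necks (where the \Cref{propW} estimate applies verbatim) and the central product cylinder (where the scalar curvature is the positive constant $(n-1)(n-2)/r^2$); one must choose $r$ small enough, but $r$ can be taken to depend only on $\delta$ and $j$ exactly as in \Cref{propW}, and shrinking it only helps. Property (4), $R^P>\kappa-\tfrac1j$, then follows because $P$ is covered by $T_j$ and a region isometric to a subset of $(M,g)$, where $R^M\geq \kappa > \kappa-\tfrac1j$. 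For property (3), the diameter and volume bounds: each bent neck contributes $O(\delta+d)$ to the diameter and $O(\delta^n + d\,\delta^{n-1})$ to the volume by \Cref{propW}(3); the middle cylinder $\bS^{n-1}(r)\times[0,\ell]$ can be taken with $\ell$ of order $d$ (or even bounded) and $r \lesssim \delta$, contributing $O(\delta + d)$ and $O(d\,\delta^{n-1})$ respectively. Summing the three pieces and absorbing constants gives $\diam(T_j) < C(\delta+d)$ and $\vol(T_j) < C(\delta^n + d\,\delta^{n-1})$ with $C$ independent of $j$ and $d$. Completeness and the diffeomorphism type $T_j\cong\bS^{n-1}\times[0,1]$ are clear from the construction.

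The main obstacle is ensuring that the two wells really do terminate in \emph{isometric} round spheres with a prescribed common radius, so that the middle product cylinder can be inserted without introducing a metric mismatch (and hence a distributional curvature contribution) at the two new gluing loci. This is exactly the content of the Gromov--Lawson bending: one bends the hypersurface $\{r = \text{const}\}$ in the half-space model so that it becomes a cylinder $\bS^{n-1}(r_0)\times[0,d]$ for a chosen small $r_0$, and the curvature estimate of \Cref{propW} is precisely what guarantees this can be done while only decreasing the scalar curvature lower bound by $\tfrac1j$. Since $r_0$ is a free parameter in that construction, I would simply fix the \emph{same} $r_0$ for both balls; the curvature, diameter, and volume estimates are uniform in this choice as long as $r_0$ is small. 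A minor technical point to check is that the orientations match up so that $P$ is a smooth manifold (this is automatic since we are gluing along $\bS^{n-1}$-bundles with the product collar structure inherited from $E$, $E'$), and that if $d=0$ one still obtains a legitimate (short) tunnel — here the middle cylinder can be taken with $\ell>0$ fixed and small, which is why the hypothesis is $d\geq 0$ rather than $d>0$.
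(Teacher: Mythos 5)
Your overall architecture — bend symmetrically at both balls and insert a product cylinder in the middle — matches the paper, but there are two genuine gaps plus a small slip that would need to be repaired before this counts as a proof.

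\textbf{The output of \Cref{propW} is not the right building block.} A well produced by \Cref{propW} is diffeomorphic to the ball it replaces: the curve $\gamma$ is extended after the straight ``neck'' segment back to $\theta=0$ and then down to $r=0$, so the submanifold closes up at a tip and has no free boundary sphere. You therefore cannot ``run \Cref{propW} at each ball'' and then glue the two results to a cylinder. What is needed is the \emph{half}-tunnel $A_j$: the same bent curve as in the well construction, but \emph{truncated} at the stage $\theta=\frac{\pi}{2}$ (the point the paper flags in Remark \ref{rmkT}), so that the resulting codimension-one submanifold is diffeomorphic to $\bS^{n-1}\times[0,1]$ with a genuine boundary sphere. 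This is the content of \Cref{halftunnellemma} in the paper, and it is not a corollary of \Cref{propW} as stated.

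\textbf{The metric interpolation to a round sphere is missing.} Even after truncating at $\theta=\frac{\pi}{2}$, the boundary sphere of $A_j$ is the geodesic sphere $\bS^{n-1}(c)\subset B_g(p,2\delta)$ with the \emph{induced} metric $g_c$, not the round metric $c^2 g_{rd}$. By \Cref{princurv} these differ by $O(c^2)$, but they are not equal, so directly appending a product cylinder $dt^2 + c^2 g_{rd}$ creates a metric discontinuity. You identify this as ``the main obstacle'' but then attribute its resolution to the bending itself; in fact, the bending only produces an \emph{approximately} round boundary. The paper resolves this with a separate interpolation step (over an interval of length $\eta$, inserting $h = g_c + \phi(t)\left(c^2 g_{rd}-g_c\right)+dt^2$) and uses the $C^2$-estimate in \Cref{princurv} to verify that this interpolation only changes the scalar curvature by an amount that can be made smaller than the slack $\frac{1}{j}$. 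Without this step your construction does not produce a smooth Riemannian metric on $T_j$, and property (1) is not verified across the gluing locus.

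\textbf{A small slip in the cylinder curvature estimate.} You write that the central cylinder has scalar curvature $(n-1)(n-2)/r^2 > 0 > \kappa-\frac{1}{j}$. The second inequality is generally false: for the paper's main applications $\kappa=n(n-1)$ or $\kappa=6$, so $\kappa-\frac{1}{j}>0$. The correct justification is simply that $(n-1)(n-2)/r^2 \to \infty$ as $r\to 0$, so it dominates the fixed threshold $\kappa-\frac{1}{j}$ for $r$ chosen small enough.

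Once these are fixed — use the truncated half-tunnels $A_j$ and $A'_j$ instead of wells, add the explicit metric interpolation to $c^2 g_{rd}$ controlled by \Cref{princurv}, and correct the justification for the cylinder's curvature — your argument becomes the paper's argument.
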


We adapt the proof from \cite{D}. The well and tunnel will be constructed as a codimension one submanifold. The submanifold will be defined by a curve, and this curve will control the geometry of the submanifold. First, we show how the curve defines the submanifold and how it affects its geometry. Second, we carefully construct the curve so that the submanifold will inherit the desired properties.

In particular, the construction will follow the following outline. First, we will describe how, given a curve, we can define a submanifold and write the scalar curvature in terms of quantities related to the curve. Second, we carefully construct a $C^1$-curve, $\gamma$, which will be used to define a submanifold that is the precursor to a well or a tunnel. Third, we adjust the construction of $\gamma$ so the resulting manifold will be a well. Fourth, we describe the smoothing procedure to make $\gamma$ a $C^\infty$-curve. Fifth, we construct a well and check it has the desired properties. Sixth, we perform the analogous steps to construct a tunnel with the desired properties.

\subsection{A Submanifold defined by a curve}
Let $(M^n,g)$ be a compact Riemannian manifold with scalar curvature $R^M\geq \kappa$. Let $\delta>0$ and $B=B(p,2\delta)$ be a geodesic ball in $M$. Consider the Riemannian product $(X,g_X)=(\bR \times B, dt^2+dr^2+g_r)$.  Let $\rho\in B$ be a geodesic radius from $p$ to $\partial B$ and define $S=\bR\times \rho$, which is a total geodesic submanifold of $\bR \times B$ with coordinates $(t,r)$. Let $\gamma$ be a smooth curve in $S$ to be determined later. Finally, let ${\Sigma} = \left\{(y,q)\in X : \left(y,||q||_g\right)\in \gamma \right\}$ be a submanifold of $(X,g_X)$ with the induced metric, where $||\cdot||_g$ is the distance from $p$ to $q$ with respect to $g$.
Now we want to calculate the scalar curvature of $\Sigma$. To do so we will need the following lemma from \cite{D}:

\begin{lemma}\label{princurv}
The principal curvatures of the hypersurface $\bS^{n-1}(\eps)$ in $B$ are each of the form $\frac{1}{-\eps} + O(\eps)$ for small $\eps$. Furthermore, let $g_\eps$ be the induced metric on $\bS^{n-1}(\eps)$ and let $g_{rd,\eps}$ be the round metric of curvature $\frac{1}{\eps^2}$. Then, as $\eps \to 0$, $\frac{1}{\eps^2}g_\eps \to \frac{1}{\eps^2}g_{rd,\eps} =g_{rd}$ in the $C^2$ topology, moreover,  $||g_{rd}-\frac{1}{\eps^2}g_\eps||\leq\eps^2$.
\end{lemma}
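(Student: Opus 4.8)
The plan is to carry out the whole computation in geodesic polar coordinates centered at $p$. Since $\delta$ is small (below the injectivity radius at $p$), on $B=B(p,2\delta)$ the metric takes the form $g=dr^2+g_r$, where $g_r$ is a smooth one-parameter family of Riemannian metrics on $\bS^{n-1}$ with $g_r\to 0$ as $r\to 0$ and $r$ equal to $g$-distance from $p$; the geodesic sphere $\bS^{n-1}(\eps)$ is then precisely the level set $\{r=\eps\}$, its induced metric is $g_\eps:=g_r|_{r=\eps}$, and $\partial_r$ is a unit normal. The single external ingredient I would invoke is the classical Taylor expansion of a metric in normal coordinates, which in polar form reads $g_r=r^2\bigl(g_{rd}+O(r^2)\bigr)$, where $g_{rd}$ is the unit round metric on $\bS^{n-1}$ and the $O(r^2)$ error is controlled in every $C^k$-norm uniformly over $\bS^{n-1}$, with constants depending only on $n$ and on bounds for the curvature of $g$ and its covariant derivatives on the fixed ball $B$.

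From this the metric assertion is immediate: setting $r=\eps$ and recalling that the round metric of sectional curvature $\eps^{-2}$ is $g_{rd,\eps}=\eps^2 g_{rd}$, one gets $\tfrac{1}{\eps^2}g_\eps=g_{rd}+O(\eps^2)$, with the error estimated in $C^2$ (indeed in every $C^k$). Hence $\tfrac{1}{\eps^2}g_\eps\to g_{rd}$ in the $C^2$ topology and $\bigl\|g_{rd}-\tfrac{1}{\eps^2}g_\eps\bigr\|\le\eps^2$ once $\eps$ is small enough.

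For the principal curvatures I would use that the shape operator of $\{r=\eps\}$, with the sign convention $A_\eps(X)=-\nabla_X\partial_r$, equals $A_\eps=-\tfrac12\,g_\eps^{-1}\bigl(\partial_r g_r\bigr)\big|_{r=\eps}$. Differentiating the expansion gives $\partial_r g_r|_{r=\eps}=2\eps\,g_{rd}+O(\eps^3)$, so
\[
A_\eps=-\tfrac12\bigl(\eps^2 g_{rd}+O(\eps^4)\bigr)^{-1}\bigl(2\eps\,g_{rd}+O(\eps^3)\bigr)=-\tfrac1\eps\bigl(\mathrm{Id}+O(\eps^2)\bigr)=-\tfrac1\eps\,\mathrm{Id}+O(\eps),
\]
whose eigenvalues are the principal curvatures of $\bS^{n-1}(\eps)$; each is therefore of the form $-\tfrac1\eps+O(\eps)$, as claimed. (A round Euclidean sphere of radius $\eps$ has principal curvatures exactly $-\tfrac1\eps$ in this convention, which is the leading term.)

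Nothing here is genuinely hard; the only point requiring care is that all the $O(\cdot)$ terms be uniform in the angular variable, and this is automatic since $B$ is a fixed ball in the fixed manifold $(M^n,g)$, so its curvature and covariant derivatives are bounded and the implied constants depend only on those bounds and on $n$. This is what makes ``for small $\eps$'' meaningful, and a detailed account appears in \cite{D}.
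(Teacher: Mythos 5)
Your proof is correct. The paper does not actually prove this lemma; it cites it directly from Dodziuk \cite{D} and moves on, so there is no competing argument in the text to compare against. What you have written is the standard derivation that one would find in such a reference: work in geodesic polar coordinates $g=dr^2+g_r$, invoke the normal-coordinate expansion $g_r=r^2(g_{rd}+O(r^2))$ with $C^k$ control of the error, read off $\tfrac{1}{\eps^2}g_\eps=g_{rd}+O(\eps^2)$, and obtain the principal curvatures from the shape operator $A_\eps=-\tfrac12 g_\eps^{-1}\partial_r g_r|_{r=\eps}=-\tfrac1\eps\,\mathrm{Id}+O(\eps)$. Your identification of the sign convention (outward normal $\partial_r$, giving $-1/\eps$ as the leading term) matches the lemma as stated. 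One small caveat, which is really an imprecision in the lemma itself rather than in your argument: the final bound $\|g_{rd}-\tfrac{1}{\eps^2}g_\eps\|\le\eps^2$ with constant exactly $1$ is not forced by $O(\eps^2)$ alone — the implied constant depends on the curvature of $(M,g)$ on $B$ — so it should be read as $\le C\eps^2$, or with a norm normalized to absorb the constant; shrinking $\eps$ does not by itself remove a multiplicative constant exceeding $1$. Since the paper's subsequent use of the lemma only needs the $O(\eps^2)$ rate, this does not affect anything downstream.
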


 Now to calculate the scalar curvature of $\Sigma$, fix $q \in \Sigma \cap S$. Let $e_1,\dots,e_n$ be an orthonormal basis of of $T_q({\Sigma})$ where $e_1$ is tangent to $\gamma$. Note that the for points in ${\Sigma}\cap S$ the normal $\nu$ to ${W}$ in $X$ is the same as the normal to $\gamma$ in $S$. 

From the Gauss equations:

\[
R^X(X,Y,Z,U)=R^{\Sigma}(X,Y,Z,U) - A(X,U)A(Y,Z) + A(X,Z)A(Y,U)
\]
we see 
\[
K^{\Sigma}_{ij}=K^X_{ij} +\lambda_i\lambda_j.
\]
where $\lambda_i$ are principal curvatures corresponding to $e_i$ and $K^{\Sigma}_{ij}$ and $K^X_{ij}$ are the respective sectional curvatures. We note that $\lambda_1 = k$ where $k$ is the geodesic curvature of $\gamma$. For $i=2,\ldots,n$ we see by \Cref{princurv}
\[
\begin{split}
\lambda_i &= \langle\nabla_{\partial_{i}} \nu, \partial_{i}\rangle \\
&= \langle\nabla_{\partial_{i}} \cos \theta \partial_t+\sin\theta \partial_r, \partial_{i}\rangle\\
&=\cos \theta \langle\nabla_{\partial_{i}} \partial_t,\partial_i\rangle+\sin \theta \langle\nabla_{\partial_{i}} \partial_r, \partial_{i}\rangle\\
&= \sin \theta \langle\nabla_{\partial_{i}} \partial_r,\partial_i\rangle\\
&=\left(\frac{1}{-r} + O(r) \right) \sin \theta,
\end{split}
\]
where $\theta$ is the angle that between $\nu$ and the $t$-axis.
Now note that
\[
K^X_{1j}=R^X(e_j,e_1,e_1,e_j)=R^X(e_j,\cos \theta \partial_r,\cos \theta \partial_r,e_j)=\cos^2\theta K^M_{\partial_r, j}.
\]
For $i\neq 1$ and $j\neq 1$
\[
K^X_{ij}=R^X(e_j,e_i,e_i,e_j)=\ K^M_{i, j}.
\]
Since 
\[
R^{{\Sigma}} = \sum_{i\neq j} K^{{\Sigma}}_{ij}
\]
we see
\begin{equation}\label{scal}
    \begin{split}
        R^{{\Sigma}} &= R^M - 2\text{Ric}^M\left(\partial_r,\partial_r\right)\sin^2\theta \\
                            &+ (n-2)(n-1)\left(\frac{1}{r^2} + O(1)\right) \sin^2\theta \\
                            & - (n-1)\left(\frac{1}{r} + O(r)\right) k \sin\theta.
    \end{split}
\end{equation}

\subsection{Constructing the Curve}
The construction of the curve that will define the well $W$ and the construction of the curve that will define the tunnel $T$ are very similar. First, we will construct a curve that will define a submanifold $\Sigma$, which can be thought of as the precursor to a well or a tunnel. 
 
We want to construct a curve $\gamma$ so that the resulting manifold $\Sigma$ has $R^{\Sigma}>\kappa-\frac{1}{j}$ for any $j\in\bN$.
We will first construct $\gamma$ as a piecewise curve of circular arcs and then smooth the curve. To do this, we will prescribe the geodesic curvature $k(s)$ of $\gamma$, and by Theorem 6.7 in \cite{G}, we know that $k(s)$ determines $\gamma$. The unit tangent vector to $\gamma$ and the curvature are given by
\[
\frac{d\gamma}{ds} = (\sin\theta,-\cos\theta) \qquad \text{and} \qquad k =\frac{d\theta}{ds}.
\]
Therefore, if $\gamma(s)$ is defined for $s\leq s'$ and $k(s)$ is given for $s\geq s'$ we have $\gamma(s)=(t(s),r(s))$ where

\begin{equation} \label{curve}
    \begin{split}
        &\theta(s) = \theta(s') +\int_{s'}^s k(u)du\\
        &t(s) = t(s') +\int_{s'}^s \sin\theta(u) du\\
        &r(s) = r(s') - \int_{s'}^s \cos\theta(u) du.\\
   \end{split}
\end{equation}

Now, we begin the construction of $\gamma$. Fix $j\in\bN$. Let $\delta_0<\delta$ and let $(0,\delta_0)$ be a point in the $(t,r)$-plane. Next, define the initial segment of $\gamma$ as the line segment from $(0,2\delta)$ to $(0,\delta_0)$ for $s\in[-2\delta,0]$. Define the next segment to be an arc of a circle of curvature $k_0=1$ that is tangent to $r$-axis at $(0,\delta_0)$ and let $\gamma$ run from $0$ to $s_0 \leq \frac{\delta_0}{2}$ where $s_0$ is chosen so that $R^{\Sigma}>\kappa-\frac{1}{j}$ and that {$\frac{\sin\theta(s_0)}{8r(s_0)}<1$} for all $s\leq s_0$. We note that $s_0$ exists since $\theta(0)=0$ and by the scalar curvature formula (\ref{scal}). Next, we prove a lemma that gives a condition on $\gamma$ that controls the scalar curvature. 

\begin{lemma}\label{scalposlem}
If $\delta_0$ is small enough and if 
\begin{equation}\label{scalpos}
    \frac{\sin \theta(s)}{4r(s)} > k(s) \text{ for } s\geq s_0,
\end{equation}
then $R^{\Sigma}>\kappa$.
\end{lemma}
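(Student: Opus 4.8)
## Proof Proposal for Lemma \ref{scalposlem}

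\textbf{Strategy.} The plan is to start from the scalar curvature formula \eqref{scal} and estimate each of the three correction terms on the right-hand side, showing that the combined negative contributions are dominated by the positive $(n-2)(n-1)\left(\frac{1}{r^2}+O(1)\right)\sin^2\theta$ term once $\delta_0$ is small and \eqref{scalpos} holds. The key structural fact is that along the portion of $\gamma$ where $s \geq s_0$ the quantity $r(s)$ stays bounded above by $\delta_0$ (the curve, once past the initial vertical segment and the first arc, only moves inward or stays nearly level, so $r$ is small), while $\sin\theta$ is bounded, so the $\frac{1}{r^2}$ term is the dominant scale. This is exactly the sort of term-by-term bookkeeping that \cite{D} carries out, and I would follow that template.

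\textbf{Key steps, in order.} First, I would record that on $\{s \geq s_0\}$ we have $0 < r(s) \leq \delta_0$ and (from the choice of $s_0$ and the monotonicity built into the construction) $\sin\theta(s) \geq 0$, so that all the $\sin\theta$ factors are nonnegative; this lets me treat the terms without sign ambiguity. Second, I would bound the Ricci term: since $M$ is fixed (or at least $\mathrm{Ric}^M$ is bounded on the compact ball $B$), there is a constant $C_1$ with $\bigl|2\,\mathrm{Ric}^M(\partial_r,\partial_r)\sin^2\theta\bigr| \leq C_1$, a quantity independent of $j$ and of how small $\delta_0$ is. Third, I would handle the last (negative) term using hypothesis \eqref{scalpos}: since $k(s) < \frac{\sin\theta(s)}{4r(s)}$ for $s \geq s_0$, we get
\[
(n-1)\left(\frac{1}{r}+O(r)\right)k\sin\theta \;<\; (n-1)\left(\frac{1}{r}+O(r)\right)\frac{\sin^2\theta}{4r} \;=\; \frac{n-1}{4}\left(\frac{1}{r^2}+O(1)\right)\sin^2\theta.
\]
Fourth, I would compare this against the positive curvature term $(n-2)(n-1)\left(\frac{1}{r^2}+O(1)\right)\sin^2\theta$: since $n \geq 3$, we have $n-2 \geq 1 > \frac{1}{4}$, so the positive term beats the bound on the negative term by a definite margin, leaving a surplus of at least $\left(n-2-\frac14\right)(n-1)\left(\frac{1}{r^2}+O(1)\right)\sin^2\theta \geq 0$ (the $O(1)$ pieces and the $O(r)$ piece inside $k$'s coefficient are absorbed by taking $\delta_0$ small, since they are lower-order in $\frac{1}{r}$). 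Finally, I would conclude $R^{\Sigma} \geq R^M - C_1 \geq \kappa - C_1$; but this only gives $\kappa - C_1$, not $\kappa$, so the real argument must use that where $\sin\theta$ is bounded away from $0$ the surplus $\frac{\sin^2\theta}{r^2}$ term is genuinely \emph{large} (of order $\delta_0^{-2}$) and hence dominates $C_1$ outright, while where $\sin\theta$ is near $0$ the negative terms are themselves near $0$ and one needs a finer comparison — this is where the precise choice of the arc structure of $\gamma$ near $s_0$ and the smallness of $\delta_0$ enter.

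\textbf{Main obstacle.} The delicate point is the regime where $\sin\theta(s)$ is small, so that the helpful positive term $\frac{(n-2)(n-1)}{r^2}\sin^2\theta$ degenerates and cannot by itself swamp the bounded term $2\,\mathrm{Ric}^M(\partial_r,\partial_r)\sin^2\theta$ — although note that term is also proportional to $\sin^2\theta$, so it too degenerates, and the genuine competition is only between the two $\sin^2\theta$-weighted curvature-type terms and the $k\sin\theta$ term. The crux is therefore showing that the inequality \eqref{scalpos} on $k$ is strong enough that, after multiplying through by the common factor and using $n \geq 3$, the residual is nonnegative \emph{pointwise} rather than merely on average; I expect this to come down to choosing $\delta_0$ small enough that every $O(1)$ and $O(r)$ error term in \eqref{scal} (whose implicit constants depend only on $M$ and $B$) is smaller than the fixed positive gap $\left(n-2-\frac14\right)(n-1)$ times the leading coefficient, which is harmless since that gap is a constant while the errors vanish with $\delta_0$. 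I would also double-check that the hypothesis "$\frac{\sin\theta(s_0)}{8r(s_0)} < 1$ for all $s \leq s_0$" from the construction is what guarantees the complementary estimate on $[0,s_0]$, so that $R^\Sigma > \kappa - \frac1j$ globally and $R^\Sigma > \kappa$ on $\{s \geq s_0\}$ as claimed.
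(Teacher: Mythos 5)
Your proposal follows essentially the same strategy as the paper: use the hypothesis $\frac{\sin\theta}{4r}>k$ to dominate the $k\sin\theta$ term by a fraction of the leading $\frac{1}{r^2}\sin^2\theta$ curvature term, and then absorb the Ricci term and all $O(1)$, $O(r)$ errors by taking $\delta_0$ small. The one place where you go slightly astray is the sentence ``Finally, I would conclude $R^{\Sigma}\geq R^M-C_1\geq\kappa-C_1$'' — this bound is simply not what the estimate gives, because you have discarded the fact that the Ricci term carries the same $\sin^2\theta$ factor as the surplus $\left(n-2-\tfrac14\right)(n-1)\left(\tfrac{1}{r^2}+O(1)\right)\sin^2\theta$. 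Once you keep that common factor (as you do realize a few lines later), the argument becomes a single pointwise comparison
\[
R^{\Sigma} \;>\; R^M + \sin^2\theta\left[\left(n-2-\tfrac14\right)(n-1)\left(\tfrac{1}{r^2}+O(1)\right) - 2\,\mathrm{Ric}^M(\partial_r,\partial_r)\right],
\]
and the bracket is positive for $\delta_0$ small since $r\leq\delta_0$; no splitting into ``$\sin\theta$ small'' versus ``$\sin\theta$ bounded away from zero'' regimes is needed, nor any invocation of the arc structure near $s_0$. The only genuine stylistic difference from the paper is that your one-sided bound $(n-1)\left(\tfrac1r+O(r)\right)k\sin\theta<\tfrac{n-1}{4}\left(\tfrac{1}{r^2}+O(1)\right)\sin^2\theta$ is valid for both signs of $k$ (when $k\leq 0$ the left side is nonpositive and the right side nonnegative), whereas the paper dispatches $k\leq 0$ as a separate, trivial case and only does the algebraic regrouping when $k>0$; your version is slightly more uniform but equivalent in content.
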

\begin{proof}
By (\ref{scal}) we see if $k\leq 0$ then
\begin{equation}
    \begin{split}
        R^{\Sigma} &= R^M - 2\text{Ric}^M\left(\partial_r,\partial_r\right)\sin^2\theta \\
                            &+ (n-2)(n-1)\left(\frac{1}{r^2} + O(1)\right) \sin^2\theta \\
                            & - (n-1)\left(\frac{1}{r} + O(r)\right) k \sin\theta.
     \end{split}
\end{equation}
and so the third and fourth terms will be nonnegative. By taking $\delta_0>r$ small enough, the third and fourth terms will dominate the second term so $R^{\Sigma}>\kappa$.

Now, if $k>0$, then by rewriting the right-hand side of (\ref{scal}) we get
\begin{equation}
    \begin{split}
        R^{\Sigma} &= \frac{(n-2)(n-1)}{2r^2}\sin^2\theta + \left(\frac{(n-2)(n-1)}{2r^2}- 2\text{Ric}^M\left(\partial_r,\partial_r\right)+O(1)\right)\sin^2\theta\\
        &+\frac{-2(n-1)k}{r}\sin\theta +\left(\frac{(n-1)}{r}-O(r)\right)k\sin\theta\\
        &+R^M,
    \end{split}
\end{equation}
so second and fourth terms will be positive by taking $\delta_0>r$ is small enough and by assumption we have
\[
 \frac{\sin \theta}{4r} > k
 \]
 which implies
\[
\frac{(n-2)(n-1)}{2r^2}\sin^2\theta +\frac{-2(n-1)k}{r}\sin\theta >0,
\]
and so $R^\Sigma>\kappa.$
\end{proof}
 Thus, as we continue to construct $\gamma$, we will ensure that (\ref{scalpos}) is satisfied. We will now extend $\gamma$ by a circular arc of curvature $k_1=\frac{\sin\theta(s_0)}{8r(s_0)}$ on $[s_0,s_1]$ where $s_1-s_0 =\frac{r_0}{2}$, where $r(s_0)=r_0$. Let $\theta(s_0)=\theta_0$. By (\ref{curve}), we have first that $\sin\theta(s)$ is increasing and $r(s)$ is decreasing and so on $[s_0,s_1]$
\[
\frac{\sin\theta(s)}{4r(s)}> \frac{\sin\theta_0}{4r_0} >\frac{\sin\theta_0}{8r_0} =k_1.
\]
Second, we see that  $\gamma$ does not cross the $t$-axis because $s_1-s_0 =\frac{r_0}{2}$, and third we have
\[
\theta(s_1)-\theta_0 =  k_1(s_1-s_0) =\frac{\sin \theta_0}{8r_0} \frac{r_0}{2} = \frac{\sin \theta_0}{16}.
\]

Now we proceed inductively. Define:
\[
s_i=s_{i-1}+\Delta s_i, \qquad \Delta s_i =\frac{r_{i-1}}{2}, \qquad r_i=r(s_i), \qquad \theta_i=\theta(s_i), \qquad k_i =\frac{\sin \theta_{i-1}}{8r_{i-1}}.
\]

As $\theta(s)$ is increasing we have that $\theta_i - \theta_{i-1} = \frac{\sin \theta_{i-1}}{16}> \frac{\sin \theta_{0}}{16}$ and so
\[
\theta_i \geq \theta_0 +i\frac{\sin\theta_0}{16}.
\]
Therefore, $\theta_i$ grows without bound so define $m$ to be such that $\theta_{m-1}<\sin^{-1}\left(\frac{12}{13}\right)\leq \theta_m$. Redefine $s_m$ so that $\theta_m:= \sin^{-1}\left(\frac{12}{13}\right) =\bar{\theta}$. Note that $\Delta s_m\leq \frac{r_{m-1}}{2}$.

Now extend again by one circular arc. To do this we need to define $k_{m+1}>0$ and $s_{m+1}=s_m+\Delta  s_{m+1}$. We add a circular arc until $\theta_{m+1}=\frac{\pi}{2}$. By the definition of $\bar{\theta}$, there exists a $k_{m+1}$ such that $1-\sin\bar{\theta} < k_{m+1}\frac{r_m}{2}< \frac{\sin\bar{\theta}}{8}$ and by (\ref{curve}) we know
\begin{align*}
    r_{m+1} &= r_m - \int_{s_m}^{s_{m+1}} \cos \theta(u) du\\
    &=r_m- \int_{s_m}^{s_{m+1}} \cos\left(s_m+k_{m+1}(u-s_m)\right)du\\
    &=r_m - \frac{1}{k_{m+1}} \left(\sin\theta_{m+1} -\sin \theta_m\right)\\
    &= r_m - \frac{1}{k_{m+1}} \left(1-\sin\bar{\theta}\right)\\
    &>\frac{r_m}{2}.
\end{align*}
and $k_{m+1}<\frac{\sin\bar{\theta}}{4r_m}$. 

\begin{remark}\label{rmkT}
Up until this point the curve $\gamma$ works for both the construction of a well and a tunnel. However, from here on the construction of $\gamma$ differs slightly for the well and the tunnel. We will continue now with the construction of the well and discuss the tunnel construction later in Subsection \ref{subT}.
\end{remark}

\subsection{Adjusting the curve to construct a well}
Now we will refine our construction of $\gamma$ in order to construct a well. We want to extend by a line with a negative slope of length $d>0$ and not have $\gamma$ cross the $t$-axis. By the intermediate value theorem
there exists an $\hat{s}\in (s_{m},s_{m+1})$ such that $\theta(\hat{s})= \widehat{\theta}$ where
\begin{equation} \label{stopangle}
\begin{cases}
\max\left\{\bar{\theta}, \,\cos^{-1}\left(\frac{r_{m+1}}{2}\right)\right\} < \widehat{\theta} < \frac{\pi}{2} & \text{ if } d\leq 1 \\
\max\left\{\bar{\theta}, \,\cos^{-1}\left(\frac{r_{m+1}}{2d}\right)\right\} < \widehat{\theta} < \frac{\pi}{2} & \text{ if } d>1 \\
\end{cases}
\end{equation}
since $\theta_{m+1}=\frac{\pi}{2}$.

Redefine $s_{m+1}$ such that $s_{m+1}=\hat{s}$ and $\theta_{m+1}=\widehat{\theta}$. Extend $\gamma$ to $[s_{m+1},s_{m+1}+d]$ by setting $k=0$ on $[s_{m+1},s_{m+1}+d]$. Furthermore, note that by (\ref{curve}) we have $\theta(u)\equiv\theta_{m+1}$ on that interval and
\begin{align*}
    r(s_{m+1}+d) &= r_{m+1} - \int_{r_{m+1}}^{r_{m+1}+d} \cos\theta(u)du \\
    &=r_{m+1} -  d\cos\theta_{m+1} \\
    &\geq r_{m+1} - \frac{r_{m+1}}{2} \\
    &>0.
\end{align*}

Let $s_{m+1}+d =s_{m+2}$ and $\theta(s_{m+1}+d)=\theta_{m+2}$.  We now extend on $[s_{m+2},s_{m+3}]$ by a small circular arc of negative geodesic curvature such that $\theta(s_{m+3})=0$. Take 
\[
k_{m+3} <\frac{-2\sin\theta_{m+2}}{r_{m+2}}.
\]
Since,

\begin{align*}
    \theta(s) =\theta_{m+2} + \int_{s_{m+2}}^s k_{m+3} du = \theta_{m+2} + k_{m+3}(s-s_{m+2}).
\end{align*}
we have
\begin{align*}
    r(s_{m+3}) &=r_{m+2} - \int_{s_{m+2}}^{s_{m+3}} \cos\theta(u)du\\
    r_{m+3} &= r_{m+2} - \frac{1}{k_{m+3}}\left(\sin\theta_{m+3} - \sin\theta_{m+2}\right)\\
    r_{m+3} &= r_{m+2} +  \frac{1}{k_{m+3}}\sin\theta_{m+2}\\
    &>0.
\end{align*}

We can extend $\gamma$ on $[s_{m+3},s_{m+4}]$ by a vertical straight line by setting $k_{m+4} =0$, where $s_{m+4}$ is chosen so that $r(s_{m+4})=0$. 

Since $\gamma$ is parameterized by arclength, we note that a bound on $s_{m+4}$ is a bound on arclength. In following lemmas, we prove an upper bound for $s_{m+4}$.
\begin{lemma}\label{length1}
There exists a constant $0<C_1<1$ independent of $j$ and $d$ such that
\[
\frac{r_i}{r_{i-1}}\leq C_1
\]
for $1\leq i\leq m-1$
\begin{proof}
By (\ref{curve}) and by the mean value theorem we have
\[
r_i=r_{i-1} -\int_{s_{i-1}}^{s_i} \cos\theta(u)du = r_{i-1} - \Delta s_i \cos\xi_i = r_{i-1}\left(1-\frac{\cos\xi_i}{2}\right)
\]
for some $\xi_i\in[s_{i-1},s_i].$ Recalling that $\bar{\theta}\geq \xi_i$ for $1\leq i\leq m-1$ we see that
\[
\frac{r_i}{r_{i-1}}\leq 1-\frac{\cos\bar{\theta}}{2} = \frac{21}{26}.
\]

\end{proof}
\end{lemma}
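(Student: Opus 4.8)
The plan is to track how $r_i = r(s_i)$ shrinks from one node of the broken geodesic‑curvature curve to the next, using the integral formula for $r$ in $(\ref{curve})$ together with the fact that, on the relevant range of indices, the turning angle $\theta$ stays bounded away from $\pi/2$.

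First I would recall that for $1\le i\le m$ the $i$-th piece of $\gamma$ is the circular arc on $[s_{i-1},s_i]$ with $\Delta s_i = s_i - s_{i-1} = r_{i-1}/2$ (this is the inductive definition; the only caveat is $\Delta s_m \le r_{m-1}/2$ after the redefinition of $s_m$). From $(\ref{curve})$,
\[
r_i = r_{i-1} - \int_{s_{i-1}}^{s_i}\cos\theta(u)\,du .
\]
Since $\theta(u)$ is continuous on $[s_{i-1},s_i]$, the mean value theorem for integrals produces $\xi_i\in[s_{i-1},s_i]$ with $\int_{s_{i-1}}^{s_i}\cos\theta(u)\,du = \Delta s_i\,\cos\theta(\xi_i) = \tfrac{r_{i-1}}{2}\cos\theta(\xi_i)$, hence
\[
r_i = r_{i-1}\Big(1 - \tfrac12\cos\theta(\xi_i)\Big).
\]

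Next I would use that $\theta$ is nondecreasing along $\gamma$ up to $s_m$ (all the prescribed curvatures $k_1,\dots,k_m$ are positive, so $d\theta/ds = k \ge 0$ there) and that $m$ was chosen so that $\theta_{m-1} < \bar\theta := \sin^{-1}(12/13)$. Thus for $1\le i\le m-1$ and $u\le s_i\le s_{m-1}$ one has $\theta(u)\le\theta_{m-1}<\bar\theta<\pi/2$, so $\cos\theta(\xi_i)\ge\cos\bar\theta = \tfrac{5}{13}>0$. Substituting,
\[
\frac{r_i}{r_{i-1}} = 1 - \tfrac12\cos\theta(\xi_i) \le 1 - \frac12\cdot\frac{5}{13} = \frac{21}{26} =: C_1,
\]
which lies in $(0,1)$ since $\cos\theta(\xi_i)\in(0,1]$, and which depends only on the fixed angle $\bar\theta$ — hence is independent of $j$ and $d$.

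There is no genuine obstacle here; the one point to watch is that the index must be restricted to $i\le m-1$ so that the bound $\theta(u)<\bar\theta$, and with it a uniform positive lower bound on $\cos\theta$, holds on the entire arc. At $i=m$ one only knows $\theta_m=\bar\theta$ at the right endpoint, which still gives $r_m\ge r_{m-1}/2$ but not the same clean ratio bound — which is exactly why the lemma is stated only up to $m-1$.
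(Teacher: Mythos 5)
Your proof is correct and follows essentially the same line as the paper's: write $r_i = r_{i-1}\bigl(1-\tfrac12\cos\theta(\xi_i)\bigr)$ via the mean value theorem and $\Delta s_i = r_{i-1}/2$, then bound $\cos\theta(\xi_i)$ from below by $\cos\bar\theta = 5/13$ using monotonicity of $\theta$ and $\theta_{m-1}<\bar\theta$. You also correctly fill in a small notational slip in the paper (the condition should read $\theta(\xi_i)\le\bar\theta$, not ``$\bar\theta\ge\xi_i$'') and identify the right reason the range stops at $m-1$, namely that $\Delta s_m$ was redefined to be only $\le r_{m-1}/2$, breaking the clean identity.
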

\begin{lemma} \label{length2}
There is a constant $C_2$ independent of $j$ and $d$ such that
$s_{m+4}\leq C_2\delta_0+d$ which implies that the length of $\gamma$ is bounded by $C_2\delta_0+d.$
\end{lemma}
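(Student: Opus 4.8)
The plan is to bound $s_{m+4}$ by bounding, separately, the arclength of each of the finitely many pieces out of which $\gamma$ is assembled after the parameter value $s=0$ (i.e.\ after the point $(0,\delta_0)$), and then to add them. Since $\gamma$ is unit speed, the length of a piece equals the length of its parameter interval, so it suffices to estimate each $\Delta s_i$. A preliminary observation that I would use throughout: on all of $[0,s_{m+4}]$ one has $\theta(s)\in[0,\tfrac\pi2]$ (it increases to $\bar\theta$, then to $\widehat\theta<\tfrac\pi2$, then decreases back to $0$, and is $0$ on the last segment), so by \eqref{curve} $r'(s)=-\cos\theta(s)\le 0$; hence $r$ is nonincreasing and $r_i\le r_0\le\delta_0$ for every index $i$ that appears.

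First I would handle the portion consisting of the ``many small arcs'', which is the only place an unbounded number of pieces occurs. The arc $[0,s_0]$ contributes at most $\delta_0/2$ directly from the construction. For $1\le i\le m$ we have $\Delta s_i=\tfrac{r_{i-1}}{2}$ (with $\Delta s_m\le\tfrac{r_{m-1}}{2}$), and \Cref{length1} gives $r_{i-1}\le C_1^{\,i-1}r_0\le C_1^{\,i-1}\delta_0$ with $C_1=\tfrac{21}{26}<1$; summing the geometric series gives $\sum_{i=1}^m\Delta s_i\le\tfrac{\delta_0}{2(1-C_1)}=\tfrac{13}{5}\delta_0$, a bound independent of $m$ (hence of $j$) and of $d$.

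Next I would bound the last four pieces. On $[s_m,s_{m+1}]$ the constant curvature satisfies $k_{m+1}\tfrac{r_m}{2}>1-\sin\bar\theta=\tfrac1{13}$ while $\theta$ increases by $\widehat\theta-\bar\theta<\tfrac\pi2-\bar\theta$, so $\Delta s_{m+1}=(\widehat\theta-\bar\theta)/k_{m+1}\le C'\delta_0$ with $C'=\tfrac{13}{2}(\tfrac\pi2-\bar\theta)$; on $[s_{m+2},s_{m+3}]$ we have $|k_{m+3}|>\tfrac{2\sin\theta_{m+2}}{r_{m+2}}\ge\tfrac{2\sin\bar\theta}{r_{m+2}}=\tfrac{24}{13 r_{m+2}}$ (using $\widehat\theta>\bar\theta$) while $\theta$ decreases by $\widehat\theta<\tfrac\pi2$, so $\Delta s_{m+3}=\widehat\theta/|k_{m+3}|\le C''\delta_0$ with $C''=\tfrac{13\pi}{48}$; the straight segment $[s_{m+1},s_{m+2}]$ contributes exactly $d$; and the final vertical segment $[s_{m+3},s_{m+4}]$ has $\theta\equiv0$, hence $r'\equiv-1$, so its length is $r_{m+3}\le r_{m+2}\le\delta_0$. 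Adding everything yields $s_{m+4}\le C_2\delta_0+d$ with $C_2:=\tfrac12+\tfrac{13}{5}+C'+C''+1$. Finally, since $\gamma$ is parameterized by arclength and the only further portion of $\gamma$ is the initial vertical segment from $(0,2\delta)$ to $(0,\delta_0)$, of length $2\delta-\delta_0$, the full length of $\gamma$ is at most $2\delta+C_2\delta_0+d$, which (using $\delta_0<\delta$) is $\le C(\delta+d)$ as needed in \Cref{propW}.

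The only point requiring genuine care — and the ``main obstacle'' in spirit — is checking that every constant above is truly independent of $j$ and of $d$. Independence of $j$ is essentially automatic: $j$ enters the construction only through the choice of $s_0$ (and the later smoothing, which does not affect lengths), while the recursion $\Delta s_i=r_{i-1}/2$, the exponential decay in \Cref{length1}, and the definitions of $m$, $\bar\theta$, $k_{m+1}$, $k_{m+3}$ involve no $j$. Independence of $d$ requires examining both branches of \eqref{stopangle}: in either case the stopping angle $\widehat\theta$ is pinned into the fixed interval $(\bar\theta,\tfrac\pi2)$, so $\sin\widehat\theta\ge\tfrac{12}{13}$ and $\widehat\theta\le\tfrac\pi2$ uniformly, which is exactly what makes $C'$ and $C''$ $d$-free; the one place $d$ survives is the single horizontal segment, producing precisely the additive $+\,d$ in the conclusion.
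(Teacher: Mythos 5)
Your proof is correct and follows essentially the same route as the paper: bound each $\Delta s_i$ separately via the geometric decay of $r_i$ from Lemma~\ref{length1} and direct estimates on the four final arcs, then sum. Your constants are a bit tighter in places and you spell out more explicitly that $\widehat\theta\in(\bar\theta,\pi/2)$ in both branches of \eqref{stopangle} (hence $d$-independence), but the decomposition and all key estimates match the paper's proof.
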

\begin{proof}
We recall for $1\leq i \leq m$, $\Delta s_i \leq \frac{r_{i-1}}{2}$ so
\begin{equation}\label{len}
\begin{split}
    s_m&=2\delta-\delta_0+s_0+\Delta s_1 +\cdots +\Delta s_m\\
    &\leq 2\delta+s_0 + \frac{1}{2} \left(r_0+r_1+\cdots+r_{m-1}\right)\\
    & \leq 2\delta+s_0 + \frac{r_0}{2} \left(1+C_1+\cdots +C_1^{m-1}\right)\\
    &\leq 3\delta+\frac{\delta_0}{2}\left( \frac{1}{1-C_1}\right)\\
    &\leq \frac{28}{5}\delta
\end{split}
\end{equation}

 Now, we note that by (\ref{curve}) and (\ref{stopangle}):
\[
\Delta s_{m+1} \leq \frac{1}{k_{m+1}} \left(\frac{\pi}{2}-\bar{\theta}\right) < r_m \frac{\frac{\pi}{2}-\bar{\theta}}{1-\sin\bar{\theta}} \leq  \frac{\frac{\pi}{2}-\bar{\theta}}{1-\sin\bar{\theta}} \delta_0 = 13\left(\frac{\pi}{2} - \sin^{-1}\left(\frac{12}{13}\right)\right)\delta_0 .
\]
By (\ref{curve}), we have that
\[
\theta_{m+3}= \theta_{m+2} +k_{m+3}\Delta s_{m+3}.
\]
Therefore,
\begin{equation}
    \Delta s_{m+3} = \frac{\theta_{m+2}}{-k_{m+3}} \leq \frac{2r_{m+2}\theta_{m+2}}{\sin\theta_{m+2}} \leq \frac{\pi}{\sin\bar{\theta}} \delta_0 = \frac{13\pi}{12} \delta_0
\end{equation}
because $\theta_{m+2}\leq \frac{\pi}{2}$, $r_{m+2} < \delta_0$, and $\bar{\theta} < \theta_{m+2}$.

By construction,
\[
\Delta s_{m+2} = d \qquad \text{ and } \qquad \Delta s_{m+4} \leq \delta_0.
\]
Thus,
\begin{align*}
  s_{m+4} &=s_0+\Delta s_1 +\cdots +\Delta s_m + \Delta s_{m+1} + \Delta s_{m+2} + \Delta s_{m+3} + \Delta s_{m+4}\\
  &\leq C_2\delta+d.
\end{align*}
\end{proof}

\subsection{Smoothing the curve that defines the well}
So far we have constructed $k(s)$ as a piecewise constant function, $k\big|_{(s_i,s_{i+1}]} = k_{i+1}$
The resulting curve $\gamma$ is $C^1$ and piecewise $C^\infty$. 

We begin the smoothing of $\gamma$ by first smoothing out $k(s)$ on $[0, s_{m+3}]$. Let $g\in C^\infty(\bR)$ be a smooth function so that
$g$ is $0$ if $s<0$, $1$ if $s>1$, and strictly increasing on $[0,1]$. Let $h(x)=g(1-x)$ and $H=\int_0^1 h(x) dx$. Let $\tilde{k}(s)$ be the smooth function defined by 
\[
\widetilde{k}(s)=\begin{cases}
g\left(\frac{s}{\alpha}\right) & s\in\left[-\frac{\delta_0}{2},\alpha\right]\\
1& s\in [\alpha,s_{0}-\alpha]\\
(1-k_1)h\left(\frac{s-s_0}{\alpha}\right) +k_1 &s\in[s_{0}-\alpha,s_0]\\
k_1& s\in[s_0,s_1]\\
\left(k_{i+1}-k_i\right)g\left(\frac{s-s_i}{\alpha}\right) + k_i & s\in[s_i,s_i+\alpha]\\
k_{i+1}& s\in [s_i+\alpha,s_{i+1}]\\
k_{m+1}h\left(\frac{s-s_{m+1}}{\alpha}\right)  & s\in[s_{m+1},s_{m+1}+\alpha]\\
0& s\in [s_{m+1}+\alpha,s_{m+2}]\\
-k_{m+3}h\left(\frac{s-s_{m+2}}{\alpha}\right)+k_{m+3}  & s\in[s_{m+2},s_{m+2}+\alpha]\\
k_{m+3}& s\in [s_{m+2}+\alpha,s_{m+3}],
\end{cases}
\]
where $1\leq i\leq m$.

We note that $\alpha$ is the same for each $i$ and that its value will be determined later. Now let $\tilde{\theta}$ be the angle function associated to $\tilde{k}$.  By (\ref{curve}) we see that the smooth curve $\tilde{\gamma}(s)=(\tilde{t}(s),\tilde{r}(s))$ defined by $\tilde{k}(s)$  will converge uniformly to $\gamma$ on ${\left[\frac{\delta_0}{2},s_{m+3}\right]}$ as $\alpha$ goes to zero. Also, $\tilde{\theta}$ will converge uniformly to $\theta$ as $\alpha$ goes to zero.  Therefore, take $\alpha$ small enough such that $\tilde{\theta}(s_{m+1})$ satisfies (\ref{stopangle}); therefore, we will still extend by a line with a negative slope.

Note
\begin{align*}
\tilde{\theta}(s_{m+2}+\alpha) &= \tilde{\theta}(s_{m+2}) + \int_{s_{m+2}}^{s_{m+2}+\alpha} -k_{m+3}h\left(\frac{u-s_{m+2}}{\alpha}\right)+k_{m+3}  du\\
&=\tilde{\theta}(s_{m+2}) + \alpha k_{m+3}(1-H) \\
&> 0.
\end{align*}
By the smoothing process, $\tilde{\theta}(s_{m+3})$ may no longer be greater than 0. We will now fix that. If $\tilde{\theta}(s_{m+3}) \leq 0$  pick a $s^*\in (s_{m+2}+\alpha,s_{m+3}]$ such that $0<\tilde{\theta}(s^*)<\alpha$ which exists by the intermediate value theorem. 

If $\tilde{\theta}(s_{m+3}) >0$, we can redefine $s_{m+3}$ as $s_{m+3}+\frac{\tilde{\theta}(s_{m+2})}{-k_{m+3}}$ so that $\tilde{\theta}(s_{m+3}) =0$. By the intermediate value theorem, pick a $s^*\in (s_{m+2}+\alpha,s_{m+3}]$ such that $0<\tilde{\theta}(s^*)<\alpha$. 

Redefine $s_{m+3}$ in either case as $s_{m+3}=s^*$ and note $0<\tilde{\theta}(s^*)<\alpha$. On $[s_{m+3},s_{m+3}+2\beta]$ define
\begin{align*}
    \tilde{k}(s) = \begin{cases}
    -k_{m+3}g\left(\frac{s-s_{m+3}}{\beta}\right)+k_{m+3} & s\in [s_{m+3},s_{m+3}+\beta]\\
    0 & s\in [s_{m+3}+\beta,s_{m+3}+2\beta],
    \end{cases}
\end{align*}
where $\beta = \frac{\tilde{\theta}(s_{m+3})}{-k_{m+3}(1-H)}$ so that
\[
\int_{s_{m+3}}^{s_{m+3}+\beta} \tilde{k}(s) ds =-\tilde{\theta}(s_{m+3}).
\]
This makes $\tilde{\theta}(s_{m+3}+2\beta)=0.$

By (\ref{curve}) we see that the smooth curve $\tilde{\gamma}(s)=(\tilde{t}(s),\tilde{r}(s))$ defined by $\tilde{k}(s)$  will converge uniformly to $\gamma$ on ${\left[-2\delta,s_{m+3}\right]}$ as $\alpha$ goes to zero. 

Also, $\tilde{\theta}$ will converge uniformly to $\theta$ as $\alpha$ goes to zero; moreover, as $\alpha$ goes to zero so does $\beta$. Finally, take $\alpha$ small enough so that $\tilde{r}(s_{m+3}+2\beta)>0$. Extend the line segment at the end of $\tilde{\gamma}$ on $[s_{m+3}+2\beta,L]$ where $L$ is defined so that $\tilde{r}(L)=0$. Note that $|L-(s_{m+3}+2\beta)|< \delta_0$.

\subsection{Attaching the Well} We have constructed a smooth curve $\tilde{\gamma}$ on  ${\left[-2\delta,L\right]}$ that begins and ends as a vertical line segment. Define 
\[
{\tilde{W}_j}= \left\{(y,q)\in X : \left(y,||q||_M\right)\in \tilde{\gamma} \right\}
\]
and let $g_j$ be the induced metric, i.e., $\tilde{g}_j=\tilde{\iota}_j^*(dt^2+g)$ where $\tilde{\iota}_j:\bar{W}_j\to \bR \times B$ is the inclusion map.

\begin{lemma}\label{scalandlen}
For small enough $\alpha$ we will show that $\tilde{\gamma}$ satisfies $R^{\tilde{W}_j}\geq\kappa-\frac{1}{j}$ on $\left[-2\delta,L\right]$. Also the length  $\tilde{\gamma}$ is bounded by $C_3\delta+d$.
\end{lemma}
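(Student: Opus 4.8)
The plan is to verify the two assertions of \Cref{scalandlen} separately, using the explicit piecewise structure of $\tilde\gamma$ and the scalar curvature formula (\ref{scal}) together with \Cref{scalposlem} and \Cref{length2}.

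\textbf{Scalar curvature bound.} First I would break $[-2\delta,L]$ into the regions corresponding to the pieces of $\tilde k$, and check $R^{\tilde W_j}\geq\kappa-\frac1j$ on each. On the initial vertical segment $[-2\delta,0]$ and on the final vertical segments one has $\sin\tilde\theta=0$, so by (\ref{scal}) $R^{\tilde W_j}=R^M\geq\kappa$. On $[0,s_0]$ (and its smoothed neighborhoods), $s_0$ was \emph{chosen} precisely so that $R^\Sigma>\kappa-\frac1j$ there, and since $\tilde\gamma\to\gamma$ and $\tilde\theta\to\theta$ uniformly as $\alpha\to0$, the curvature of $\tilde W_j$ converges to that of $\Sigma$ on this compact piece, so for $\alpha$ small enough the bound $R^{\tilde W_j}>\kappa-\frac1j$ persists. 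On the middle region $[s_0,s_{m+1}]$ the curve was built so that the unsmoothed $\gamma$ satisfies the hypothesis (\ref{scalpos}), $\frac{\sin\theta}{4r}>k$: indeed at each stage $k_{i+1}=\frac{\sin\theta_{i-1}}{8r_{i-1}}<\frac{\sin\theta}{4r}$ using that $\sin\theta$ is increasing and $r$ is decreasing, and likewise $k_{m+1}<\frac{\sin\bar\theta}{4r_m}<\frac{\sin\theta}{4r}$. Since (\ref{scalpos}) is an open condition and $\tilde k\to k$, $\tilde\theta\to\theta$ uniformly, for $\alpha$ small the smoothed curve still satisfies $\frac{\sin\tilde\theta}{4\tilde r}>\tilde k$ on this range (away from $s_0$, which was handled above, and shrinking the thresholds slightly if needed), so \Cref{scalposlem} gives $R^{\tilde W_j}>\kappa$ there. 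On $[s_{m+1},s_{m+2}]$ one has $\tilde k\approx0$ and $\sin\tilde\theta>0$ bounded away from $0$, so the positive $\frac{1}{r^2}\sin^2\tilde\theta$ term in (\ref{scal}) dominates for $\delta_0$ small; on $[s_{m+2},s_{m+3}+2\beta]$ the geodesic curvature is $\tilde k\leq0$, so as in the proof of \Cref{scalposlem} the last two terms of (\ref{scal}) are nonnegative and $\frac{1}{r^2}\sin^2\tilde\theta$ dominates $\mathrm{Ric}^M$ once $\delta_0$ is small. Taking the minimum over the finitely many pieces of the (finite, $\alpha$-dependent) list of smallness requirements on $\alpha$ and $\delta_0$ yields a single choice making $R^{\tilde W_j}\geq\kappa-\frac1j$ throughout.

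\textbf{Length bound.} For the length, $\tilde\gamma$ is parametrized by arclength on $[-2\delta,L]$, so its length is $L+2\delta$. I would compare $L$ to the bound $s_{m+4}\leq C_2\delta+d$ from \Cref{length2}: the smoothing replaces each corner by an arc of length $O(\alpha)$ over a window of width $\alpha$, and there are a fixed number (bounded in terms of $m$, hence ultimately by a universal constant since $\theta_i\geq\theta_0+i\frac{\sin\theta_0}{16}$ forces $m$ bounded) of such windows; the extra pieces $[s_{m+3},s_{m+3}+2\beta]$ and $[s_{m+3}+2\beta,L]$ contribute $2\beta+\delta_0=O(\delta_0)$ with $\beta\to0$ as $\alpha\to0$. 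Hence $L\leq s_{m+4}+C\delta_0\leq C_2\delta+d+C\delta\leq C_3\delta+d$ for a constant $C_3$ independent of $j$ and $d$ (using $\delta_0<\delta$), so the length of $\tilde\gamma$ is at most $C_3\delta+d$ after absorbing the initial $2\delta$ into $C_3$.

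\textbf{Main obstacle.} The delicate point is that the choice of $\alpha$ enters both directions circularly: the scalar curvature estimate near $s_0$ and on $[s_0,s_{m+1}]$ needs $\alpha$ small \emph{relative to quantities ($s_0$, the $r_i$, the gaps in (\ref{scalpos}))} that were themselves fixed before smoothing, while the length estimate needs the number and width of smoothing windows controlled. I would resolve this by fixing the piecewise curve $\gamma$ and all its data first (so $m$, $s_0,\dots,s_{m+3}$, the $k_i$, and the strict inequalities (\ref{scalpos}) with a definite margin are all determined), then choosing $\alpha$ last, small enough to (i) preserve (\ref{stopangle}), (ii) preserve the open curvature conditions on each of the finitely many pieces by uniform convergence $\tilde k\to k$, $\tilde\theta\to\theta$, $\tilde\gamma\to\gamma$, and (iii) keep $\beta=\frac{\tilde\theta(s_{m+3})}{-k_{m+3}(1-H)}$ and $|L-(s_{m+3}+2\beta)|<\delta_0$ small. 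Since there are only finitely many conditions, such an $\alpha$ exists, and none of the resulting constants depend on $j$ or $d$.
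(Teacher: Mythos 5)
Your proof follows the paper's approach essentially exactly: a piecewise verification of the curvature bound on each segment of $\tilde\gamma$ using the formula (\ref{scal}), \Cref{scalposlem}, and the uniform convergence $\tilde k\to k$, $\tilde\theta\to\theta$, $\tilde\gamma\to\gamma$ as $\alpha\to0$, followed by deducing the length bound from \Cref{length2}. One factual slip is worth flagging: you assert in a parenthetical that $\theta_i\geq\theta_0+i\tfrac{\sin\theta_0}{16}$ ``forces $m$ bounded'' by a universal constant. It does not: $s_0$, and hence $\theta_0=\theta(s_0)$, is chosen to depend on $j$ so as to guarantee $R^\Sigma>\kappa-\tfrac1j$ on $[0,s_0]$, and as $j\to\infty$ this forces $\theta_0\to0$ and $m\to\infty$. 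This does not damage your argument --- the $m$-dependence of the total length is already absorbed in \Cref{length2} by the geometric series over the $r_i$, and the smoothing correction needs only that $\alpha$ is chosen \emph{after} $m$ is fixed, hence may be taken small relative to $m$ --- but the universal bound on $m$ should be struck. The paper's own treatment of the length bound is terser, citing \Cref{length2} and the uniform convergence of $\tilde\gamma$ to $\gamma$; your more detailed bookkeeping is correct once that claim is removed.
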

\begin{proof}
By construction $\tilde{\gamma}$ is parameterized by arclength so by \Cref{length2} length of  $\tilde{\gamma}$ is bounded by $C_3\delta_0+d$. 

On $\left[-2\delta,0\right]$, we have that 
\begin{align*}
     R^{{\tilde{W}_j}} &= R^M - 2\text{Ric}^M\left(\partial_{\tilde{r}},\partial_{\tilde{r}}\right)\sin^2\tilde{\theta}  + (n-2)(n-1)\left(\frac{1}{\tilde{r}^2} + O(1)\right) \sin^2\tilde{\theta}  \\&\quad\quad- (n-1)\left(\frac{1}{\tilde{r}} + O(r)\right) \tilde{k} \sin\tilde{\theta}\\
     &= R^M \\
     &>\kappa-\frac{1}{j}.
\end{align*}

On $[0,s_0]$, we have that $k_1<1$ because of our choice of $s_0$ and the construction.
\begin{align*}
     R^{{\tilde{W}_j}} &= R^M - 2\text{Ric}^M\left(\partial_{\tilde{r}},\partial_{\tilde{r}}\right)\sin^2\tilde{\theta}  + (n-2)(n-1)\left(\frac{1}{\tilde{r}^2} + O(1)\right) \sin^2\tilde{\theta}  \\&\quad\quad- (n-1)\left(\frac{1}{\tilde{r}} + O(r)\right) \tilde{k} \sin\tilde{\theta}\\
     &\geq \kappa - 2\text{Ric}^M\left(\partial_{\tilde{r}},\partial_{\tilde{r}}\right)\sin^2\tilde{\theta}  + (n-2)(n-1)\left(\frac{1}{\tilde{r}^2} + O(1)\right) \sin^2\tilde{\theta}  \\&\quad\quad- (n-1)\left(\frac{1}{\tilde{r}} + O(r)\right)  \sin\tilde{\theta} \\
     &>\kappa-\frac{1}{j}.
\end{align*}
since for small enough $\alpha$ we have that $\tilde{\theta}$ is uniformly close to $\theta$ and $\tilde{r}$ is uniformly close to $r$.

On $[s_0,s_1]$ we have that $\tilde{k}(s)=k_1$ and so 
\[
\frac{\sin \tilde{\theta}(s)}{4\tilde{r}(s)} - \tilde{k}(s) >0
\]
for small enough $\alpha$.

On $[s_i,s_{i+1}]$ for $1\leq i\leq m$, we have that
\[
\frac{\sin \tilde{\theta}(s)}{4\tilde{r}(s)} - \tilde{k}(s) = \left(\frac{\sin \tilde{\theta}(s)}{4\tilde{r}(s)}-k_{i+1}\right) + \left(k_{i+1}-\tilde{k}(s)\right)
\]
and so for small enough $\alpha$ we have the the first term is positive since $\frac{\sin {\theta}(s)}{4{r}(s)} > {k}(s)$ and the second term is positive by construction. 

On $[s_{m+1},s_{m+2}]$, we have that
\[
\frac{\sin \tilde{\theta}(s)}{4\tilde{r}(s)} \geq \frac{\sin \tilde{\theta}(s_{m+1})}{4\tilde{r}(s_{m+1})}>k_{m+1} > \tilde{k}(s).
\]
We have the first inequality since $\frac{\sin \tilde{\theta}(s)}{4\tilde{r}(s)}$ is non-decreasing. The second inequality was already verified above. The third inequality holds since by construction $k_{m+1} > \tilde{k}(s)$.

On $[s_{m+2},L]$, we have by construction that $\tilde{k}(s)$ is non-positive so 
\[
\frac{\sin \tilde{\theta}(s)}{4\tilde{r}(s)} \geq \tilde{k}(s).
\] 
Therefore, by \Cref{scalposlem} we have shown $R^{{\tilde{W}_j}}>\kappa-\frac{1}{j}$ on $\left[-\frac{\delta_0}{2},L\right]$.
\end{proof}

Next we will prove the diameter and volume bounds for the well, but before we prove those bounds, we need to recall the following fact.
\begin{prop}\label{propvol}
Let $B(p,r)$ be a geodesic ball of radius $r$ in a closed Riemannian manifold $(M^n,g)$. Then there exists constants $C,r_0$ depending on $g$ such that for any $p$ and for all $r\leq r_0$ we have
\begin{equation}\label{volcomp}
  \begin{split}
  &   \vol_g(B(p,r)) \leq C r^n\\
    &  \vol_g(\partial B(p,r)) \leq C r^{n-1}. 
\end{split}
\end{equation}
\end{prop}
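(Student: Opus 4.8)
The plan is to reduce both estimates to a uniform volume-distortion bound for the exponential map, with compactness of $M$ supplying all the uniformity in $p$. Since $M$ is closed it is compact, so its injectivity radius $\mathrm{inj}(M)$ is a positive number; fix once and for all some $r_0<\mathrm{inj}(M)$. Then for every $p\in M$ and every $r\le r_0$ the map $\exp_p$ restricts to a diffeomorphism from the Euclidean ball $B(0,r)\subset(T_pM,g_p)$ onto $B_g(p,r)$, and it carries the round sphere $\{v\in T_pM:|v|=r\}$ onto the smooth hypersurface $\partial B_g(p,r)$.

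Next I would introduce $\tilde J_p(v)$, the volume-distortion factor of $\exp_p$ at $v\in T_pM$ (that is, $\exp_p^*\,dV_g=\tilde J_p\,dV_{g_p}$), and likewise the $(n-1)$-dimensional area-distortion factor of the restriction of $\exp_p$ to the spheres $\{|v|=r\}$. Both are smooth functions of $(p,v)$ on the compact set $\{(p,v):p\in M,\ |v|\le r_0\}$ — for instance $\tilde J_p(0)=1$ since $d(\exp_p)_0=\mathrm{id}$ — hence bounded there by constants $C_0$ and $C_1$ respectively, depending only on $(M,g)$. A change of variables then gives
\[
\vol_g(B_g(p,r))=\int_{|v|\le r}\tilde J_p(v)\,dv\;\le\;C_0\,\vol_{\mathrm{eucl}}(B(0,r))=C_0\,\omega_n\,r^n,
\]
\[
\vol_g(\partial B_g(p,r))\;\le\;C_1\,\vol_{\mathrm{eucl}}(\{|v|=r\})=C_1\,n\,\omega_n\,r^{n-1},
\]
where $\omega_n$ denotes the volume of the Euclidean unit ball; taking $C=\max\{C_0,nC_1\}\,\omega_n$ yields both claimed inequalities.

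An alternative route that sidesteps the Jacobian computation is Bishop--Gromov volume comparison: by compactness there is $k\in\bR$ with $\mathrm{Ric}\ge(n-1)k$ on $M$, and then $\vol_g(B_g(p,r))$ and $\vol_g(\partial B_g(p,r))$ are bounded above by the ball-volume and sphere-area functions of the simply connected space form of curvature $k$, which are $\le Cr^n$ and $\le Cr^{n-1}$ respectively once $r\le r_0$ (after enlarging $C$). Either way the argument is routine; the only point that genuinely needs care is that every constant ($r_0$, $C_0$, $C_1$, or $k$) can be chosen independently of the center $p$, and this is exactly where the hypothesis that $M$ is closed is used.
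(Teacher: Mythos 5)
The paper does not actually supply a proof of this proposition: it is introduced with the phrase ``we need to recall the following fact'' and stated without argument, being regarded as a standard local volume estimate. Your proposal therefore cannot be compared to a proof in the paper, but on its own merits it is correct. The exponential-map argument is sound: closedness gives a uniform positive lower bound on the injectivity radius, Gauss's lemma identifies $\partial B_g(p,r)$ with $\exp_p(\{|v|=r\})$, and the $n$-dimensional and $(n-1)$-dimensional Jacobians of $\exp_p$ are continuous on the compact set $\{(p,v): p\in M,\ |v|\le r_0\}$ (the area-distortion factor extends continuously across $v=0$ with value $1$ since $d(\exp_p)_0=\mathrm{id}$), hence bounded, and the change of variables delivers both inequalities with constants independent of $p$. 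The Bishop--Gromov alternative is equally valid once one observes that compactness furnishes a uniform lower Ricci bound. One small stylistic note: you write $C=\max\{C_0,nC_1\}\,\omega_n$, which works, but since the statement only asks for a single constant $C$ serving both inequalities for $r\le r_0$, any common upper bound suffices and no optimization is needed.
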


\begin{lemma}\label{diamandvol}
There is a constant $C(g)$ independent of $j,d$
such that diameter $\diam{({{\tilde{W}_j}})}$ and volume $\vol{({{\tilde{W}_j}})}$ of ${{\tilde{W}_j}}$ satisfy
\[
d\leq \diam{({{\tilde{W}_j}})} < C(\delta+d) \text{  and  } \vol{({{\tilde{W}_j}})} < C(\delta^n + d\delta^{n-1}).
\]
\end{lemma}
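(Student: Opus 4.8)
The plan is to exploit the rotationally symmetric structure of $\tilde{W}_j$: it is a ``surface of revolution'' over the profile curve $\tilde{\gamma}$, the circle fibers of the classical picture being replaced by the geodesic spheres of $(M,g)$, so that both estimates should drop out of this description together with the length bound for $\tilde{\gamma}$ already proved. Concretely, I would first parameterize the part of $\tilde{W}_j$ lying over $s\in[0,L]$ by $\Phi(s,\omega)=\bigl(\tilde{t}(s),\exp_p(\tilde{r}(s)\,\omega)\bigr)$, $\omega\in\bS^{n-1}\subset T_pM$ (over $s\le 0$ the well is isometric to a subregion of $M$, which is harmless for the bounds). Since $\tilde{\gamma}$ is parameterized by arclength, $\dot{\tilde{t}}^2+\dot{\tilde{r}}^2=1$, and by the Gauss lemma the radial unit field is $g$-orthogonal to every geodesic sphere; a short computation then shows the induced metric is block diagonal,
\[
\tilde{g}_j=ds^2+g_{\tilde{r}(s)},
\]
where $g_\rho$ denotes the metric on $\partial B_g(p,\rho)$. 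Hence $s$ is arclength along every meridian $\{\omega=\mathrm{const}\}$, each fiber $\{s=\mathrm{const}\}$ is isometric to $(\partial B_g(p,\tilde{r}(s)),g_{\tilde{r}(s)})$, and the parameterization degenerates only on the measure-zero set $\{\tilde{r}=0\}$, where $\tilde{\gamma}$ is vertical so the metric extends smoothly across the tip.

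For the diameter I would first note that throughout the construction $\tilde{\theta}\in[0,\tfrac{\pi}{2})$, so $\dot{\tilde{r}}=-\cos\tilde{\theta}\le 0$ and $\tilde{r}$ is nonincreasing; thus $\tilde{r}(s)\le\tilde{r}(-2\delta)=2\delta$, and we may take $\delta$ small enough that $2\delta\le r_0$ with $r_0$ as in \Cref{propvol}. For the upper bound, given $P_1,P_2\in\tilde{W}_j$ over parameters $s_1,s_2$, move $P_1$ along its meridian to the fiber $\{s=s_2\}$ (length $|s_1-s_2|\le\len(\tilde{\gamma})\le C_3\delta+d$ by \Cref{scalandlen}) and then inside that fiber (diameter $\le C\,\tilde{r}(s_2)\le 2C\delta$ by \Cref{princurv}), whence $\diam(\tilde{W}_j)\le C(\delta+d)$. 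For the lower bound, the function $F(y,q)=(\sin\widehat{\theta})\,y-(\cos\widehat{\theta})\,\lVert q\rVert_g$ is $1$-Lipschitz on $\bR\times B$, hence on $\tilde{W}_j$; along the straight subarc of $\tilde{\gamma}$ one computes $\tfrac{d}{ds}F(\Phi(s,\omega))=\cos(\tilde{\theta}-\widehat{\theta})\equiv 1$, so $F$ oscillates on $\tilde{W}_j$ by at least the length $d$ of that subarc (up to the arbitrarily small smoothing scale, which never enters the constants), giving $\diam(\tilde{W}_j)\ge d$.

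For the volume, the block-diagonal form of $\tilde{g}_j$ and Fubini give
\[
\vol(\tilde{W}_j)=\int \vol\bigl(\partial B_g(p,\tilde{r}(s))\bigr)\,ds\le \len(\tilde{\gamma})\cdot\sup_s\vol\bigl(\partial B_g(p,\tilde{r}(s))\bigr).
\]
By \Cref{propvol} together with $\tilde{r}(s)\le 2\delta$ we have $\vol(\partial B_g(p,\tilde{r}(s)))\le C\,\tilde{r}(s)^{n-1}\le C(2\delta)^{n-1}$, and by \Cref{scalandlen} (which rests on \Cref{length2}) $\len(\tilde{\gamma})\le C_3\delta+d$; multiplying yields $\vol(\tilde{W}_j)\le C(\delta^n+d\,\delta^{n-1})$.

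The only step I expect to require genuine care is the structural identity $\tilde{g}_j=ds^2+g_{\tilde{r}(s)}$ from the first paragraph — that the meridians are unit speed, that there are no cross terms, and that the apparent singularity at $\tilde{r}=0$ is harmless — since it is precisely this that collapses the diameter bound to a triangle-inequality count and the volume bound to a one-line Fubini computation; everything else is bookkeeping with the estimates for $\tilde{\gamma}$ established above.
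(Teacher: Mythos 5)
Your argument is correct and follows essentially the same route as the paper: the diameter upper bound is a triangle-inequality count in the rotationally-symmetric product metric $ds^2+g_{\tilde{r}(s)}$ (the paper routes through the tip $\tilde{\gamma}(L)$, a cosmetic variant of your meridian-then-fiber path), and the volume bound is the same Fubini computation $\vol(\tilde{W}_j)=\int|\partial B(p,\tilde{r}(s))|\,ds$ combined with \Cref{propvol}. The one place you do more than the paper is the lower bound $d\le\diam(\tilde{W}_j)$: the paper merely asserts this ``by construction,'' whereas your $1$-Lipschitz test function $F(y,q)=\sin\widehat{\theta}\,y-\cos\widehat{\theta}\,\lVert q\rVert_g$, which increases at unit speed along the straight subarc, gives an actual proof (up to the $O(\alpha)$ smoothing loss, which you correctly flag as negligible and which is equally implicit in the paper's assertion).
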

\begin{proof}
Let $p,q\in {\tilde{W}_j}$ be two points and let $x$ be the point at the tip of ${\tilde{W}_j}$, i.e., corresponding to $\tilde{\gamma}(L)$. 
By the triangle inequality and \Cref{princurv} we have
\[
d_{g_j}(p,q) \leq d_{g_j}(p,x) + d_{g_j}(x,q) \leq \text{length}(\tilde{\gamma}) + \text{length}(\tilde{\gamma}) \leq C(\delta+d).
\]
By construction we have $d\leq\diam{({W})}$. Therefore,
$d\leq\diam{({W})} < C(\delta+d).$

By possibly taking $\delta$ smaller, we have by \Cref{scalandlen} and \Cref{propvol} that
\[
\vol{({\tilde{W}_j})}  = \int_{\frac{-\delta_0}{2}}^L |\partial B(p,r(s))|_{\tilde{g}_j} ds \leq \int_{\frac{-\delta_0}{2}}^L C\delta^{n-1} ds \leq C(\delta^n+\delta^{n-1}d).
\]
\end{proof}
\begin{lemma}\label{smoothattach}
$({\tilde{W}_j},\tilde{g}_j)$ is isometric to $W_j=(B_g(p,2\delta),g_j=dF^2_j+g)$ and $W_j$ attaches smoothly to $M$.
\end{lemma}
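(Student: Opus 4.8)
The plan is to recognize the smoothed curve $\tilde\gamma$ as the graph of a single function $t=f_j(r)$ over $r\in[0,2\delta]$; this displays $\tilde{W}_j$ as a graph over the ball $B_g(p,2\delta)$ and reduces the lemma to a one-line pullback computation together with the observation that $f_j$ is constant near $r=2\delta$.

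First I would verify that the angle function stays in $[0,\tfrac\pi2)$ along the whole of $\tilde\gamma$. Before smoothing this is built into the construction: $\theta$ increases only up to the value $\widehat{\theta}<\tfrac\pi2$ chosen in \eqref{stopangle}, stays constant (and $<\tfrac\pi2$) on the segment of length $d$, and then decreases monotonically back to $0$ over the final arcs, never dropping below $0$. Since $\tilde\theta\to\theta$ uniformly as $\alpha\to0$, and the explicit formulas for $\tilde k$ on the last two arcs (with the redefinition of $s_{m+3}$ and the choice of $\beta$) keep $\tilde\theta\ge0$, for $\alpha$ small enough we get $0\le\tilde\theta(s)<\tfrac\pi2$ on $[-2\delta,L]$. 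By \eqref{curve} this gives $\tilde r'(s)=-\cos\tilde\theta(s)<0$ on all of $[-2\delta,L]$, so $\tilde r$ is a diffeomorphism of $[-2\delta,L]$ onto $[0,2\delta]$ with $-2\delta\mapsto2\delta$ and $L\mapsto0$. I would then define $f_j:=\tilde t\circ\tilde r^{-1}\colon[0,2\delta]\to\bR$, which is $C^\infty$ (including at the endpoints, since $\tilde r'\ne0$), together with the radial function $F_j:=f_j(\|\cdot\|_M)$ on $B_g(p,2\delta)$. Because $\tilde\gamma$ is a vertical segment near each of its ends --- $\tilde t\equiv0$ on the initial segment, and $\tilde\theta\equiv0$, $\tilde k\equiv0$ on the final segment terminating at the tip $\tilde r(L)=0$ --- the function $f_j$ is constant on a neighborhood of $r=0$ and on a neighborhood of $r=2\delta$; hence $F_j$ is smooth on $B_g(p,2\delta)$ (it is constant near $p$, where $\|\cdot\|_M$ fails to be smooth) and $F_j\equiv0$ on a collar of $\partial B_g(p,2\delta)$.

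Next I would produce the isometry. Strict monotonicity of $\tilde r$ shows that for each $q\in B_g(p,2\delta)$, writing $r=\|q\|_M$, there is exactly one $y$ with $(y,r)\in\tilde\gamma$, namely $y=f_j(r)$. Hence $\Phi_j\colon B_g(p,2\delta)\to X$, $\Phi_j(q)=\bigl(F_j(q),q\bigr)$, is a smooth injection whose image is exactly $\tilde{W}_j$; its inverse is the restriction of the projection $X\to B$, so $\Phi_j$ is a diffeomorphism onto $\tilde{W}_j$ (in particular $\tilde{W}_j$ is an embedded submanifold diffeomorphic to the ball). Since $\tilde\iota_j\circ\Phi_j$ sends $q$ to $(F_j(q),q)$, it pulls $dt$ back to $dF_j$ and pulls the $B$-metric back to $g$ (the ambient metric $dt^2+g$ has no cross term), so
\[
\Phi_j^*\tilde g_j=\Phi_j^*\tilde\iota_j^*(dt^2+g)=(\tilde\iota_j\circ\Phi_j)^*(dt^2+g)=dF_j^2+g=:g_j,
\]
i.e.\ $\Phi_j$ is an isometry $(B_g(p,2\delta),g_j)\to(\tilde{W}_j,\tilde g_j)$, which is the first assertion; moreover $g_j$ is a genuine smooth Riemannian metric on the ball, being visibly smooth off $p$ and equal to $g$ near $p$ (where $dF_j\equiv0$).

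For the smooth attachment I would use only that $F_j\equiv0$ on a collar of $\partial B_g(p,2\delta)$ inside the ball: there $g_j=dF_j^2+g=g$, so the metric $h$ on $N=M$ given by $h=g$ on $M\setminus B_g(p,2\delta)$ and $h=g_j$ on $B_g(p,2\delta)$ coincides with the smooth metric $g$ on a full neighborhood of $\partial B_g(p,2\delta)$ and is therefore $C^\infty$. The one step I expect to require genuine care is the uniform bound $\tilde\theta<\tfrac\pi2$ surviving the smoothing, since it is precisely what makes $\tilde r$ strictly monotone and hence $f_j$ well defined and smooth; everything downstream is routine bookkeeping with the explicitly prescribed curve and the relations \eqref{curve}.
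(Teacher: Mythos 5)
Your argument is correct and takes essentially the same route as the paper: both define $F_j=\tilde t\circ\tilde r^{-1}\circ\|\cdot\|_g$, use the vertical segments of $\tilde\gamma$ near $r=0$ and $r=2\delta$ to get smoothness of $F_j$ (constant near $p$, identically $0$ near $\partial B$), exhibit $\tilde W_j$ as the graph of $F_j$ via $\Phi_j(q)=(F_j(q),q)$, and pull back $dt^2+g$ to $dF_j^2+g$. Your only genuine addition is the explicit verification that $\tilde\theta\in[0,\tfrac\pi2)$ survives the smoothing so that $\tilde r'=-\cos\tilde\theta<0$ on all of $[-2\delta,L]$; the paper asserts $\tilde r'<0$ without re-deriving it there (it is guaranteed in the smoothing section by taking $\alpha$ small enough so that $\tilde\theta(s_{m+1})$ still satisfies \eqref{stopangle}), so this is a useful sanity check rather than a new idea.
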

\begin{proof}
Let $B=B_g(p,2\delta)$ and recall that $||q||_g$ is the distance from $q$ to $p$ in $B$. Consider the function $F_j:B\to \bR$, $F_j(q)=\tilde{t}\left(\tilde{r}^{-1}(||q||_g)\right)$. By construction $\tilde{t}$ is smooth and $\tilde{r}'(s) <0$ so $\tilde{r}^{-1}$ is smooth. Moreover, $||q||$ is smooth away from  $p$. Thus, away from $p$, $F$ is smooth. In a neighborhood of $p$ we have by construction that $(\tilde{t}(s),\tilde{r}(s))$ is a vertical line segment so in that neighborhood $\tilde{t}\circ\tilde{r}^{-1}\equiv const$ and so $F_j$ is smooth everywhere. Furthermore, by construction, we have that
\[
 \tilde{g}_j|_E=g|_E \text{ where } E=B_g(p,2\delta)\setminus B_g(p,\delta).
 \]
 
 Let $\Gamma_j=\{(t,p)\in X:F_j(p)=t\}$. Note that $\Gamma_j\subset X$ and that $\Gamma_j={\tilde{W}_j}$. Let $g'_j=(\iota'_j)^*(dt^2+g)$ where $\iota'_j:{\tilde{W}_j}\to \bR \times B$ is the inclusion map $\iota'_j(t,p)=(t,p)$. Let $id_j:\Gamma_j\to {\tilde{W}_j}$ be the identity map and conclude that $g'_j=\tilde{g}_j$. Consider the diffeomorphism $\Phi_j:B\to \Gamma_j$ where $\Phi_j(q)\mapsto (F_j(q),q)$. And so 
\[
\Phi_j^*g'_j = \Phi^*((\iota'_j)^*g'_j) = dF^2_j + g.
\]
\end{proof}
And this completes the construction of $N$ from \Cref{propW} (Constructing Wells). 

\subsection{Constructing a Tunnel} \label{subT}
We will pick up the construction of the tunnel from \Cref{rmkT}. Let $\gamma$ be as it is before \Cref{rmkT}. The same smoothing procedure as above can be used to smooth $\gamma$ into a smooth curve. We will abuse notation and call this smoothed-out curve $\tilde{\gamma}$ as well.

Let $g\in C^\infty(\bR)$ be the smooth function so that
$g$ is $0$ if $s<0$, $1$ if $s>1$, and strictly increasing on $[0,1]$. Let $h(x)=g(1-x)$ and $H=\int_0^1 h(x) dx$. Let $\tilde{k}(s)$ be the smooth function defined by 
\[
\widetilde{k}(s)=\begin{cases}
g\left(\frac{s}{\alpha}\right) & s\in\left[-\frac{\delta_0}{2},\alpha\right]\\
1& s\in [\alpha,s_{0}-\alpha]\\
(1-k_1)h\left(\frac{s-s_0}{\alpha}\right) +k_1 &s\in[s_{0}-\alpha,s_0]\\
k_1& s\in[s_0,s_1]\\
\left(k_{i+1}-k_i\right)g\left(\frac{s-s_i}{\alpha}\right) + k_i & s\in[s_i,s_i+\alpha]\\
k_{i+1}& s\in [s_i+\alpha,s_{i+1}]\\
\end{cases}
\]
where $1\leq i\leq m$.

Note that $\tilde{\theta}(s_{m+1})$ could no longer equal $\frac{\pi}{2}$ by the smoothing process. We fix that now. We note that $\tilde{\theta}(s)$ converges uniformly to $\theta(s)$ as $\alpha$ goes to zero. Take $\alpha$ be small enough such that $\tilde{\theta}(s_{m}+\alpha)<\frac{\pi}{2}.$ 

We want  $\tilde{\theta}(s_{m+1}) < \frac{\pi}{2}$. Therefore, if not, then $\tilde{\theta}(s_{m+1}) \geq \frac{\pi}{2}$. Pick a $s^*\in (s_{m}+\alpha,s_{m+1}]$ such that $\frac{\pi}{2}-\alpha<\tilde{\theta}(s^*)<\frac{\pi}{2}$ which exists by the intermediate value theorem and redefine $s_{m+1}=s^*$. 

Let $s_{m+2}=s_{m+1}+2\beta$. On $[s_{m+1},s_{m+2}]$, define
\begin{align*}
    \tilde{k}(s) = \begin{cases}
    -k_{m+1}g\left(\frac{s-s_{m+1}}{\beta}\right)+k_{m+1} & s\in [s_{m+1},s_{m+1}+\beta]\\
    0 & s\in [s_{m+1}+\beta,s_{m+2}],
    \end{cases}
\end{align*}
where $\beta = \frac{\frac{\pi}{2}-\tilde{\theta}(s_{m+1})}{-k_{m+1}(1-H)}$ so that
\[
\int_{s_{m+1}}^{s_{m+1}+\beta} \tilde{k}(s) ds =\frac{\pi}{2}-\tilde{\theta}(s_{m+1}).
\]
Thus, $\tilde{\theta}(s)=\frac{\pi}{2}$ for all $s\in\left[s_{m+1}+\beta,s_{m+2}\right]$. Moreover, we have finished smoothing $\gamma$ to $\tilde{\gamma}$.

Define a half tunnel $A_j=\left\{(y,q)\in X : \left(y,||q||_g\right)\in \tilde{\gamma} \right\}$ with the induced metric. Later, we will glue two half tunnels together to make a tunnel $T_j$. In the following lemma, we record properties of $A_j$ whose proofs are analogous to the ones above.

\begin{lemma} \label{halftunnellemma}
There is a constant $C$ independent of $j$ such that
$(A_j,h_j)$ satisfies the following
\begin{enumerate}
    \item The scalar curvature $R^j$ of $A_j$ satisfies $R^j>\kappa - \frac{1}{j}$.
    \item $\diam{(A_j)} < C(\delta)$.
    \item $\vol{(A_j)} < C(\delta^n)$.
    \item $A_j$ smoothly attaches to $M\setminus B_g(p,2\delta)$ 
    \item The new manifold $(M\setminus B_g(p,2\delta))\sqcup A_j$ is a manifold with boundary.
\end{enumerate}
\end{lemma}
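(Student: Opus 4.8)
The plan is to transcribe the three verification lemmas for the well --- \Cref{scalandlen} (scalar curvature and arclength), \Cref{diamandvol} (diameter and volume), and \Cref{smoothattach} (smooth attachment) --- to the curve $\tilde\gamma$ constructed in this subsection, exploiting two structural simplifications: no straight segment of length $d$ is appended, so every bound improves to one of the form $C\delta$ with no $d$-term; and $\tilde\gamma$ now terminates with a flat arc on which $\tilde\theta\equiv\tfrac{\pi}{2}$ (so $\tilde r$ is constant there and stays positive, $\tilde r(s_{m+2})>\tfrac{r_m}{2}>0$) instead of being continued down to $r=0$ as in the well.

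For (1) I would feed $\tilde\gamma$ into \Cref{scalposlem} interval by interval, exactly as in the proof of \Cref{scalandlen}. On the initial vertical segment $\tilde\theta\equiv0$, so (\ref{scal}) gives $R^{A_j}=R^M\ge\kappa$; on $[0,s_0]$ the defining property of $s_0$ together with $\tilde k<1$ and small $\delta_0$ gives $R^{A_j}>\kappa-\tfrac{1}{j}$ directly from (\ref{scal}); on $[s_0,s_1]$ and each $[s_i,s_{i+1}]$ with $1\le i\le m$ one checks (\ref{scalpos}) for $\alpha$ small, using that $\tfrac{\sin\theta}{4r}>k$ holds along $\gamma$ and that $\tilde k$ is squeezed between consecutive $k_i$. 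The only new interval is $[s_{m+1},s_{m+2}]$: there $\tilde k$ is non-negative and at most $k_{m+1}$ while $\tfrac{\sin\tilde\theta}{4\tilde r}$ is non-decreasing and already exceeds $k_{m+1}$ at $s_{m+1}$ (this being the inequality already verified in the construction, with $\tilde\theta(s_{m+1})$ within $\alpha$ of $\tfrac{\pi}{2}$ and $\tilde r(s_{m+1})<r_m$), so (\ref{scalpos}) holds on $[s_{m+1},s_{m+1}+\beta]$ and is trivial on the final flat piece where $\tilde k\equiv0$. \Cref{scalposlem} then gives $R^{A_j}>\kappa$ off $[0,s_0]$, hence $R^{A_j}>\kappa-\tfrac{1}{j}$ on all of $A_j$.

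For (2)--(3) I would repeat the arclength count of \Cref{length2}: the bound $s_m\le\tfrac{28}{5}\delta$ coming from (\ref{len}) and \Cref{length1} is unchanged, $\Delta s_{m+1}\le C\delta_0$ as before, and the extra smoothing length $\beta$ tends to $0$ with $\alpha$, so since no length-$d$ segment is added, $\text{length}(\tilde\gamma)\le C\delta$. Every cross-section of $A_j$ over parameter $s$ is isometric to the geodesic sphere $\partial B_g(p,\tilde r(s))$ with $\tilde r(s)\le 2\delta$, so (after shrinking $\delta$) \Cref{propvol} bounds its $(n-1)$-volume by $C\delta^{n-1}$ and its diameter by $C\delta$; integrating over an $s$-interval of length $\le C\delta$ and running the triangle-inequality argument of \Cref{diamandvol} through a fixed cross-section yields $\diam(A_j)<C\delta$ and $\vol(A_j)<C\delta^{n}$. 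For (4)--(5): near the wide end $\tilde\gamma$ is the vertical segment $\tilde t\equiv0$ with $\tilde r$ strictly decreasing, so $F_j:=\tilde t\circ\tilde r^{-1}$ is smooth and identically $0$ near $\|q\|_g=2\delta$, and \Cref{smoothattach} applies verbatim to show that there $A_j$ is the graph $\{(0,q)\}$ with induced metric $g$, hence $\tilde g_j|_E=g|_E$ and $A_j$ attaches smoothly to $M\setminus B_g(p,2\delta)$ along $\partial B_g(p,2\delta)$; since $\tilde r>0$ everywhere the fibers are honest $(n-1)$-spheres and $A_j$ is diffeomorphic to $\bS^{n-1}\times[0,1]$, a smooth manifold with boundary $\partial B_g(p,2\delta)\sqcup\bS^{n-1}(\tilde r(s_{m+2}))$, so gluing the first boundary sphere to $M\setminus B_g(p,2\delta)$ produces a manifold with boundary $\bS^{n-1}(\tilde r(s_{m+2}))$ --- the cross-section that \Cref{propT} will later match to a second half-tunnel.

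I expect the only genuinely new point --- and it is a mild one --- to be the behaviour on the terminal interval $[s_{m+1},s_{m+2}]$ in the curvature estimate, since that is the single place where the half-tunnel curve differs from the well curve; the short computation indicated above shows the bend there is gentle enough that (\ref{scalpos}) survives, and everything else is a line-by-line copy of the well estimates with the $d$-dependence deleted, which is why the lemma is reasonably left as ``analogous to the ones above.''
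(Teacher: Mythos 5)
Your proposal is correct and is precisely the adaptation the paper intends when it says the proofs of \Cref{halftunnellemma} are ``analogous to the ones above'': you walk \Cref{scalposlem}, \Cref{length2}, \Cref{diamandvol}, and \Cref{smoothattach} through the half-tunnel curve interval by interval, correctly identifying the only genuinely new check (the terminal interval $[s_{m+1},s_{m+2}]$, which you dispose of since $\tilde k\le k_{m+1}<\frac{\sin\tilde\theta}{4\tilde r}$ there and $\tilde k\equiv0$ on the final flat arc where $\tilde\theta\equiv\tfrac{\pi}{2}$), correctly deleting all $d$-dependence since no straight segment is appended, and correctly handling the fact that there is no tip (triangle inequality through a fixed cross-section rather than through $r=0$) and that $\tilde r$ stays positive so the fibers are honest spheres and $A_j\cong\bS^{n-1}\times[0,1]$ is a manifold with two boundary spheres, one of which attaches to $M\setminus B_g(p,2\delta)$ via the verbatim argument of \Cref{smoothattach}. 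No gap.
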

We have constructed half of a tunnel, $A_j$. We now wish to modify the metric at the end of $A_j$ so that it is a product metric of a round sphere and an interval. We follow the same procedure as \cite{D}. Let $a=t(s_{m+1}+\beta)$, $b=t(s_{m+2})$, and $c=r(s_{m+2})$. We note that, by construction, the induced metric on $\{(q,y)\in X: a\leq t\leq b\}$ is $h_0=g_c+dt^2$, where  $g_c$ is the induced metric on $\bS^{n-1}(c)$. Let $h_1=c^2g_{rd}+dt^2$ where $g_{rd}$ is the round metric on the  unit round sphere. Let $\phi(t)=\psi\left(\frac{t-a}{\eta}\right)$ where $\psi(u)$ is a smooth function on $[0,1]$ vanishing near zero, increasing to 1 at $u=\frac{3}{4}$ and equal to 1 for $u>\frac{3}{4}$. Define the metric $h$ for $t\in[a,b]$ as
\[
h(q,y)=g_c(q,y)+\phi(t)\left(c^2g_{rd}-g_c\right)+dt^2.
\]
This metric transitions smoothly between $h_0$ and $h_1$. Note
\[
h-h_0=\phi(t)\left(c^2g_{rd}-g_c\right) = \phi(t)c^2\left(g_{rd}-\frac{1}{c^2}g_c\right)
\]
and that the first and second derivatives of $\phi(t)$ are $O(\eta^{-1})$ and $O(\eta^{-2})$, respectively. So by \Cref{princurv}, we have that the second derivatives of $h-h_0$ are $O(\eta^2)$. Therefore, for $\eta$ small enough, the scalar curvature of $h$ is close to the scalar curvature of $h_0$ which, again by \Cref{princurv}, has scalar curvature larger than $\kappa-\frac{1}{j}$ for small enough $\eta$. Therefore, we have changed the metric at the end of $A_j$ so that it looks like $c^2g_{rd}+dt^2$. Thus, given another ball $B_g(p',2\delta)$ on $M$ we can construct $A'_j$ with a metric at the one end that it looks like $c^2g_{rd}+dt^2$ with the same $c$ by making the same choices in the construction as we did for $A_j$. Now we can immediately glue a cylinder, $([0,d]\times \bS^{n-1}, dt^2+c^2g_{rd})$, connecting $A'_j$ to $A_j$ and so construct the tunnel $T_j$ between $\partial B_g(p',2\delta)$ and  $\partial B_g(p,2\delta)$.

We note that the diameter and volume of the cylinder $([0,d]\times \bS^{n-1}, dt^2+g_{S^{n-1}})$ are bounded by $d$ and $C(n)d\delta^{n-1}$, respectively, where $C(n)$ is a constant that only depends on the dimension. Therefore, we can conclude that $\diam(T_j)$ and $\vol(T_j)$ satisfy the bounds in \Cref{propT}. Therefore, this completes the construction for \Cref{propT}.

\section{Manifolds with shrinking tunnels}\label{theoremsA}
In this section, we will use \Cref{propT} (\nameref{propT}) to construct sequences of manifolds with thinner and thinner long tunnels. Furthermore, we will prove Theorems \ref{minAMV} and \ref{minALL}.

We will need first the following preliminary results.

\begin{prop}\label{mwst1}
There exists a sequence of rotationally symmetric manifolds $M_j=(\bS^n,g_j)$, $n\geq 3$, such that $M_j$ satisfies 
\[
R^j\geq n(n-1)-\frac{1}{j}, \text{ }\diam\left( M_j\right) \leq D, \text{ and } \vol\left( M_j\right) \leq V,
\]
for some constants $0<D, V$ and converges to $M_\infty$ which is the disjoint union of two $n$-spheres.
\end{prop}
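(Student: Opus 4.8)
The plan is to build the sequence $M_j$ by starting with two copies of the round unit $n$-sphere and joining them with a long, thin tunnel produced by \Cref{propT}, then letting the tunnel become thinner (and fixed length, or controlled length) as $j\to\infty$. Concretely, fix the round sphere $(\bS^n,g_{rd})$ of constant scalar curvature $n(n-1)$. Pick a point $p$ on each of two disjoint copies, and apply \Cref{propT} with $\kappa=n(n-1)$, with radius parameter $\delta=\delta_j\to 0$, with the length parameter $d=d_j$ chosen to keep the diameter bounded (e.g. $d_j\equiv 1$, or any fixed $d$; the bound $\diam(T_j)<C(\delta_j+d_j)$ from \Cref{propT}(3) then gives a uniform diameter bound once we also observe the two sphere-minus-ball pieces have diameter $\le\pi$). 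This yields a manifold $P_j$ diffeomorphic to $\bS^n$ (connected sum of two spheres is a sphere), with scalar curvature $R^j>n(n-1)-\tfrac1j$ by \Cref{propT}(4). Since each removed ball $B_{g_{rd}}(p,2\delta_j)$ has volume $O(\delta_j^n)$ and the glued tunnel has volume $<C(\delta_j^n+d_j\delta_j^{n-1})\to 0$, we get $\vol(P_j)\le \vol(\bS^n\sqcup\bS^n)+1=2\omega_n+1=:V$ for $j$ large, and the diameter is bounded by $D:=2\pi+C(\delta_1+d_1)$, say. To phrase everything as metrics on a single $\bS^n$ as the statement demands, fix a diffeomorphism $\bS^n\cong \bS^n\#\bS^n$ once and for all and let $g_j$ be the pullback of the metric on $P_j$.

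Next I would identify the limit and the mode of convergence. The claim is that $M_j\to M_\infty=\bS^n\sqcup\bS^n$ (two disjoint round unit spheres). Here the natural tool is \Cref{thm4.6} (Lakzian–Sormani): take $U_j\subset M_j$ to be the complement of the tunnel and of the two small caps, i.e. $U_j=(\bS^n\setminus B(p,2\delta_j))\sqcup(\bS^n\setminus B(p',2\delta_j))$, which is isometric to the corresponding subregion $U_\infty\subset M_\infty$ via the identity (so one can take $\eps=0$ in the hypothesis of \Cref{thm4.6}, and $\lambda$ bounded by a constant times $\diam$ of the excised pieces plus the tunnel contribution, which $\to 0$ — more carefully, $\lambda$ measures the distortion of the intrinsic distance, and on $M_j$ two points in opposite spheres are at distance $\le 2\pi + \diam(T_j)$ while in $M_\infty$ they are "at infinity", so one must instead compare $M_j$ with the connected metric space $M_\infty'$ = two unit spheres joined at a point, or handle the disconnectedness by noting $d_j$ restricted to each component converges). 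The cleanest route: show $M_j\xrightarrow{\mathcal F} \bS^n\sqcup\bS^n$ by applying \Cref{thm4.6} to each of the two pieces separately (the intrinsic flat distance is estimated piecewise), with the "excess volume" terms $\vol_m(M_j\setminus U_j)+\vol_{m-1}(\partial U_j)\to 0$ controlling the error; then since $\mathbf M(M_j)=\vol(M_j)\to \vol(\bS^n\sqcup\bS^n)=\mathbf M(M_\infty)$, the convergence is in fact $\mathcal{VF}$. One should also check the limit current space is genuinely $\bS^n\sqcup\bS^n$ with its Riemannian current (nonzero on both components), so no cancellation occurs — this follows because the two caps shrink to points of measure zero.

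The main obstacle I expect is the bookkeeping around disconnectedness of the limit and making the Lakzian–Sormani estimate \Cref{thm4.6} apply cleanly: that theorem is stated for a pair of diffeomorphic manifolds with diffeomorphic subregions, whereas here $M_\infty$ is not diffeomorphic to $M_j$ and is disconnected. The fix is to not apply the theorem globally but to use it (or a direct flat-distance construction, building an explicit filling current $V$ between $\phi_{1\#}T_j$ and $\phi_{2\#}T_\infty$ in a common metric space $Z$ obtained by gluing) on each spherical piece, where $M_j$ restricted to (a neighborhood of) the $i$-th sphere minus its cap really is diffeomorphic to the $i$-th component of $M_\infty$ minus a point, and the discarded regions have volume $O(\delta_j^n)$ and boundary area $O(\delta_j^{n-1})$. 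The only quantitative inputs needed beyond \Cref{propT} are \Cref{propvol}-type volume bounds for the caps (immediate for the round sphere) and the observation that intrinsic distances on $U_j$ are $\le$ the round distances, so $\lambda\to 0$. Everything else — the scalar curvature lower bound, the diameter and volume bounds, the diffeomorphism type — is handed to us by \Cref{propT}.
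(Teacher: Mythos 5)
Your proposal is correct and follows essentially the same route as the paper: two round unit $n$-spheres joined by a Gromov--Lawson tunnel from \Cref{propT} with $\delta_j\to 0$, with the scalar curvature, diameter, and volume bounds read off directly from \Cref{propT}, and $\mathcal{VF}$-convergence deduced from the Lakzian--Sormani estimate \Cref{thm4.6} once the tunnel and cap volumes are seen to vanish. The subtlety you flag about invoking \Cref{thm4.6} for a disconnected, non-diffeomorphic limit is well taken --- the paper's proof applies the theorem directly with $U_j=M_j\setminus T_j$ without addressing the cross-component $\lambda$ issue --- and your proposed fix (estimate componentwise, or build the filling current explicitly) is the cleanest way to make that step rigorous.
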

\begin{proof}
We will construct the $M_j$ as the connected sum of two standard unit round $n$-spheres for which the tunnel that connects the two spheres gets skinnier as $j$ increases. By \Cref{propT}, we can remove a geodesic ball from both of the spheres and then construct a tunnel $T_j$ connecting the two spheres. Let $(N,h)=(N',h')=(\bS^n,g_{rd})$. Let $j\in\bN$, $j\geq 10$, $d=30$. Define
\[
B_j:=B_{h}\left(p,\frac{2}{j}\right)\subset N,  \text{ and } B':= 
B_{h'}\left(p',\frac{2}{j}\right)\subset N'
\]
where $B_j$ and $B'_j$ are geodesic balls in $N, N'$ respectively. By \Cref{propT}, we can construct a tunnel $T_{j}$ connecting $\partial B_j$ to $\partial B'_j$ and the resulting manifold $M_j$ will have the following properties:
\begin{enumerate}
    \item $M_j=\left(\left( N \sqcup N'\right) \setminus \left(B_j\cup B'_j \right)\right) \sqcup T_j$
    \item $R^j\geq n(n-1)-\frac{1}{j}$.
    \item $M_j\setminus T_j$ is isometric to $(N\setminus B_j) \sqcup (N'\setminus B'_j)$.
    \item $\diam{(M_j)}\leq 4\pi+30$,
    \[
    2\vol_{g_{rd}}{(\bS^n)} - \vol_h{(B_j)} - \vol_{h'}{(B'_j)} \leq \vol{(M_j)}\leq 2\vol_{g_{rd}}{(\bS^n)}+\vol_{g_j}{(T_j)},
    \] 
    and 
    \[
    \lim_{j\to\infty}\vol_h{(B_j)} = \lim_{j\to\infty}\vol_{h'}{(B'_j)} = \lim_{j\to\infty}\vol_{g_j}{(T_j)}=0.
    \]
    In particular, $\lim_{j\to\infty} \vol{(M_j)} = 2\vol_{g_{rd}}{(\bS^n)}. $
\end{enumerate}
By \Cref{thm4.6}, we have the intrinsic flat distance between $M_j$ and $N\sqcup N'$ is 
\[
d_\mathcal{F}(M_j,N_1\sqcup N_2)\lesssim \frac{1}{j}\left(\vol_{g{rd}}(\bS^n)+\vol_{g_{rd}}(\bS^{n-1})\right)+ \vol_h{(B_j)} + \vol_{h'}{(B'_j)}+ \vol_{g_j}{(T_j)}.
\]
As $j\to\infty0$, we that $\vol_h{(B_j)}$, $\vol_{h'}{(B'_j)}$, and $\vol_{g_j}{(T_j)}$ go to zero. Therefore, we conclude that $M_j$ converges to $N\sqcup N'$ in the $\mathcal{VF}$ sense.
\end{proof}

\begin{remark}
From the construction in \Cref{propT} (\nameref{propT}) we see that $M^n_j=([0,D_j]\times \bS^{n-1},g_j)$ defined above is rotationally symmetric. Moreover, near $\{0\}\times\bS^{n-1}$ and $\{D_j\}\times\bS^{n-1}$, we have that $M_j^n$ is isometric to the standard unit round $n$-sphere. In particular, the metric takes the form $g_j=dt^2+\sin^2(\rho_j(t))g_{\bS^{n-1}}$ where $D_j$ is the diameter of $M_j$ and for $\rho_j:[0,D_j]\to [0,\infty)$ is a smooth function with the following properties. Recall $\tilde{\gamma}_j=(\tilde{t}_j(s),\tilde{r}_j(s))$ to be the curve define in \Cref{halftunnellemma} that defines the half tunnel $A_j$. Then 
\[
\rho(t) =\begin{cases}
\hat{r}(t), & t\in \left[0,\frac{1}{2}D_j\right]\\
\hat{r}(D- t), & t\in\left[\frac{1}{2}D_j,D_j\right]
\end{cases}
\text{ and } \hspace{2.5pt}
\hat{r}(t) =\begin{cases}
\vspace{2pt}\pi-t, & t\in \left[0,\pi-\frac{2}{j}\right]\\
\tilde{r}(t+(\delta-\pi)), & t\in\left[\pi-\frac{2}{j},\frac{1}{2}D\right].
\end{cases}
\]
\end{remark}

We will now construct smooth $1$-Lipschitz maps $F_j:M^n_j\to (\bS^n,g_{rd})$. But first, we need the following result based on the mollification in \cite[Section 3]{M}. Since our lemma varies slightly from what is stated in \cite{M} we provide an analogous proof.

\begin{lemma}\label{convolution}
Let $h:\bR\to\bR$ be an L-Lipschitz continuous function such that
\[
h(t)=\begin{cases}
h_+(t), & t\in (0,\infty)\\
h_-(t), & t\in (-\infty,0),
\end{cases}
\]
where $h_+$ and $h_-$ are smooth functions.
Then for small enough $\eps>0$ there exists a function $h_\eps:\bR\to\bR$ such that 
\[
||h_\eps(t)-h(t)||_{C^2} \lesssim \eps^2, \hspace{.5em}  h'_\eps(t) \leq \sup\{h'(t):t\in\bR\setminus \{0\}\}, \hspace{.5em} \text{and} \hspace{.5em}  |h'_\eps(t)|\leq L.
\]
\end{lemma}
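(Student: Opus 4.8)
The plan is to take $h_\eps$ to be a standard mollification of $h$ by a symmetric bump function whose width shrinks like $\eps^2$. Concretely, I would fix once and for all an even function $\psi\in C_c^\infty(\bR)$ with $\psi\geq 0$, $\supp\psi\subseteq(-1,1)$ and $\int_\bR\psi\,ds=1$, and for $\sigma>0$ set $\psi_\sigma(s)=\sigma^{-1}\psi(s/\sigma)$. Given small $\eps>0$ I would put $\sigma=\eps^2$ and define
\[
h_\eps(t)=(h*\psi_\sigma)(t)=\int_{\bR} h(t-s)\,\psi_\sigma(s)\,ds .
\]

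The verification then splits into three routine checks. Since $h$ is locally integrable and $\psi_\sigma$ is smooth with compact support, differentiation under the integral sign shows $h_\eps\in C^\infty(\bR)$; and because $h$ is $L$-Lipschitz it is differentiable off the single point $0$ with $\|h'\|_{L^\infty}\leq L$, whence $h_\eps'=h'*\psi_\sigma$ by integration by parts. As $\psi_\sigma$ is a probability density this immediately gives $|h_\eps'(t)|\leq\|h'\|_{L^\infty}\leq L$ and $h_\eps'(t)=\int_\bR h'(t-s)\psi_\sigma(s)\,ds\leq\sup_{u\neq 0}h'(u)$, which are exactly the two derivative estimates in the statement. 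For the $C^0$ bound, using $\int\psi_\sigma=1$ and $\supp\psi_\sigma\subseteq(-\sigma,\sigma)$,
\[
|h_\eps(t)-h(t)|=\Big|\int_\bR\big(h(t-s)-h(t)\big)\psi_\sigma(s)\,ds\Big|\leq L\int_\bR|s|\,\psi_\sigma(s)\,ds\leq L\sigma=L\eps^2 .
\]

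For the second-derivative estimate I would argue on the two regions separately. On $\{\,|t|\geq\eps^2\,\}$ the convolution only sees one of the smooth branches $h_+$ or $h_-$, so there $h_\eps$ equals $h_\pm*\psi_\sigma$; expanding $h_\pm$ by Taylor's theorem about $t$ and using that $\psi$ is even, so that $\int s\,\psi_\sigma(s)\,ds=0$, kills the first-order term and yields $\|h_\eps-h\|_{C^2(\{|t|\geq\eps^2\})}\lesssim\sigma\,(\|h_+\|_{C^3}+\|h_-\|_{C^3})\lesssim\eps^2$, with the implied constant depending only on $\psi$ and on $h$. On the remaining slab $\{\,|t|<\eps^2\,\}$, where $h$ itself need not be twice differentiable, $h_\eps$ is simply a smooth interpolation between the two branches whose first two derivatives stay controlled by $L$ and by the Lipschitz constants of $h_+'$ and $h_-'$, which is all that is used downstream. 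The one point requiring care — and the only obstacle worth flagging — is precisely this last clause: since $h$ is a priori only Lipschitz at $0$, the symbol $\|h_\eps-h\|_{C^2}$ must be read as the $C^2$ comparison on the region where $h$ is $C^2$ together with the uniform derivative bounds near the corner, exactly as in \cite[Section 3]{M}. Once that bookkeeping is in place, assembling the three displayed estimates completes the proof.
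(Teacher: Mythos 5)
Your construction is genuinely different from the paper's, and the difference is not cosmetic. You take a global fixed-width mollification $h_\eps = h * \psi_\sigma$ with $\sigma = \eps^2$; the paper instead convolves with a \emph{variable-width} kernel $h_\eps(t) = \int h\bigl(t - \sigma_\eps(t) s\bigr)\varphi(s)\,ds$ whose width $\sigma_\eps(t) = \eps^3\sigma(t/\eps)$ is identically zero for $|t|\geq \eps/2$, is constant of size $\eps^3/100$ on $|t|<\eps/4$, and transitions smoothly in between. The essential consequence of the paper's choice, which your version does not have, is that $h_\eps \equiv h$ on $\{|t|\geq\eps/2\}$: the smoothing is supported in a shrinking neighborhood of the corner and changes nothing away from it. This exactness is used verbatim in the place the lemma is invoked — in the proof of Lemma \ref{mwst2} the identities $f_{j,\eps}(t)=f_j(t)$ on $[0,\pi - t_j - 20\eps]$ and again on $[D_j - \pi/2, D_j]$ are what make the subsequent pointwise comparisons with $\rho_j$ go through. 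With a global mollification you would have to re-run those comparisons with an error term, which is doable but is extra work you have not done; so the proposal proves a strictly weaker statement than what the downstream proof actually relies on.

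There is also a concrete error in your argument on the slab $\{|t|<\eps^2\}$: you assert that there $h_\eps$ is ``a smooth interpolation whose first two derivatives stay controlled by $L$.'' That is false. If $h'$ jumps by $\Delta = |h'(0^+)-h'(0^-)|>0$ at the corner, then $h''_\eps = h' * \psi'_\sigma$ picks up a contribution of size $\Delta\cdot\|\psi'_\sigma\|_{L^1(\bR)}\cdot\|\psi_\sigma\|_\infty^{0}$; more directly, $h''_\eps$ must transition $h'_\eps$ across a gap of width $\Delta$ over an interval of length $\sim\sigma$, so $\|h''_\eps\|_{L^\infty(|t|<\sigma)} \gtrsim \Delta/\sigma = \Delta\,\eps^{-2}$, which blows up. (The same blow-up occurs in the paper's construction, at rate $\eps^{-3}$; the point is that \emph{any} smoothing of a genuine Lipschitz corner has this feature, and the $C^2$ clause in the lemma can only be read away from the mollified slab, exactly as you correctly flag in your last sentence — but then your claim of a uniform bound there contradicts your own caveat.) The two derivative inequalities $|h'_\eps|\leq L$ and $h'_\eps\leq\sup h'$ you get correctly and by the same averaging observation as the paper; your $C^0$ bound $\lesssim L\eps^2$ is also fine, if slightly weaker than the paper's $\lesssim\eps^3$ coming from its narrower kernel.

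In short: the proposal is a reasonable standard-mollification proof of the literal conclusions (modulo the incorrect sentence about bounded second derivative at the corner), but it chooses a globally supported kernel and so discards the local-support property that the paper's proof is really designed to produce and that the application in Lemma \ref{mwst2} depends on.
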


\begin{proof} Let $0<\eps_0<1$. We will restrict our attention to $(-\eps_0,\eps_0)$. Let $\varphi\in C^\infty_c([-1,1])$ be the standard mollifier in $\bR$ such that 
\[
0\leq\varphi\leq 1 \qquad \text{and} \qquad \int_{-1}^1 \varphi(t)dt=1.
\]
Let $\sigma(t)\in C^\infty_c\left(\left[-\frac{1}{2},\frac{1}{2}\right]\right)$ be another bump function such that
\begin{align*}
&0\leq \sigma(t) \leq \frac{1}{100}\text{ for }  t\in\bR,\\
&\sigma(t)=\frac{1}{100} \text{ for } |t|<\frac{1}{4},\\
&0<\sigma(t) \leq \frac{1}{100} \text{ for } \frac{1}{4}<|t|<\frac{1}{2}.    
\end{align*}

Let $0<\eps<\frac{1}{10}\eps_0$. Define $\sigma_\eps(t) = \eps^3\sigma\left(\frac{t}{\eps}\right).$ Moreover, define
\begin{equation}\label{conv}
\begin{split}
    h_\delta(t) &= \int_\bR h(t-\sigma_\delta(t)s)\varphi(s)ds,\qquad t\in\left(-\eps_0, \eps_0\right)\\
  &=\begin{cases}
  \int_\bR h(s) \cdot \frac{1}{\sigma_\delta(t)} \varphi\left(\frac{t-s}{\sigma_\delta(t)}\right) ds,  & \sigma_\delta(t) >0\\
  h(t), & \sigma_\delta(t)=0.
  \end{cases}
  \end{split}
\end{equation}
Now we want to compute $h'_\delta(s)$. For $\left|t\right|>\frac{\eps^3}{100}$,
\begin{align*}
         h'_{\eps}(t) &= \frac{d}{dt} \int_{\bR} h(t-\sigma_{\eps}(t)s)\varphi(s)ds \\
        &=\int_{\bR} h'(t-\sigma_{\eps}(t)s)\left(1-s\eps^2\sigma'\left(\frac{t}{\eps}\right)\right)\varphi(s)ds.
\end{align*}

For $|t|<\frac{\eps}{4}$,
\begin{align*}
        h'_\delta(t) &= \frac{d}{dt}  \int_\bR h(s) \cdot \frac{1}{\sigma_\eps(t)} \varphi\left(\frac{t-s}{\sigma_\eps(t)}\right) ds\\
        &=\int_\bR h(s)\cdot \frac{d}{dt}\left(\frac{1}{\sigma_\eps(t)} \varphi\left(\frac{t-s}{\sigma_\eps(t)}\right)\right)ds \\
        &= \int_\bR h(s)\cdot \frac{d}{dt}\left(\frac{100}{\eps^3} \varphi\left(\frac{100(t-s)}{\eps^3}\right)\right)ds\\
        &= (-1)\cdot \int_\bR h(s)\cdot \frac{d}{ds}\left(\frac{100}{\eps^3} \varphi\left(\frac{100(t-s)}{\eps^3}\right)\right)ds\\
        &= (-1)\cdot\int_{-\infty}^0 h_-(s)\cdot \frac{d}{ds}\left(\frac{100}{\eps^3} \varphi\left(\frac{100(t-s)}{\eps^3}\right)\right)ds \\
        &\qquad+ (-1)\cdot\int_0^\infty h_+(s)\cdot \frac{d}{ds}\left(\frac{100}{\eps^3} \varphi\left(\frac{100(t-s)}{\eps^3}\right)\right)ds\\
        &=\int_{-\infty}^0 h'_-(s)\cdot \left(\frac{100}{\eps^3} \varphi\left(\frac{100(t-s)}{\eps^3}\right)\right)ds \\
        &\qquad+ \int_0^\infty h'_+(s)\cdot \left(\frac{100}{\eps^3} \varphi\left(\frac{100(t-s)}{\eps^3}\right)\right)ds \\
        &=\int_\bR h'(s) \cdot \left(\frac{100}{\eps^3} \varphi\left(\frac{100(t-s)}{\eps^3}\right)\right)ds\\
        &=\int_\bR h'(t-\sigma_\eps(t)s)\varphi(s)ds.
\end{align*}

Now note for $|t|<\frac{\eps}{4}$ that $\sigma_\eps$ is a constant function; therefore, for all $t\in(-\eps_0,\eps_0)$
\begin{equation}\label{der}
  h'_\eps(t)=\int_\bR h'(t-\sigma_\eps(t)s)\left(1-s\eps^2\sigma'\left(\frac{t}{\eps}\right)\right)\varphi(s)ds .
\end{equation}
By (\ref{conv}) and (\ref{der}) we have
\begin{align*}
         ||h_\eps(t)-h(t)||_{u}&\leq \int_\bR ||h(t-\sigma_\eps(t)s) -h(t)||_{u} \varphi(s)ds\\
         &\lesssim \eps^3.
\end{align*}
and
\begin{align*}
    ||h'_\eps(t)-h'(t)||_u &\leq \int_\bR ||h'(t-\sigma_\eps(t)s)-h'(t)||_u \varphi(s)ds \\
    &\qquad+ \int_\bR \left|\left|h'(t-\sigma_\eps(t)s)s\eps^2\sigma'\left(\frac{t}{\eps}\right)\right|\right|_u \varphi(s)ds\\
    &\lesssim \eps^3 + \eps^2\int_\bR \left|\left|h'(t-\sigma_\eps(t)s)\sigma'\left(\frac{t}{\eps}\right)\right|\right|_u \varphi(s)ds\\
   &\lesssim \eps^2.
\end{align*}
Lastly, note that
\begin{align*}
    |h'_\eps(t)|&=\int_\bR |h'(t-\sigma_\eps(t)s)|\left|\left(1-s\eps^2\sigma'\left(\frac{t}{\eps}\right)\right)\right||\varphi(s)|ds\\
    &\leq L\int_\bR \left(1-s\eps^2\sigma'\left(\frac{t}{\eps}\right)\right)(\varphi(s))ds\\
    &=L\left(1-\eps^2\sigma'\left(\frac{t}{\eps}\right) \int_\bR s\varphi(s)ds\right)\\
    &\leq L,
\end{align*}
where the first inequality follows if $\eps$ is small enough and the last inequality follows since $s\varphi(s)$ is an odd function. Moreover, redoing this computation without the absolute values shows that $h'_\eps(t) \leq \sup\{h'(t):t\in\bR\setminus\{0\}\}$.
\end{proof}

Now we are ready to construct smooth $1$-Lipschitz maps $F_j:M^n_j\to (\bS^n,g_{rd})$.

\begin{lemma}\label{mwst2}
There exists a function $F_j:M^n_j\to \bS^n$ that is a $1$-Lipschitz diffeomorphism with $\deg F_j\neq0$
\end{lemma}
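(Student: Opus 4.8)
The plan is to produce $F_j$ as a rotationally symmetric map. Recall from the remark following \Cref{mwst1} that $M^n_j=\big([0,D_j]\times\bS^{n-1},\,dt^2+\sin^2(\rho_j(t))\,g_{\bS^{n-1}}\big)$, where $\rho_j(t)=\pi-t$ on $[0,\pi-\tfrac2j]$ (so this piece is isometric to $(\bS^n,g_{rd})$ with an open geodesic ball of radius $\tfrac2j$ removed), $\rho_j(t)=\pi-(D_j-t)$ on $[D_j-\pi+\tfrac2j,D_j]$ by symmetry, and $\sin\rho_j$ is small, of size $O(1/j)$, on the neck $[\pi-\tfrac2j,\,D_j-\pi+\tfrac2j]$. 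The one analytic input we need is $|\rho_j'|\le 1$: on the round pieces $\rho_j'=\pm1$, and on the tunnel pieces $\rho_j$ is (up to the negligible error of \Cref{princurv}) the $r$‑component of the arclength‑parametrized curve $\tilde\gamma$ from \Cref{halftunnellemma}, so $\rho_j'=-\cos\tilde\theta$. Writing $\mathfrak r(t)=\sin\rho_j(t)$, this gives $(\mathfrak r')^2+\mathfrak r^2=\cos^2\rho_j\,(\rho_j')^2+\sin^2\rho_j\le 1$, and hence $|v_j'|\le|\rho_j'|\le 1$ for $v_j:=\arcsin\mathfrak r=\min(\rho_j,\pi-\rho_j)$.

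Write $(\bS^n,g_{rd})=\big([0,\pi]\times\bS^{n-1},\,d\phi^2+\sin^2\phi\,g_{\bS^{n-1}}\big)$ and look for $F_j(t,\omega)=(\phi_j(t),\omega)$ with $\phi_j\colon[0,D_j]\to[0,\pi]$ a smooth strictly increasing bijection that equals the identity near $t=0$ and equals $t\mapsto\pi-(D_j-t)$ near $t=D_j$. Such an $F_j$ is automatically a diffeomorphism — its differential is invertible for $t\in(0,D_j)$ because $\phi_j'>0$ and $\phi_j(t)\in(0,\pi)$, and the two boundary prescriptions make $F_j$ a local isometry near each pole — so $\deg F_j=\pm1\neq 0$. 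Since $F_j^{*}g_{rd}=(\phi_j')^2\,dt^2+\sin^2(\phi_j(t))\,g_{\bS^{n-1}}$, the map $F_j$ is $1$-Lipschitz if and only if $|\phi_j'|\le1$ and $\sin\phi_j(t)\le\mathfrak r(t)$ for all $t$. Thus everything reduces to constructing $\phi_j$.

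On $[0,\pi-\tfrac2j]$ set $\phi_j(t)=t$; there $F_j$ is the identity isometry of $(\bS^n,g_{rd})$ with a ball of radius $\tfrac2j$ removed, so both Lipschitz conditions hold with equality. On $[\pi-\tfrac2j,D_j]$ we have $\phi_j\ge\pi-\tfrac2j>\tfrac\pi2$, so $\sin\phi_j\le\mathfrak r$ is equivalent to the barrier inequality $\phi_j(t)\ge\pi-v_j(t)$. Throughout the neck $\pi-v_j$ lies in $[\pi-\tfrac2j,\pi)$, so there $\phi_j$ must stay within $O(1/j)$ of $\pi$; this is achievable because $|v_j'|\le1$ and $\phi_j=\pi-v_j$ with matching slope $1$ at $t=\pi-\tfrac2j$, which lets $\phi_j$ sit just above the barrier all along the neck. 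On the second sphere $\pi-v_j$ drops to $\tfrac\pi2$ at the equator and only rises back to $\pi$ (with slope $1$) on a neighborhood of $t=D_j$, while $\phi_j$ is by now close to $\pi$ and monotone and needs to gain only $\tfrac2j$ in total over $[\pi-\tfrac2j,D_j]$, an interval of fixed length $\gg\tfrac2j$. Hence $\phi_j$ can be extended so as to increase strictly but arbitrarily slowly through the remainder of the neck and the second sphere, always remaining $\ge\pi-v_j$ and $\le\pi$, and to coincide with $t\mapsto\pi-(D_j-t)$ near $D_j$. This produces the required $\phi_j$, and $F_j=\phi_j\times\mathrm{id}_{\bS^{n-1}}$ is a $1$-Lipschitz diffeomorphism with $\deg F_j\neq 0$.

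I expect the genuinely delicate point to be exactly that last step: in the neck the Lipschitz constraint pins $\phi_j$ to within $O(1/j)$ of $\pi$, and one must still keep $\phi_j$ strictly monotone all the way to $t=D_j$. This works only because $|\rho_j'|\le 1$ (equivalently $(\mathfrak r')^2+\mathfrak r^2\le 1$) prevents the forced barrier $\pi-v_j$ from ever rising faster than the admissible slope $1$, and because the tunnel is long enough to leave room for $\phi_j$ to be strictly increasing while changing by only $O(1/j)$.
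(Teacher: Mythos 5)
Your proof is correct and takes essentially the same approach as the paper: Lemma~5.3 there builds a decreasing piecewise-linear profile $f_j$ (equal to $\pi-t$ on $[0,t_j]$ with $t_j$ chosen so $f_j(t_j)=\tfrac1{10}\rho_j(D_j/2)$, then a very shallow linear drop to $0$) and smooths it via the mollification Lemma~5.2; your $\phi_j$ is precisely $\pi-f_j$ reparametrized as an increasing function, and you verify the same two radial conditions $|\phi_j'|\le 1$ and $\sin\phi_j\le\sin\rho_j$ using the same facts about $\rho_j$ ($|\rho_j'|\le 1$ on the first sphere and tunnel, $\rho_j\ge \rho_j(D_j/2)$ on the neck, $\rho_j'=1$ on the second sphere). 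The only cosmetic differences are that you impose $\phi_j'(D_j)=1$ so $F_j$ is a local isometry at the far pole (the paper instead takes a small slope $a_j$ there, which still yields a smooth diffeomorphism), and that you organize the $\sin$ comparison as a single barrier inequality $\phi_j\ge\pi-v_j$ rather than the paper's interval-by-interval check.
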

\begin{proof}
First define a decreasing 1-Lipschitz function $f_j:[0,D_j]\to [0,\pi]$. 
\[
f_j(t)=\begin{cases}
\pi-t, & t\in [0,t_j]\\
a_j(t-t_j)+b_j, & t\in[t_j,D_j],
\end{cases}
\]
where $a_j=\frac{-\pi+t_j}{D_j-t_j}$, $b_j=\pi-t_j$, and $t_j$ is chosen so that $f_j(t_j)=\frac{1}{10}\rho\left(\frac{1}{2}D_j\right)$. Note $\rho\left(\frac{1}{2}D_j\right)$ is the radius of the cylindrical part of the tunnel which is also the minimum that $\rho_j(t)$ attains on $\left[\frac{\pi}{2},D_j-\frac{\pi}{2}\right]$.

By \Cref{convolution}, we can smooth $f_j$ to $f_{j,\eps}$ by choosing $\eps_0$ and $\eps$ small enough. And so define $F_{j,\eps}(t,\theta)=(f_{j,\eps}(t),\theta)$. Since $f'_{j,\eps}(t)<0$ and $f_{j,\eps}$ is a bijection, we have that $F_{j,\eps}$ is a diffeomorphism.
We want to show that for all $v\in TM_j$
\[
F_{j,\eps}^*g_{rd}(v,v) \leq g_j(v,v).
\]

Note that
\[
F_{j,\eps}^*g_{rd}=\left(f_{j,\eps}'(t)\right)^2dt^2+\sin^2(f_{j,\eps}(t))g_{\bS^{n-1}}.
\]
and 
\[
g_j=dt^2+\sin^2(\rho_j(t))g_{\bS^{n-1}}.
\]
First by (\ref{curve}) and \Cref{convolution} we know that $|f'_{j,\eps}(t)|\leq 1$ for all $t$. Now we will show that $\sin^2(f_{j,\eps}(t))\leq \sin^2(\rho_j(t)).$

On $\left[0,\pi-t_{j}-20\eps\right]$ we have by (\ref{curve}) that 
\[
\rho_j(t)=\pi-\int_0^t\cos\left(\theta_j (u)\right) du \geq \pi -t =f_j(t) = f_{j,\eps}(t).
\]
On $\left[\pi-t_{j}-20\eps,\pi-t_j\right]$ we have 
\[
\rho_j(t)=\pi-\int_0^t\cos\left(\theta_j (u)\right) du > \pi -t =f_j(t)
\]
and so for small enough $\eps$, we have that $f_{j,\eps}(t)$ will also satisfy this inequality.

On $\left[\pi-t_{j}, D_j-\frac{\pi}{2}\right]$, we have that $f_j(t) \leq  \frac{1}{10}\rho\left(\frac{1}{2}D_j\right)$ and that $\frac{1}{10}\rho\left(\frac{1}{2}D_j\right)< \rho_j(t) \leq \frac{\pi}{2}$. Therefore, $\sin^2(f_j(t))\leq \sin^2(\rho_j(t))$ on $\left[0, D_j-\frac{\pi}{2}\right]$.

Lastly on $\left[ D_j-\frac{\pi}{2},D_j\right]$ we have the following: $\rho_j(t)=\pi-D_j+t$ and $f_{j,\eps}(t)=f_j(t)=a_j(t-t_j)+b_j$ by the construction. Moreover,
\[
-f_j(t)+\pi \geq \rho_j(t)
\]
since if we define $\psi_j(t)=\rho_j(t)+f_j(t)-\pi$, then we see that $\psi'(t)\geq 0$ and $\psi(D_j)=0$. We also note on $\left[ D_j-\frac{\pi}{2},D_j\right]$ that $\frac{\pi}{2}\leq -f_j(t)+\pi \leq \pi$ and $\frac{\pi}{2}\leq \rho_j(t) \leq \pi$. Therefore, we conclude that $\sin^2(f_{j,\eps}(t))=\sin^2(-f_{j,\eps}(t)+\pi)\leq \sin^2(\rho_j(t))$ on $\left[ D_j-\frac{\pi}{2},D_j\right]$.

Thus, for all $v\in TM_j$ we have 
\[
F_{j,\eps}^*g_{rd}(v,v) \leq g_j(v,v),
\]
which implies 
\[
 \ell_{\bS^{n}}\left(F_{j,\eps}\circ c\right) \leq \ell_{M_j}(c)
\]
where $c:[0,1]\to (\bS^n,g_{rd})$ is a path connecting $p$ and $q$. This implies that
\[
d_{\bS^{n}}\left(F_{j,\eps}(p),F_{j,\eps}(q)\right) \leq  d_{M_j}(p,q).
\]
Thus, we have that $F_{j,\eps}$ is 1-Lipschitz. Moreover, $\deg F_{j,\eps} \neq 0$ since  $F_{j,\eps}$ is a diffeomorphism.
\end{proof}

\begin{lemma}\label{widlem}
Let $(\bS^3,g_1), (\bS^3,g_2)$ be 3-spheres such that there exists a diffeomorphism $F:(\bS^3,g_1)\to (\bS^3,g_2)$ that is 1-Lipschitz and is isotopic to the identity then 
\[
\mathrm{width}(\bS^3,g_2)\leq \mathrm{width}(\bS^3,g_1).
\]
\end{lemma}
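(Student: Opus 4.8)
The plan is to take an arbitrary admissible sweepout of $(\bS^3,g_1)$, push it forward through $F$, check that it is still an admissible sweepout (now for $(\bS^3,g_2)$), and compare areas using that $F$ is $1$-Lipschitz. The key observation is that the family $\Lambda'$ appearing in the definition of $\mathrm{width}$ is intrinsic to the smooth manifold $\bS^3$ and does not depend on the metric.

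First I would record the two elementary facts that drive the argument. \emph{(i) Area non-increase.} Since $F$ is a $1$-Lipschitz diffeomorphism, for every $p\in\bS^3$ and $v\in T_p\bS^3$ one has $|dF_p(v)|_{g_2}\le |v|_{g_1}$ (apply the Lipschitz bound to a curve through $p$ with velocity $v$ and let its parameter interval shrink), i.e.\ $F^*g_2\le g_1$ as quadratic forms on $T\bS^3$. Restricting to any embedded surface $\Sigma\subset\bS^3$ this gives $dvol_{(F^*g_2)|_\Sigma}\le dvol_{g_1|_\Sigma}$ pointwise, hence $|F(\Sigma)|_{g_2}=\int_\Sigma dvol_{(F^*g_2)|_\Sigma}\le \int_\Sigma dvol_{g_1|_\Sigma}=|\Sigma|_{g_1}$. \emph{(ii) Admissibility is preserved.} An element of $\Lambda'$ has the form $\{\Sigma_t=F_t(\Sigma'_t)\}$ with $t\mapsto F_t$ a smooth path of diffeomorphisms of $\bS^3$, each isotopic to the identity. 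Because $F$ is isotopic to the identity, and the diffeomorphisms isotopic to the identity form the identity component of $\mathrm{Diff}(\bS^3)$ — in particular a subgroup closed under composition — each $F\circ F_t$ is again isotopic to the identity, and $t\mapsto F\circ F_t$ is again smooth. Since $F(\Sigma_t)=F(F_t(\Sigma'_t))=(F\circ F_t)(\Sigma'_t)$, the family $\{F(\Sigma_t)\}$ again lies in $\Lambda'$.

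Now I would assemble the estimate. Fix any $\{\Sigma_t\}\in\Lambda'$. By (ii) the family $\{F(\Sigma_t)\}$ is a competitor in the min--max defining $\mathrm{width}(\bS^3,g_2)$, and by (i),
\[
\mathrm{width}(\bS^3,g_2)\le \sup_{t\in[-1,1]}|F(\Sigma_t)|_{g_2}\le \sup_{t\in[-1,1]}|\Sigma_t|_{g_1}.
\]
Taking the infimum of the right-hand side over all $\{\Sigma_t\}\in\Lambda'$ gives $\mathrm{width}(\bS^3,g_2)\le\mathrm{width}(\bS^3,g_1)$, as claimed.

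There is no serious obstacle here; the only points requiring a moment's care are that pushing a sweepout forward by $F$ keeps it admissible (handled by the subgroup remark, which crucially uses the hypothesis that $F$ itself is isotopic to the identity) and that ``$1$-Lipschitz'' is correctly converted into the pointwise metric inequality $F^*g_2\le g_1$ needed for the area comparison. If one prefers to avoid invoking the group structure of $\mathrm{Diff}_0(\bS^3)$, one can instead concatenate a chosen isotopy from $\mathrm{id}$ to $F_t$ with the isotopy $s\mapsto \phi_s\circ F_t$, where $\phi_s$ runs from $\mathrm{id}$ to $F$, exhibiting $F\circ F_t$ explicitly as isotopic to the identity.
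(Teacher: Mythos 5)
Your proof is correct and follows the same route as the paper: push an arbitrary admissible sweepout forward by $F$, observe it is still admissible because $F$ is isotopic to the identity, and compare areas using that $F$ is $1$-Lipschitz. You spell out the admissibility and area-nonincrease steps more explicitly (and take an infimum instead of a $\delta$-approximation), but the argument is identical in substance.
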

\begin{proof}
By the definition of $\mathrm{width}$ for any $\delta>0$ there exists $\{\Sigma_t\}$ such that \[
\sup_t |\Sigma_t|_1 < \mathrm{width}(\bS^3,g_1)+\delta;
\]
therefore,
\[
\mathrm{width}(\bS^3,g_2)\leq \sup_t |F(\Sigma_t)|_2\leq \sup_t |\Sigma_t|_1 \leq \mathrm{width}(\bS^3,g_1) +\delta
\]
where  the first inequality follows since $F(\Sigma_t)\in \Lambda'$ and the second inequality follows since $F$ is 1-Lipschitz.
\end{proof}

\begin{proof}[Proof of \Cref{minALL}]
Let $M_j$ be as in \Cref{mwst1}; therefore, $M^n_j\to M_\infty$ where $M_\infty$ is the disjoint union of two spheres. Let $F_j:M^n_j\to \bS^n$ be as in \Cref{mwst2}. Then by Arzela-Ascoli \Cref{thm6.1} there is a subsequence $F_{j_k}$ that converges to a $1$-Lipschitz map 
\[
F_\infty: M_\infty \to \bS^n.
\]
This map is not a Riemannian isometry since $\bS^n$ is connected and $N\sqcup N'$ is not.
\end{proof}

\begin{proof}[Proof of \Cref{minAMV}]
Let $M^3_j$ be as in \Cref{mwst1}; therefore, $M^3_j\to M_\infty$ in $\mathcal{VF}$-sense where $M_\infty$ is the disjoint union of two spheres. Let $F_j:M^3_j\to \bS^n$ be as in \Cref{mwst2} and define $\tilde{F}_j(r,\theta)=F_j(D_j-r,\theta)$. Consider the diffeomorphism 
\[
\Phi:[0,D_j]\times \bS^{2}\to [0,\pi]\times \bS^{2}, \qquad \Phi(r,\theta)= \left(\frac{\pi}{D_j}r,\theta\right)
\]
Note that $\Phi$ is an isometry between $([0,D_j]\times \bS^{n-1}, \Phi^*(dr^2 +\sin^2(r)g_{\bS^{n-1}}))$ and \\$([0,\pi]\times \bS^{n-1}, dr^2 +\sin^2(r)g_{\bS^{n-1}})$. And now consider 
\[
(\Phi^{-1}\circ \tilde{F}_j) (r,\theta)=\left(\frac{D_j}{\pi}f_j(D_j-r),\theta\right).
\]
This map is a 1-Lipschitz orientation preserving diffeomorphism from $M^n_j$ to the round $n$-sphere and
 $\Phi^{-1}\circ F_j$ is isotopic to the identity. Therefore, by \Cref{widlem} we have that $\mathrm{width}(M^3_j)\geq 4\pi$. 
\end{proof}

\section{Manifolds with many wells}\label{theoremsB}
In this section, we will use \Cref{propW} (\nameref{propW}) to construct sequences of manifolds with many wells. Furthermore, we will prove Theorems \ref{spikesMV} and \ref{spikesLL}. 

\begin{theorem}\label{mwmw1}
Let $(M^n,g)$ be a closed Riemannian manifold of dimension $n\geq 3$ with scalar curvature $R\geq\kappa$. Then there exists a sequence of Riemannian manifolds $M^n_j=(M^n,g_j)$ such that $R^j\geq \kappa-\frac{1}{j}$ and $M^n_j$ converge in the ${\mathrm{VADB}}$-sense and $\mathcal{VF}$-sense to $M^n$ but has no convergent subsequence in the $\mathrm{GH}$-topology.
\end{theorem}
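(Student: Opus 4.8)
The plan is to realize the standard Ilmanen-type picture using the quantitative well of \Cref{propW}. Fix the length parameter $d=1$ throughout. For each $j\in\bN$ I would choose $j$ distinct points $p_1,\dots,p_j\in M$ and then a radius $\delta_j>0$ so small that $\delta_j$ lies below the thresholds required by \Cref{propW} and \Cref{propvol}, the geodesic balls $B_g(p_i,2\delta_j)$ are pairwise disjoint, $\delta_j\leq 1$, and $\delta_j^{\,n-1}<j^{-2}$ (so that $j\,\delta_j^{\,n-1}<1/j\to0$). Then apply \Cref{propW} successively at $p_1,\dots,p_j$, each time with integer parameter $j$ and $d=1$; this is legitimate because, the balls being disjoint, at the moment the $i$-th well is attached the metric on $B_g(p_i,2\delta_j)$ is still $g$, hence has scalar curvature $\geq\kappa$ there. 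The outcome is $M^n_j=(M^n,g_j)$ (with the same underlying smooth manifold, by \Cref{propW}) with $g_j=g$ off $\bigcup_iB_g(p_i,2\delta_j)$ and, by \Cref{smoothattach}, $g_j=g+dF_i^2$ on $B_g(p_i,2\delta_j)$; in particular $g_j\geq g$ as quadratic forms at every point of $M$, and $R^j\geq\kappa-1/j$ by parts (1) and (4) of \Cref{propW}.

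Next I would read off the bounds and the two convergences. Part (3) of \Cref{propW} gives $\diam(M^n_j)\leq\diam(M,g)+2C(\delta_j+1)=:D$, and $\vol(M^n_j)=\vol(M,g)-\sum_i\vol_gB_g(p_i,2\delta_j)+\sum_i\vol W_i$, where both correction sums tend to $0$ (by \Cref{propvol} for the balls, by part (3) of \Cref{propW} for the wells, using $j\,\delta_j^{\,n-1}\to0$); hence $\vol(M^n_j)\leq V$ for some $V>0$ and $\vol(M^n_j)\to\vol(M,g)$. Taking $\Psi_j=\mathrm{id}\colon(M^n,g)\to(M^n,g_j)$, the lemma of \cite{APS} recorded in \Cref{bg} turns $g_j\geq g$ into $d_{g_j}(p,q)\geq d_g(p,q)$ for all $p,q\in M$; combined with the diameter and volume statements this is precisely the definition of $M^n_j\xrightarrow{\mathrm{VADB}}(M^n,g)$, and then \Cref{VADB} upgrades this to $M^n_j\xrightarrow{\mathcal{VF}}(M^n,g)$.

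It remains to rule out $\mathrm{GH}$-subconvergence, and this is where the real content sits. Let $x_i$ be the tip of the $i$-th well of $M^n_j$. Realizing that well as the graphical hypersurface $\widetilde W_i\subset(\bR\times B_g(p_i,2\delta_j),\,dt^2+g)$ from \Cref{smoothattach}, the orthogonal projection onto the $\bR$-factor restricts to a $1$-Lipschitz map on $\widetilde W_i$; along the length-$d$ segment $[s_{m+1},s_{m+1}+d]$ of the defining curve the angle $\theta$ stays above $\bar\theta=\sin^{-1}(12/13)$, so the $t$-coordinate increases there by at least $d\sin\bar\theta=\tfrac{12}{13}$, while the attaching sphere $\partial B_g(p_i,2\delta_j)$ lies at $t=0$. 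Hence $d_{M^n_j}(x_i,\partial B_g(p_i,2\delta_j))\geq\tfrac12$, and since each $\partial B_g(p_i,2\delta_j)$ separates its well from the rest of $M^n_j$ and the balls are disjoint, $d_{M^n_j}(x_i,x_{i'})\geq1$ whenever $i\neq i'$. The radius-$\tfrac12$ balls about $x_1,\dots,x_j$ are therefore pairwise disjoint, so $\mathrm{Cap}_j(1)\geq j\to\infty$; applying the equivalence of conditions (1) and (2) in \Cref{GH} along any subsequence shows that no subsequence of $(M^n_j)$ converges in the $\mathrm{GH}$-sense.

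The step I expect to be the main obstacle is this last one — specifically, securing a \emph{uniform} lower bound on the depth of the wells: although $\delta_j\to0$ forces the wells to become arbitrarily thin, their tips must stay at distance bounded below, \emph{independently of $j$ and $\delta_j$}, from the region of $M$ where they are glued in. This is exactly what the fixed-length, slope-bounded segment $[s_{m+1},s_{m+1}+d]$ of the curve constructed for \Cref{propW} supplies, and it is the sole mechanism making the capacities blow up; the rest — the scalar-curvature bookkeeping, the diameter and volume estimates, and the $\mathrm{VADB}$ comparison — is a routine consequence of what was already established in \Cref{propW} and \Cref{bg}.
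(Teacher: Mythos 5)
Your proof is correct and follows essentially the same route as the paper's: many pairwise disjoint wells of fixed depth and shrinking width, then $g_j\geq g$ plus the identity map gives $\mathrm{VADB}$ via the lemma from \cite{APS}, $\mathcal{VF}$ via \Cref{VADB}, and a capacity count rules out any $\mathrm{GH}$-convergent subsequence. The only place you visibly diverge is in how you justify the capacity blowup: the paper simply observes $\diam(W_{i,j})\geq d$ and asserts that the $\eps_0$-balls about the centers stay disjoint, whereas you extract from the length-$d$, slope-$\geq\bar\theta$ segment of the defining curve and the $1$-Lipschitz projection onto the $t$-factor a quantitative lower bound $d\sin\bar\theta$ on the distance from each tip to its attaching sphere, which is the correct geometric reason the balls remain disjoint (a lower diameter bound alone does not directly control the tip-to-boundary distance). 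That refinement strengthens a step the paper leaves implicit; otherwise the argument, the choice of scaling $\delta_j$ so that $j\,\delta_j^{n-1}\to0$, and the use of Propositions \ref{propW}, \ref{propvol}, Lemma~\ref{smoothattach}, and Theorem~\ref{VADB} all match the paper's proof.
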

\begin{proof}
Define
\[
X_j=\left\{(B(p_i^j,\delta_j),g)\right\}_{i=1}^j
\]
to be a collection of disjoint geodesic balls in $M^n$ where $0<\delta_j<\frac{1}{j}$ is chosen small enough so that by \Cref{propW} we replace each $B(p_i^j,\delta_j)$ with the well $W_{i,j}=(B(p_i^j,\delta_j),g_j)$ such that the scalar curvature of each of the wells satisfies $R^j>\kappa-\frac{1}{j}$. Moreover, choose $d=\frac{1}{2}$ in \Cref{propW} so that $\diam({W_{i,j}})\geq \frac{1}{2}$. Call the resulting manifold $M^n_j=(M^n,g_j)$. Now we note that 
\begin{align*}
    \lim_{j\to\infty} \vol_j{(M^n_j)} &= \lim_{j\to \infty} \vol_g{(M^n)} - \sum_{i=1}^j\vol_g{(B(p_i^j,\delta_j))} + \sum_{i=1}^j\vol_j{(W_{i,j})}.
\end{align*}
   Thus, by \Cref{propW} and \Cref{propvol}
     \[
     \lim_{j\to \infty} \vol_g{(M^n)} - jC\delta_j^n\leq \lim_{j\to\infty} \vol_j{(M^n_j)}\leq  \lim_{j\to \infty} \vol_g{(M^n)} + Cj\left(\delta_j^n+\frac{\delta_j^{n-1}}{2}\right).
     \]
and so 
\[
 \lim_{j\to\infty} \vol_j{(M^n_j)} =\vol_g{(M^n)}.
\]
Also by \Cref{propW} and the triangle inequality, we have that 
\[
\diam\left(M^n_j\right) \leq \diam \left((M^n,g)\right) + 2\diam{(W_j)} \leq \diam \left((M^n,g)\right) + 2\left(C+\frac{1}{2}\right).
\]
so the diameters are uniformly bounded.

Consider the identity map $id:(M^n,g_j)\to (M^n,g)$. Denote $id^*g_j=g_j$. Now by construction and \Cref{smoothattach} we have for any $p\in W_{i,j}$ that 
\[
g(v,v) \leq  g_j(v,v) \text{ for all } v\in T_pM 
\]
because $g_j=dF^2_j+g$ and if $p\notin W_{i,j}$ then $g(v,v) =  g_j(v,v) \text{ for all } v\in T_pM$.

Therefore, $M^n_j$ converges to $(M^n,g)$ in the $\mathrm{VADB}$-sense and by \Cref{VADB} we have that $M^n_j$ converges to $(M^n,g)$ in the $\mathcal{VF}$-sense. 

Fix $\eps_0<\frac{1}{4}$. Note that $\eps_0<d$ and so  $B(p^j_i,\eps_0)\subset M^n_j$ are disjoint. Therefore, 
\[
j< \text{Cov}_j(\eps_0)
\]
and so as $j\to \infty$ we have $\text{Cov}_j(\eps_0) \to \infty$ so by \Cref{GH} that $M_j$ does not converge in the $\mathrm{GH}$-sense.

\end{proof}

\begin{proof}[Proof of \Cref{spikesLL}]
Consider the round $n$-sphere $(\bS^n,g_{rd})$. By \Cref{mwmw1} we see that there exists a sequence $M_j=(\bS^n,g_j)$ with scalar curvature $R^j\geq n(n-1)-\frac{1}{j}$ such that $M_j\to (\bS^3,g_{rd})$ in the $VADB$ and $\mathcal{VF}$-sense but has no convergent subsequence in the $\mathrm{GH}$-topology. Furthermore, the identity map $id:(\bS^n,g_j)\to (\bS^n,g_{rd})$ is smooth 1-Lipschitz diffeomorphism.
\end{proof}

\begin{proof}[Proof of \Cref{spikesMV}]
Consider the round 3-sphere $(\bS^3,g_{rd})$. By \Cref{mwmw1} we see that there exists a sequence $(\bS^3,g_j)$ with scalar curvature $R^j\geq 6-\frac{1}{j}$ such that $(\bS^3,g_j)\to (\bS^3,g_{rd})$ in the $VADB$ and $\mathcal{VF}$-sense but has no convergent subsequence in the $\mathrm{GH}$-topology. Moreover, the identity map $id:(\bS^3,g_j)\to (\bS^3,g_{rd})$ is 1-Lipschitz and by \Cref{widlem} we have that $\mathrm{width}(\bS^3,g_j)\geq 4\pi$.
\end{proof}

\begin{proof}[Proof of \Cref{noncompact}]
Let $\kappa>0$ and let $(M^n,g)$ be the round sphere of curvature $\frac{2\kappa}{n(n-1)}$. Let $\{p_j\}_{j=1}^\infty\subset M^n$ be a sequence of points on a geodesic converging to a point $p_\infty$. Define
\[
\left\{B(p_j,\delta_j)\right\}_{j=1}^\infty
\]
to be a collection of disjoint geodesic balls in $M^n$ where $0<\delta_j<\frac{1}{2^j}$ is chosen small enough so that by \Cref{propW} there exists a well $W_{j}=(B(p_j,\delta_j),g_j)$ such that the scalar curvature of each of the wells satisfies $R^j>2\kappa\left(1-\frac{1}{10j}\right)>\kappa$. Let $\{d_j\}_{j=1}^\infty \subset [2,10]$ be a strictly increasing sequence of positive numbers, and choose $d=d_j$ in \Cref{propW} so that $\diam({W_{j}})\geq d_j$. Now define $M^n_i$ to be the Riemannian obtained by replacing the first $i$ balls with the corresponding first $i$ wells, i.e.,
\[
M^n_i= \left(M^n\setminus \bigcup_{j=1}^i B(p_j,\delta_j) \right) \sqcup \bigcup_{j=1}^i W_j.
\]
We note that $M^n_i$ has scalar curvature strictly larger than $\kappa$. We also have by \Cref{propW} that
\[
\diam(M^n_i) \leq 25C
\]
and
\begin{align*}
\vol(M^n_i) &\leq \vol(M^n) + \sum_{j=1}^\infty \vol (W_j)\\ 
&\leq \vol(M^n) + C\left(\sum_{j=1}^\infty \frac{1}{2^{nj}} + 10\sum_{j=1}^\infty \frac{1}{2^{(n-1)j}}\right)\\
&\leq \vol(M^n) + 11C.
\end{align*}
Now we will define $M_\infty$ to be
\[
M_\infty = \left(M^n\setminus \bigcup_{j=1}^\infty B(p_j,\delta_j) \right) \sqcup \bigcup_{j=1}^\infty W_j
\]
with its induced length metric and natural current structure $T_\infty$. Therefore, we have that $\vol(M^n_i) \to \vol(M_\infty).$ Let $E_j\subset W_j$ be a ball centered at $p_j$ of radius 1 and so $M_\infty$ is noncompact since it contains infinitely many disjoint balls of radius 1.

We will show that $M^n_i$ converges to $M_\infty$ in an analogous many to  \cite[Example A.11]{SW}. Let $\eps_i=d_{M^n}(p_i,p_\infty)$ and note that if $\tilde{B}_i =B(p_\infty, \eps_i-\delta_i)$, then there is an isometry, $\varphi:V_i \to V'_i$ where $U_i=M_i^n\setminus \tilde{B}_i\subset M_i$ and $U'_i\subset M_\infty$.  By \cite[Lemma A.2]{SW}, there exists a metric space $Z$ such that 
\begin{align*}
d_F^Z(M^n_i,M_\infty) &\leq \vol(M^n_i\setminus U_i) +\vol (M_\infty \setminus U'_i) \\
&\quad\quad + \vol({U}_i)\left(\sqrt{2\diam_{M^n_i}(\partial U_i) \diam_{M^n_i}( U_i)} + \diam_{M^n_i}(\partial U_i) \right) \\
&\quad\quad + \vol(U'_i)\left(\sqrt{2\diam_{M^n_i}(\partial U'_i) \diam_{M^n_i}( U'_i)} + \diam_{M^n_i}(\partial U'_i) \right).
\end{align*}
We note that
\[
\vol(M^n_i\setminus U_i)\leq \pi(\eps_i-\delta_i)^n, \quad \quad \vol (M_\infty \setminus U'_i) \leq C\left(\sum_{j=i}^\infty \frac{1}{2^{nj}} + 10\sum_{j=i}^\infty \frac{1}{2^{(n-1)j}}\right).
\]
Also, $\diam(\partial U_i)$ and $\diam(\partial U'_i)$ converge to zero. Therefore, the right-hand side of the inequality above goes to zero as $i\to\infty$. We conclude then that $M^n_i$ converges to $M_\infty$ in the $\mathcal{VF}$-sense.
\end{proof}

\section{Sewing Manifolds}\label{sewing}
We are able to generalize the sewing examples of Basilio, Dodziuk, and Sormani found in \cite{BDS} and \cite{BS}. There are two methods of sewing developed in \cite{BS}. Method I generalizes the curve sewing construction of \cite{BDS}. Here we will extend the construction using \Cref{propT} (\nameref{propT}). We start with Method I which says that given a fixed manifold one can tightly sew a compact region to a point.
\begin{prop}\label{sewprop1}
Let $(M^n,g)$ be a complete Riemannian manifold, and $A_0\subset M$ a compact subset with an even number of points $p_i\in A_0$, $i=1,\ldots,n$ with pairwise disjoint balls $B(p_i,2\delta)$ with scalar curvature greater than $\kappa$. For small enough $\delta>0$, define $A_\delta:=T_\delta(A_0)$ and 
\[
A'_\delta =A_\delta\setminus \left(\bigcup_{i=1}^nB(p_i,\delta)\right) \sqcup \bigcup_{i=1}^\frac{n}{2} T_i
\]
where $T_i$ are tunnels as in \Cref{propT} (\nameref{propT}) connecting $\partial B(p_{2j+1},\delta)$ and $\partial B(p_{2j+2},\delta)$ for $j=0,1,\ldots,\frac{n}{2}-1$. Then given any $\eps$, shrinking $\delta$ further, if necessary, we may create a new complete Riemannian manifold, $(N^n,h)$, 
\[
N^n=(M^n\setminus A_\delta)\sqcup A'_\delta
\]
satisfying
\[
\vol{(A_\delta)} -\eps \leq \vol{(A'_\delta)} \leq \vol{(A_\delta)} +\eps
\]
and 
\[
\vol{(M)} -\eps \leq \vol{(N)} \leq \vol{(M)} +\eps
\]
If, in addition, $M$ has scalar curvature, $R^M\geq \kappa$, then $N$ has scalar curvature, $R^N\geq \kappa-\eps$. If $\partial M\neq \emptyset$, the balls avoid the boundary and $\partial M$ is isometric to $\partial N$.
\end{prop}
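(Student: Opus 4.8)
The plan is to produce $(N^n,h)$ by applying \Cref{propT} (\nameref{propT}) once to each of the $\tfrac n2$ pairs of balls, simultaneously. First I would fix $j\in\bN$ with $\tfrac1j<\eps$. Since the balls $B(p_i,2\delta)$ are pairwise disjoint and each carries scalar curvature greater than $\kappa$, for $\delta$ small enough \Cref{propT} applies to every pair: taking its radius parameter so that the excised balls are the $B(p_i,\delta)$, it lets me delete $B(p_{2k-1},\delta)\cup B(p_{2k},\delta)$ and glue in a tunnel $T_k\cong\bS^{n-1}\times[0,1]$ joining $\partial B(p_{2k-1},\delta)$ to $\partial B(p_{2k},\delta)$, with $R^{T_k}>\kappa-\tfrac1j$, with $T_k$ attached smoothly, and with the new metric equal to $g$ on a collar annulus inside each $B(p_i,\delta)$ (property (2) of \Cref{propT}). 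Because the $B(p_i,\delta)$ are pairwise disjoint and — for $\delta$ small — lie in the interior of $A_\delta$ at a definite distance from $\partial A_\delta$ (and, when $\partial M\neq\emptyset$, from $\partial M$), these $\tfrac n2$ surgeries can be carried out at once; the resulting metric $h$ coincides with $g$ near $\partial A_\delta$ and near $\partial M$, so $N^n=(M^n\setminus A_\delta)\sqcup A'_\delta$ is a smooth Riemannian manifold, $\partial N$ is isometric to $\partial M$, and $N$ is complete, being a modification of the complete manifold $M$ supported in the relatively compact region $A_\delta$.

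For the volume estimates I would observe that $A'_\delta$ is $A_\delta$ with $\bigcup_{i=1}^{n}B(p_i,\delta)$ removed and $\bigcup_{k=1}^{n/2}T_k$ inserted, while $h=g$ off these balls; hence
\[
\bigl|\vol(A'_\delta)-\vol(A_\delta)\bigr|\;\le\;\sum_{i=1}^{n}\vol_g\!\bigl(B(p_i,\delta)\bigr)+\sum_{k=1}^{n/2}\vol(T_k).
\]
By \Cref{propvol} the first sum is $\le nC\delta^n$, and by property (3) of \Cref{propT} the second is $\le\tfrac n2 C(\delta^n+d\,\delta^{n-1})$; with $n$, $C$, $d$ all fixed this tends to $0$ as $\delta\to0$, so shrinking $\delta$ further forces it below $\eps$. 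The identical estimate bounds $\bigl|\vol(N)-\vol(M)\bigr|$, since $N$ and $M$ differ precisely by the replacement of $A_\delta$ with $A'_\delta$.

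For the curvature claim I would use that property (1) of \Cref{propT} gives $R^h>\kappa-\tfrac1j$ on each inserted tunnel $T_k$, whereas $N$ is isometric to $M$ on the complement of $\bigcup_k T_k$; thus if $R^M\ge\kappa$ on all of $M$ then $R^N>\kappa-\tfrac1j\ge\kappa-\eps$. If $R^M$ is only assumed bounded below near the balls, this local statement on the surgered region is all that \Cref{propT} yields, which is exactly what the proposition asserts.

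I do not expect a genuine obstacle here: the hard analysis is already packaged in \Cref{propT}. The only points needing care are the collar compatibility $h=g$ on an annulus inside each $B(p_i,\delta)$ — property (2) of \Cref{propT}, which is what makes the surgery smooth and the identification $N=(M^n\setminus A_\delta)\sqcup A'_\delta$ legitimate — together with the elementary check that for small $\delta$ every $B(p_i,2\delta)$ both carries the curvature hypothesis and sits well inside $A_\delta$, disjoint from collars of $\partial A_\delta$ and of $\partial M$. Granted \Cref{propT} and the volume comparison \Cref{propvol}, everything else is bookkeeping.
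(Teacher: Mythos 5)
Your proposal is correct and takes essentially the same approach as the paper. The paper's own proof is a single citation --- ``follows from \cite[Proposition 3.1]{BDS} while using \Cref{propT} and \Cref{propvol}'' --- and your argument simply unpacks exactly those ingredients: apply \Cref{propT} once per pair of balls (the collar compatibility in its property (2) makes the $\tfrac n2$ simultaneous surgeries glue smoothly and leave a collar of $\partial A_\delta$ and of $\partial M$ untouched), bound $|\vol(A'_\delta)-\vol(A_\delta)|$ and $|\vol(N)-\vol(M)|$ with \Cref{propvol} and propT property (3), and get the curvature lower bound from propT property (1) together with $h=g$ off the tunnels. Two small bookkeeping points worth making explicit: the tunnel-length parameter $d$ in \Cref{propT} must be held fixed (or simply taken $d=0$) so the $\tfrac n2\,C(\delta^n+d\,\delta^{n-1})$ term actually vanishes as $\delta\to 0$; and you should be careful that propT excises radius-$2\delta$ balls while the statement here excises radius-$\delta$ balls, so one applies propT with its radius parameter set to $\delta/2$, using that the curvature hypothesis on $B(p_i,2\delta)$ is more than enough. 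Neither affects the substance of the argument.
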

\begin{proof}
The proof follows from the proof of \cite[ Proposition 3.1]{BDS} while using \Cref{propT} (\nameref{propT}) and \Cref{propvol}.
\end{proof}
\begin{prop}\label{sewprop2}
Let $(M^n,g)$ be a complete Riemannian manifold and $A_0\subset M$. Let $A_a=T_a(A_0)$ be a tubular neighborhood of $A_0$. Assume that there is an $a>0$ such that $A_a$ has scalar curvature greater than $\kappa$. Let $r\in(0,a)$. Given $\eps>0$, there exists $\delta=\delta(A_0,\kappa,r,\eps)\in(0,r)$ and there exists even $n=\bar{n}(\bar{n}-1)$ depending on $A_0, \kappa, \eps$, and $r$ and points $p_1,\ldots,p_n \in A_0$  with $B(p_i,\delta)$ pairwise disjoint such that we can ``sew the region tightly" to create a new complete Riemannian manifold $(N^n,h)$,
\[
N=(M\setminus A_r)\sqcup A'_r,
\]
as in \Cref{sewprop1}, with
\[
A'_\delta = A_\delta \setminus \left(\bigcup_{i=1}^{2n}B(p_i,\delta)\right) \sqcup \bigcup_{j=0}^{n-1} T_{2j+1}.
\]
Moreover,
\[
\vol{(A'_r)} \leq \vol{(A_r)} +\eps
\]
and 
\[
\vol{(N)} \leq \vol{(M)} +\eps
\]
and there is a constant $c>0$ such that
\[
\diam{(A'_r)} \leq cr.
\]
we say we have sewn the region $A_0$ arbitrarily tight. If $M$ has scalar curvature $R^M \geq \kappa$, then $N$ has scalar curvature $R^N\geq \kappa-\eps$. If $\partial M\neq \emptyset$, the balls avoid the boundary, and $\partial M$ is isometric to $\partial N$.
\end{prop}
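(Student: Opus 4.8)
The plan is to follow the proof of \Cref{sewprop1} essentially verbatim for the volume and scalar-curvature assertions---using \Cref{propT} (\nameref{propT}) and \Cref{propvol} as the only inputs---and to supply separately the single new ingredient, namely a choice of the points $p_i$ that forces $\diam(A'_r)\le cr$. As in \Cref{sewprop1} we assume $\overline{A_0}$ compact, so that $A_a=T_a(A_0)$ has compact closure inside the region where $R^M>\kappa$.

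First I would fix the combinatorics. Choose a maximal $\tfrac r4$-separated subset $q_1,\dots,q_{\bar n}$ of $(A_0,d_{A_0})$; by compactness this is a finite $\tfrac r4$-net of $A_0$, with $\bar n=\bar n(A_0,r)$. Around each $q_k$ pick $\bar n-1$ auxiliary points $p_{k,1},\dots,p_{k,\bar n-1}$ in the geodesic ball $B(q_k,\rho)$, where $\rho=\rho(\bar n,\delta)>0$ is taken small enough that all $\bar n(\bar n-1)$ closed balls $B(p_{k,l},2\delta)$ are pairwise disjoint and contained in $A_a$ (hence carry $R^M>\kappa$); note $\rho\to 0$ as $\delta\to 0$ for fixed $\bar n$, and $\delta\in(0,r)$. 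Since the complete graph $K_{\bar n}$ is $(\bar n-1)$-regular, one may assign to each edge $\{k,k'\}$ of $K_{\bar n}$ one ball from the cluster of $q_k$ and one from the cluster of $q_{k'}$, exhausting every ball exactly once; this produces $\tfrac12\bar n(\bar n-1)$ pairs of balls to be joined (double each pair to match the bookkeeping $n=\bar n(\bar n-1)$ of the statement). Fix also $d\le r$.

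Next I would apply \Cref{propT} (\nameref{propT}) to each designated pair of balls. These balls are pairwise disjoint and all carry $R^M>\kappa$ in the \emph{original} metric, so the surgeries do not interact and may be performed simultaneously: each replaces its two balls by a tunnel $T$ with $\diam(T)<C(\delta+d)$, $\vol(T)<C(\delta^n+d\delta^{n-1})$, and scalar curvature $>\kappa-\tfrac1j$. Choosing the same large $j$ for all of them, the resulting $(N,h)$ satisfies $R^N>\kappa-\eps$ wherever $R^M\ge\kappa$, with no accumulation of error since off each surgery the metric is unchanged. By \Cref{propvol} each removed ball has volume $\le C\delta^n$, and there are only $O(\bar n^2)$ removed balls and added tunnels with $\bar n$ already fixed; hence shrinking $\delta$ forces $|\vol(A'_r)-\vol(A_r)|\le\eps$ and $|\vol(N)-\vol(M)|\le\eps$, exactly as in \cite[Proposition 3.1]{BDS}. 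If $\partial M\neq\emptyset$ we keep every ball in the interior, so $\partial N=\partial M$ isometrically.

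Finally, the diameter bound, which is the crux. Let $x,y\in A'_r$. Denote by $C_k$ the part of $N$ replacing $B(q_k,2\rho)$; since its removed balls have radius $\delta<\rho$ and lie in $B(q_k,\rho)$, and it carries the $\bar n-1$ mouths of the tunnels incident to $q_k$, its intrinsic diameter is $\lesssim\rho+C(\delta+d)$, and for $k\neq k'$ the region $C_k$ is joined to $C_{k'}$ by a single tunnel of length $<C(\delta+d)$; thus $\bigcup_kC_k$ has $N$-diameter $\lesssim\rho+C(\delta+d)$. Moreover any $x\in A'_r$ lies within $\lesssim r+\rho+C(\delta+d)$ of $\bigcup_kC_k$: if $x$ sits in the unchanged part $A_r\setminus(\text{balls})$, a minimizing curve from $x$ to $A_0$ in $A_r$ either avoids the removed balls (surviving in $N$ with length $\le r$ and landing within $\tfrac r4$ of some $q_k$, hence within $\rho$ of $C_k$) or meets one of them (so $x$ is already within $r+\rho$ of the corresponding $C_k$); and if $x$ lies in a tunnel it is within $C(\delta+d)$ of an endpoint $C_k$. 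Combining these estimates, $d_N(x,y)\lesssim r+\rho+C(\delta+d)$ for all $x,y\in A'_r$, and taking $d\le r$ and then $\delta$ (hence $\rho$) small relative to $r$ yields $\diam(A'_r)\le c\,r$, where $c$ depends only on the constant $C$ of \Cref{propT} and \Cref{propvol}, i.e.\ on the ambient geometry near $A_0$; in particular $c$ is independent of $\eps$ and of $\bar n$. The only point beyond \cite[Proposition 3.1]{BDS} is precisely this: the complete-graph pattern guarantees that every pair of points is connected through exactly one tunnel, so the number of tunnels never enters the length estimate. The remaining clauses (``sewn arbitrarily tight'', $R^N\ge\kappa-\eps$ when $R^M\ge\kappa$, and the boundary isometry) then follow at once.
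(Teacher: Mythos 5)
Your argument is correct and is essentially the one the paper relies on: the paper's proof of this proposition is a one-line deferral to the sewing construction of Basilio--Sormani \cite[Proposition~3.6]{BS}, invoking \Cref{propT}, \Cref{sewprop1}, and \Cref{princurv}, and your write-up reconstructs precisely that argument in detail (an $r/4$-net of $A_0$, clusters of $\bar n-1$ auxiliary balls at each net point, tunnels wired according to the complete graph $K_{\bar n}$ so that every pair of net points is joined by exactly one tunnel, then $\delta\to 0$ for the volume and scalar-curvature error and the single-tunnel connectivity for the $\diam(A'_r)\le cr$ bound). The only thing you have not explicitly named is \Cref{princurv}, but it enters implicitly through the tunnel construction in \Cref{propT}, so this is not a gap.
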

\begin{proof}
The proof follows from the proof of \cite[Proposition 3.6]{BS} while using Propositions \ref{propT}, \ref{sewprop1}, and \Cref{princurv}.
\end{proof}
These statements allow us to construct sequences of manifolds with scalar curvature greater than $\kappa$ which converge to a pulled metric space in a similar manner as in \cite{BS}. We recall the following definition from \cite{BS}.
\begin{deff}\label{sewdef}
Let $(M^n,g)$ be a Riemannian manifold with a compact set $A_0\subset M$ with tubular neighborhood $A_a=T_a(A_0)$ satisfying the hypotheses of \Cref{sewprop2}. We can construct its \emph{sequence of increasingly tightly sewn manifolds}, $(N^n_j,g_j)$, by applying \Cref{sewprop2} taking $\eps=\eps_j\to0$, $n=n_j\to\infty$, and $\delta=\delta_j\to 0$ to create each sewn manifold $N^n=N^n_j$ and the edited regions $A'_\delta=A'_{\delta_j}$ which we simply denote $A'_j$. Since these sequences $N_j$ are created using \Cref{sewprop2}, they have scalar curvature greater than $\kappa-\eps_j$ when $M$ has scalar curvature greater than $\kappa$ and $\partial N_j=\partial M$ whenever $\partial M\neq \emptyset$. 
\end{deff}
\begin{theorem}\label{sewthm}
The sequence $N_j$, as in \Cref{sewdef} assuming $M^n$ is compact and $A_0$ is a compact embedded submanifold of dimension 1 to $n$, converges in the Gromov-Hausdorff sense and the intrinsic flat sense to $N_\infty$, which is a metric space created by pulling the region $A_0$ to a point. If, in addition, $\mathcal{H}^{n-1}(A_0)=0$ then $N_j$ also converges in the metric measure sense to $N_\infty.$
\end{theorem}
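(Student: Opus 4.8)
The plan is to run the convergence argument of Basilio--Dodziuk--Sormani \cite{BDS} and Basilio--Sormani \cite{BS} along the same lines, the only change being that the tunnels glued into the sewn regions are now supplied by \Cref{propT} (\nameref{propT}) together with the volume bounds of \Cref{propvol}. Note that this theorem asserts \emph{only} convergence: the scalar-curvature control built into \Cref{propT} is what feeds \Cref{sewingthm}, not the present statement. Hence all that really needs checking is that the sewn regions $A'_j=A'_{r_j}$ --- produced by \Cref{sewprop2} with parameters $\eps_j\to0$, $\delta_j\to0$, $n_j\to\infty$ and tube radii $r_j\to0$ --- have the geometric features the \cite{BDS}/\cite{BS} proofs exploit: $N_j\setminus A'_j$ is isometric to $M\setminus A_{r_j}(A_0)$, $\diam(A'_j)\le c\,r_j\to0$, and $\vol(A'_j)\le\vol(A_{r_j}(A_0))+\eps_j$; all three follow from \Cref{sewprop1} and \Cref{sewprop2} (and \Cref{propvol}). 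As the model for the limit I would take $N_\infty=(M,d_\infty)$ with $A_0$ collapsed to a point $p_*$ and $d_\infty(\pi x,\pi y)=\min\{d_g(x,y),\,d_g(x,A_0)+d_g(y,A_0)\}$, where $\pi\colon M\to N_\infty$ is the $1$-Lipschitz collapsing map and $T_\infty=\pi_\#\int_M$ is the integral current defining it.

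The technical heart is a single distance comparison: for $x,y$ at distance $\ge r_j$ from $A_0$ --- so that they sit unchanged in $N_j$ and, via $\pi$, in $N_\infty$ --- one has $|d_{N_j}(x,y)-d_{N_\infty}(\pi x,\pi y)|\le C r_j$, because a shortest $N_\infty$-path either avoids a neighborhood of $A_0$ (and is then an admissible $N_j$-path) or runs to $A_0$, jumps across, and returns, which is matched in $N_j$ by running to $\partial A_{r_j}(A_0)$, crossing $A'_j$, and returning, introducing only errors controlled by $r_j$ and $\diam(A'_j)$ (and symmetrically in the other direction). Granting this, GH convergence is immediate from the correspondence that pairs $x\in M\setminus A_{r_j}(A_0)$ with $\pi x$ and pairs every point of $A'_j$ with every point of $\pi(\overline{A_{r_j}(A_0)})$ (a set of diameter $\le r_j$): its distortion is $O(r_j)$, so $d_{GH}(N_j,N_\infty)\to0$. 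For $\mathcal F$-convergence I would feed the same data --- metric tensors agreeing off a set of diameter $O(r_j)$, distances there distorted by $O(r_j)$, the discarded pieces $A'_j$ and $\pi(\overline{A_{r_j}(A_0)})$ of diameter $O(r_j)$ and, when $\dim A_0\le n-1$, of volume $\to0$ --- into the Lakzian--Sormani estimate \cite{LaS} in the form valid when the target is the metric completion of a collapsed subset (as in \cite{BDS}, \cite{BS}; cf.\ \Cref{thm4.6}), obtaining $d_{\mathcal F}(N_j,N_\infty)\to0$. Finally, if $\mathcal H^{n-1}(A_0)=0$ then $\vol_n(A_r(A_0))\to0$ and $\vol_{n-1}(\partial A_r(A_0))\to0$ as $r\to0$, so no volume concentrates near $p_*$ and the pushforward volume measures of $N_j$ --- which already coincide with that of $N_\infty$ off a shrinking neighborhood of $p_*$ --- converge weakly in the space $Z$ realizing the GH convergence, giving $\mathrm{mGH}$-convergence.

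The step I expect to be the genuine obstacle is the $\mathcal F$-convergence, for two reasons. First, $N_\infty$ is not a Riemannian manifold, so \Cref{thm4.6} does not apply verbatim; one needs the more delicate variant of the Lakzian--Sormani estimate comparing a manifold to the completion of a collapsed region, which is precisely the technical core of \cite{BDS} and \cite{BS} and which I would reproduce rather than reinvent. Second, when $\dim A_0=n$ the region $A'_j$ has $\diam(A'_j)\to0$ but $\vol(A'_j)\not\to0$: its mass ``disappears into the point'' in the limit, so the convergence is $\mathcal F$ but not $\mathcal{VF}$ there, and before applying the estimate one must first fill the two small-diameter pieces $A'_j$ and $\pi(\overline{A_{r_j}(A_0)})$ by cones of mass $O\!\big(r_j(\vol(A'_j)+\vol(A_{r_j}(A_0)))\big)\to0$ and then compare the remainders. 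Everything else --- the GH correspondence, the distance comparison, and the measure bookkeeping --- is routine once the properties of the tunnels from \Cref{propT} and the local volume comparison \Cref{propvol} are in hand.
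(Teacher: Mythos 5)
Your proposal is correct and takes essentially the same approach as the paper, whose proof is simply a pointer to \cite[Theorem 3.8]{BS} run with the sewn regions now produced by \Cref{sewprop2} (hence the tunnels of \Cref{propT}); you are accurately unpacking that citation, correctly isolating the geometric inputs (isometry off $A'_j$, $\diam(A'_j)\lesssim r_j\to 0$, the volume bounds), the GH correspondence, the Lakzian--Sormani-type estimate for the collapsed target, and the $\mathcal H^{n-1}(A_0)=0$ hypothesis for the metric-measure conclusion. The observations you flag as obstacles -- that \Cref{thm4.6} must be replaced by its collapsed-target variant from \cite{BDS,BS}, and that when $\dim A_0=n$ one gets $\mathcal F$ but not $\mathcal{VF}$ and must first fill the small-diameter pieces by low-mass cones -- are exactly the points handled in the cited argument.
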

\begin{proof}
The proof follows from the proof of \cite[Theorem 3.8]{BS} while using \Cref{sewprop2}.
\end{proof}
Now we can prove \Cref{sewingthm}.
\begin{proof}[Proof of \Cref{sewingthm}]
Let $S$ be a simply connected space form of dimension $n$ and constant curvature $\frac{\kappa}{n(n-1)}$ and $\Sigma^m$ be a constant curvature $m$-dimensional sphere, $1\leq m \leq n-1$. We note that there exists an embedding of $\Sigma^m$ into $S$. Let $(N^n_j,g_j)$ be a sequence of manifolds constructed from $S$ sewn along an embedded $\Sigma^m$ with $\delta=\delta_j\to 0$ as in \Cref{sewprop2} and the scalar curvature $R^j\geq \kappa -\frac{1}{j}$. Then by \Cref{sewthm} we have 
\[
N_j\xrightarrow{mGH} N_\infty \text{ and } N_j  \xrightarrow{\mathcal{F}} N_\infty 
\]
where $N_\infty$ is the metric space created by taking $S$ and pulling a $\Sigma^m$ to a point. Moreover, at the pulled point $p_0\in N_\infty$ we have 
\[
wR(p_0)=\lim_{r\to 0} 6(n+2) \frac{\vol_{\bE^n}{B(0,r)}-\mathcal{H}^n(B(p_0,r))}{r^2\cdot\vol_{\bE^n}{B(0,r)} }=-\infty.
\]
We can see this because
\[
\vol_{N_\infty}{(B(p_0,r))} = \mathcal{H}^n_{N_\infty}(B(p_0,r)) = \mathcal{H}^n_{N_\infty}(B(p_0,r)\setminus \{p_0\}) =\mathcal{H}^n_{\bS_\kappa^n}(T_r(\bS^m)).
\]
Moreover, there is a constant $C(n,m,\kappa)$ such that
\[
\lim_{r\to0 } \frac{\mathcal{H}^n_{\bS_\kappa^n}(T_r(\Sigma^m))}{Cr^{n-m}} =1.
\]
We conclude that
\[
wR(p_0)= \lim_{r\to 0} 6(n+2) \frac{\omega_nr^n-Cr^{n-m}}{\omega_nr^{n+2} } =-\infty.
\]
\end{proof} 
Moreover, using \Cref{propT} we are to extend Method II for sewing manifolds in \cite{BS} to the setting where scalar curvature is bounded below. In Method II, given a sequence of Riemannian  manifolds whose limit is a Riemannian, then one can create a new sequence where the sewing occurs along the sequence.
\begin{theorem}
Let $M^n_j$ be a sequence of compact Riemannian manifolds each with a compact region $A_{j,0}\subset M^3_j$ with tubular neighborhood, $A_j$, with scalar curvature greater than $\kappa$ satisfying the hypotheses of \Cref{sewprop2}. We assume $M^n_j$ converge in the biLipschitz sense to $M^n_\infty$ and the regions $A_{j,0}$ converge to a compact set $A_{\infty,0}\subset M_\infty^n$ in the sense that there exists biLipschitz maps
\[
\psi_j:M^n_j\to M^n_\infty
\]
such that
\[
L_j=\log\left(\emph{Lip}(\psi_j)\right)+\log\left(\emph{Lip}\left(\psi^{-1}_j\right)\right)\to 0
\]
and $\psi_j(A_{j,0})=A_{\infty,0}$.
Then there exists $\delta_j\to 0$ and applying \Cref{sewprop2} to $M^n=M^n_j$ to sew the regions $A_0=A_{j,0}$ with $\delta=\delta_j$, to obtain sewn manifolds $N^n=N^n_j$, we obtain a sequence $N^n_j$ such that 
\[
N_j^n\xrightarrow{GH} N_\infty \text{ and } N_j^n\xrightarrow{\mathcal{F}} N_{\infty,0},
\]
where $\bar{N}_{\infty,0}=N_\infty$ and $N_\infty$ is the metric space created by taking $M^n_\infty$ and pulling the region $A_{\infty,0}$ to a point.

Moreover, if the regions $A_{j,0}$ satisfy $\mathcal{H}^n(A_{j,0})=0$, the the sequence $N_j^n$ also converges in the metric measure sense
\[
N_j^n\xrightarrow{mGH} N_\infty.
\]
\end{theorem}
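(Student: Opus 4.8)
The plan is to imitate the proof of the Method~II sewing theorem of \cite{BS}, substituting \Cref{propT} (\nameref{propT}) for the positive scalar curvature tunnel construction used there. The essential observation is that \Cref{propT} is a drop-in replacement: it yields tunnels with the same $O(\delta+d)$ diameter bound and $O(\delta^n+d\delta^{n-1})$ volume bound, but it additionally keeps the scalar curvature of the new manifold above $\kappa-\eps$ rather than merely positive. Consequently \Cref{sewprop1} and \Cref{sewprop2}, which are already phrased in terms of \Cref{propT}, apply verbatim to each $M^n_j$, and in particular each sewn $N^n_j$ automatically inherits scalar curvature $\ge \kappa-\eps_j$ on its sewn region.

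First I would set up the construction. For each $j$ fix $a_j>0$ so that the tube $A_j=T_{a_j}(A_{j,0})$ has scalar curvature $>\kappa$; choose a radius $r_j\in(0,a_j)$ with $r_j\to 0$ and an error sequence $\eps_j\to 0$. Applying \Cref{sewprop2} to $(M^n_j,g_j)$ with region $A_{j,0}$, radius $r_j$, and error $\eps_j$ produces a sewing scale $\delta_j=\delta(A_{j,0},\kappa,r_j,\eps_j)\in(0,r_j)$, an even number $2n_j$ of points $p_1^j,\dots,p_{2n_j}^j\in A_{j,0}$ with pairwise disjoint balls $B(p_i^j,\delta_j)$, and a complete Riemannian manifold $N^n_j=(M^n_j\setminus A_{j,r_j})\sqcup A'_j$, where $A'_j$ is obtained by removing the $2n_j$ balls from $A_{j,r_j}$ and gluing in the $n_j$ tunnels. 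By \Cref{sewprop2} we have $\diam(A'_j)\le c\,r_j$, $\vol(A'_j)\le \vol(A_{j,r_j})+\eps_j$, $\vol(N_j)\le \vol(M_j)+\eps_j$, and $N_j$ is isometric to $M_j$ off $A_{j,r_j}$.

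Next I would identify the limit and prove convergence. Let $N_\infty$ be the metric space obtained from $M^n_\infty$ by collapsing $A_{\infty,0}$ to a single point $p_0$, and let $N_{\infty,0}$ be the associated integral current space, whose completion is $N_\infty$. Since $\psi_j(A_{j,0})=A_{\infty,0}$ and $\psi_j$ is $e^{L_j}$-biLipschitz with $L_j\to 0$, the restriction of $\psi_j$ carries $M_j\setminus A_{j,r_j}$ biLipschitzly onto $M_\infty\setminus A_{\infty,r'_j}$ for suitable $r'_j\to 0$, with metric distortion controlled by $L_j$; off the sewn regions this identifies $N_j$ with $N_{\infty,0}$ up to an $e^{L_j}$-factor. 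I would then combine two sources of error exactly as in \cite{BS}: (i) on the unsewn part the distortion $L_j$ contributes $O(L_j\diam(M_\infty))$ to the Gromov--Hausdorff distance and, via the Lakzian--Sormani estimate (\Cref{thm4.6}) applied with $U_j=M_j\setminus A_{j,r_j}$, an intrinsic flat bound whose nontrivial terms have the form $(2\bar h_j+a_j)(\text{volume and boundary-volume of }U_j)+\vol(A'_j)+\vol(T_{r'_j}(A_{\infty,0}))$; and (ii) on the sewn part the bound $\diam(A'_j)\le c\,r_j\to 0$ forces the entire region $A'_j$ to collapse onto $p_0$, the same mechanism by which a tightly sewn region converges to a point in \cite{BDS, BS}. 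Letting $r_j,\eps_j,L_j\to 0$ makes every error term vanish, giving $N^n_j\xrightarrow{GH}N_\infty$ and $N^n_j\xrightarrow{\mathcal{F}}N_{\infty,0}$. For the last claim, if $\mathcal{H}^n(A_{j,0})=0$ then $\vol(A_{j,r_j})\to 0$ and hence $\vol(A'_j)\to 0$; also $\mathcal{H}^n(A_{\infty,0})=0$ by biLipschitz invariance of nullity, so $p_0$ is a null set in $N_\infty$ and $\mu_{N_\infty}$ agrees with $\mu_{M_\infty}$ under the identification above. Pushing $\mu_{N_j}$ forward via $\psi_j$ and sending the vanishing-mass piece $A'_j$ to $p_0$ yields weak convergence of measures in the common space realizing the $GH$-convergence, i.e.\ $N^n_j\xrightarrow{mGH}N_\infty$; this is the $\mathcal{H}^{n-1}(A_0)=0$ argument of \cite{BS} with $n-1$ replaced by $n$.

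The main obstacle is the verification that $d_{N_j}\to d_{N_\infty}$ near the sewn region: one must show that any two points of $N_j$ lying close to $A_{j,0}$ are joined by a path of length $O(r_j)$ through $A'_j$, so that in the limit they become indistinguishable from $p_0$ and the metric genuinely realizes the \emph{pull to a point} operation rather than a small perturbation. This is handled exactly as in \cite{BS}, and the one estimate that makes it work --- the uniform bound $\diam(A'_j)\le c\,r_j$ independent of $n_j$ --- is supplied by \Cref{sewprop2}, which in turn rests on the diameter control in \Cref{propT}. Thus the only genuinely new ingredient beyond \cite{BS} is the scalar-curvature-controlled tunnel; the remainder of the argument is unchanged.
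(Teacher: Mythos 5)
Your proposal is correct and takes essentially the same route as the paper: the paper's proof is a one-line reduction to \cite[Theorem~5.1]{BS} using \Cref{sewprop2} and \Cref{sewthm}, and you have simply unpacked that reduction, correctly identifying the new ingredient (the scalar-curvature-controlled tunnel of \Cref{propT} feeding into \Cref{sewprop2}) and the role of the $\diam(A'_j)\le c\,r_j$ bound.
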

\begin{proof}
The proof follows from the proof of \cite[Theorem 5.1]{BS} while using \Cref{sewprop2} and \Cref{sewthm}
\end{proof}

\section{Intrinsic Flat limit with no geodesics}\label{nogeo}
We are able to generalize the result of Basilio, Kazaras, and Sormani from \cite{BKS} which shows the intrinsic flat limit of Riemannian manifolds need not be geodesically complete. This follows from \Cref{propT} (\nameref{propT}) and the pipe-filling technique \cite[Theorem 3.1]{BKS}. In particular:
\begin{theorem}
There is a sequence of closed, oriented, Riemannian manifolds $(M^n_j,g_j)$, $n\geq 3$, such that the corresponding integral current spaces converge in the intrinsic flat sense to 
\[
M_\infty = \left(N,d_{\bE^{n+1}}, \int_{N}\right),
\]
where $N$ is the round $n$-sphere of curvature $\frac{2\kappa}{n(n-1)}$ and $d_{\bE^{n+1}}$ is the Euclidean distance induced from the standard embedding of $N$  into $\bE^{n+1}$. Moreover, $M_j$ may be chosen so that $R^j$, the scalar curvature of $M_j$, satisfies $R^j\geq 2\kappa\left(1-\frac{1}{10j}\right) >\kappa$. Moreover, $M_\infty$ is not a length space and is not locally geodesically complete.
\end{theorem}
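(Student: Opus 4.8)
The plan is to run the pipe-filling construction of Basilio, Kazaras, and Sormani \cite[Theorem 3.1]{BKS}, but to build the pipes with \Cref{propT} (\nameref{propT}) so that the scalar curvature stays controlled throughout. Start with $N$ the round $n$-sphere of constant sectional curvature $\frac{2\kappa}{n(n-1)}$, standardly embedded in $\bE^{n+1}$; its scalar curvature equals $2\kappa$. Fix a countable dense family $\{(p_k,q_k)\}_{k\geq1}$ of pairs of distinct points of $N$. The manifold $M^n_j$ will be obtained from $N$ by removing $2j$ small balls and gluing in $j$ long thin tunnels, the $k$-th of them ($1\leq k\leq j$) joining $B_g(p_k,2\delta_j)$ to $B_g(q_k,2\delta_j)$; each tunnel alters only a region where the ambient scalar curvature is exactly $2\kappa$, and the tunnels replace pairwise disjoint balls, so their effects on the scalar curvature do not accumulate.

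For stage $j$ I would apply \Cref{propT} a total of $j$ times, with the following choices. Take the tunnel-length parameter for the $k$-th tunnel to be $d_{j,k}:=|p_k-q_k|_{\bE^{n+1}}+\frac{1}{j}$, which is at least the chordal distance of its endpoints; take the curvature tolerance $\eps=\frac{\kappa}{5j}$, so that each tunnel---and hence all of $M^n_j$---has scalar curvature $R^j>2\kappa-\frac{\kappa}{5j}=2\kappa\bigl(1-\frac{1}{10j}\bigr)>\kappa$; and take $\delta_j>0$ small enough that the $2j$ balls are pairwise disjoint (possible for each fixed $j$, the centres being distinct) and that, using item (3) of \Cref{propT} together with \Cref{propvol},
\[
\sum_{k=1}^{j}\Bigl(\vol(T_{j,k})+\vol_N\bigl(B_g(p_k,2\delta_j)\bigr)+\vol_N\bigl(B_g(q_k,2\delta_j)\bigr)\Bigr)<\frac{1}{j}.
\]
Then $\vol(M^n_j)\to\vol(N)$, and since $\diam(T_{j,k})<C(\delta_j+d_{j,k})$ with $\delta_j\leq1$ and $d_{j,k}\leq1+\diam_{\bE^{n+1}}(N)$, the diameters of the $M^n_j$ stay uniformly bounded. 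The $M^n_j$ are closed and oriented since attaching handles preserves both properties.

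For the convergence I would argue exactly as in \cite{BKS}. On the common subregion $U_j:=N\setminus\bigcup_{k\leq j}\bigl(B_g(p_k,2\delta_j)\cup B_g(q_k,2\delta_j)\bigr)$ the metric $g_j$ agrees with the round metric of $N$ by item (2) of \Cref{propT}. Because each tunnel is at least as long as the chord joining its endpoints, each tunnel maps $1$-Lipschitzly onto a thin tube in $\bE^{n+1}$ running along that chord, and these pieces assemble into a $1$-Lipschitz map $M^n_j\to\bE^{n+1}$ that restricts to the standard embedding on $U_j$; hence $d_{M_j}(x,y)\geq|x-y|_{\bE^{n+1}}$ for $x,y\in U_j$. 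On the other hand, shortcutting through the $k$-th tunnel gives $d_{M_j}(p_k,q_k)\leq d_{j,k}+o_{\delta_j}(1)=|p_k-q_k|_{\bE^{n+1}}+\frac{1}{j}+o_{\delta_j}(1)$. Combining these two estimates with the density of $\{(p_k,q_k)\}$ and the triangle inequality shows that $d_{M_j}$ converges uniformly on $N$ to the chordal metric $d_{\bE^{n+1}}$. Feeding the uniform distance convergence, the volume convergence $\vol(M^n_j)\to\vol(N)$, and the uniform diameter bound into the pipe-filling estimate \cite[Theorem 3.1]{BKS}---which bounds the intrinsic flat distance by the volume of the discarded region plus a term governed by the distance distortion on the common region, in the spirit of \Cref{thm4.6}---yields $M^n_j\xrightarrow{\mathcal{F}}\bigl(N,d_{\bE^{n+1}},\int_N\bigr)$.

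Finally, $M_\infty$ is not a length space: every rectifiable curve in $N$ between distinct points $p,q$ has length at least the round distance $d_N(p,q)>|p-q|_{\bE^{n+1}}$, so the length metric induced by $d_{\bE^{n+1}}$ on $N$ is $d_N\neq d_{\bE^{n+1}}$; in particular no two distinct points of $M_\infty$ are joined by a length-minimizing geodesic, so $M_\infty$ is not locally geodesically complete. The one genuinely delicate point---and the reason one leans on \cite[Theorem 3.1]{BKS}---is the uniform collapse $d_{M_j}\to d_{\bE^{n+1}}$: one must verify that filling in shortcuts along a dense set of pairs drives the intrinsic metric down to the chordal metric and no lower, while keeping the successive stages mutually compatible. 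The scalar-curvature bookkeeping through \Cref{propT} is the new ingredient here, but it is routine.
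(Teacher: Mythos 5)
Your proposal is correct and takes essentially the same route as the paper, which simply states that the result follows by running the pipe-filling construction of Basilio--Kazaras--Sormani \cite[Theorem 3.1]{BKS} with the tunnels of \Cref{propT} (\nameref{propT}) substituted for the old Gromov--Lawson tunnels. You have reconstructed the details the paper leaves implicit—sizing the tunnel lengths to the chordal distances, choosing the curvature tolerance per tunnel to be $O(1/j)$, and shrinking the balls so the volume defect vanishes—exactly in the spirit of what the cited BKS argument requires.
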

\printbibliography
\end{document}